\documentclass[11pt,reqno]{amsart}

\usepackage{enumerate}
\usepackage{amsmath, amssymb, amsthm}
\usepackage{mathrsfs}
\usepackage{esint}
\usepackage{mathtools}
\usepackage{hyperref}
\hypersetup{hidelinks}
\usepackage{bm}
\usepackage{xcolor}
\definecolor{unresolvedred}{RGB}{190,0,0}

\newtheorem{thm}{Theorem}[section]
\newtheorem{corollary}[thm]{Corollary}
\newtheorem{lem}[thm]{Lemma}

\theoremstyle{definition}
\newtheorem{defn}[thm]{Definition}
\newtheorem{example}[thm]{Example}
\newtheorem{assumption}[thm]{Assumption}
\theoremstyle{remark}
\newtheorem{rem}[thm]{Remark}

\newcommand\bN{\mathbb{N}}

\newcommand\bR{\mathbb{R}}

\newcommand\cA{\mathcal{A}}

\newcommand\cF{\mathcal{F}}

\newcommand\cU{\mathcal{U}}

\DeclareMathOperator*{\esssup}{ess\,sup}
\DeclareMathOperator*{\essinf}{ess\,inf}

\numberwithin{equation}{section}
\newcommand{\mysection}[1]{\section{#1}}

\begin{document}

\title[A weighted $\mathrm{L}_2$-theory for Cauchy problems with non-integrable P-D operators]
{Well-Posedness for Cauchy Problems with Singular Time-Measurable Pseudo-Differential Operators in Quasi-decreasing Weighted $\mathrm{L}_2$-Spaces}

\author[J.-H. Choi]{Jae-Hwan Choi}
\address[J.-H. Choi]{School of Mathematics, Korea Institute for Advanced Study, 85 Hoegiro, Dongdaemun-gu, Seoul 02455, Republic of Korea}
\email{jhchoi@kias.re.kr}

\author[I. Kim]{Ildoo Kim}
\address[I. Kim]{Department of Mathematics, Korea University, 145 Anam-ro, Seongbuk-gu, Seoul, 02841, Republic of Korea}
\email{waldoo@korea.ac.kr}

\thanks{J.-H. Choi has been supported by a KIAS Individual Grant (MG102701) at Korea Institute for Advanced Study.
I. Kim has been supported by the National Research Foundation of Korea (NRF) grant funded by the Korean government (MSIT) (RS-2025-16065358).}

\subjclass[2020]{35S05, 35B65, 47G30}

\keywords{Singular Cauchy Problem, Singular Pseudo-Differential Operator, Quasi-decreasing Weight, Fractional Laplacian of negative order}

\begin{abstract}
This study examines Cauchy problems governed by highly singular, time-measurable pseudo-differential operators (singular measurable families of Fourier multipliers). 
We show that the symbols of these operators can exhibit arbitrary blow-up behavior. 
In particular, we prove the existence and uniqueness of solutions even when the symbols grow super-exponentially in time and frequency. 
As a concrete application, we solve evolutionary equations driven by fractional Laplacians of any negative order. 
Additionally, we establish unique strong solutions under the sole condition that the symbol is locally integrable in frequency, even in the presence of severe blow-up at the initial time.
\end{abstract}
\maketitle

\mysection{Introduction}

Pseudo-differential operators constitute an indispensable framework in the modern theory of partial differential equations (PDEs), emerging naturally as a profound generalization of classical differential operators when analyzing equations with variable coefficients. 
While classical differential operators are strictly local, pseudo-differential operators are fundamentally characterized by their global, non-local nature. 
This inherent non-locality means that standard localization techniques and classical perturbation methods—which heavily rely on freezing coefficients or analyzing immediate neighborhoods—are rendered ineffective. 
Consequently, the presence of these operators not only introduces significant analytical complexities but also demands the development of far more sophisticated mathematical machinery to overcome the limitations of classical PDE theories (\textit{cf.} \cite{E1,H1,K1,T1}).

The significance of this work is deeply rooted in its connections to probability theory. 
Specifically, the connection between pseudo-differential operators and the generators of stochastic processes is most clearly seen in the study of Feller processes and Lévy processes. 
By Courrège's theorem, the infinitesimal generator of a Feller process—provided its domain contains the space of smooth functions with compact support, $C_c^\infty(\mathbb{R}^n)$—can be uniquely represented as a pseudo-differential operator. 
The symbol of this operator, typically a continuous negative definite function, corresponds directly to the characteristic exponent of the associated stochastic process via the Lévy-Khintchine formula. 
This linkage is of fundamental importance because it allows the powerful, rigorous machinery of harmonic analysis, symbol calculus, and operator theory to be seamlessly applied to probability theory. 
Consequently, researchers can analyze the regularity of transition densities, the pathwise behavior of jump processes, and the existence and uniqueness of solutions to complex partial differential equations simply by studying the analytic and algebraic properties of their corresponding pseudo-differential symbols.
For instance, see \cite{A1,BGT1,J1,Kol1} for details.

Historically, a substantial body of literature has been dedicated to relaxing the regularity conditions imposed on the symbols of pseudo-differential operators. 
This drive stems from the inherent need to analyze partial differential equations modeling highly irregular or singular phenomena. In the standard framework, establishing well-posedness for general data within $\mathrm{L}_p$-spaces heavily relies on the continuity—or at least the closedness—of these operators. 
This topological requirement has been widely considered indispensable, as classical existence and uniqueness proofs fundamentally depend on density arguments, where approximations by smooth functions are passed to the limit. 
Indeed, if one strictly seeks classical strong solutions, operator continuity remains virtually an indispensable prerequisite.
However, the breakdown of standard operator boundedness does not preclude the existence of meaningful solutions. 
Modern PDE theory recognizes a diverse spectrum of solvability regimes beyond the rigid confines of strong solutions. 
More importantly, even in the complete absence of operator continuity, one can still secure both the existence and the uniqueness of solutions by carefully constructing and identifying appropriate, specialized function spaces that naturally accommodate the given operators.
Motivated by this perspective, we introduce a novel concept of solvability that fundamentally departs from conventional frameworks. 
We propose a new class of solutions which is strictly weaker than standard distributional weak solutions defined via pairing with test functions. 
Instead, our framework characterizes the solution through a rigorous approximation procedure directly within the frequency domain (see Definition \ref{spectral-limit soluion}), thereby successfully bypassing the traditional continuity bottlenecks.

To rigorously formulate our mathematical framework, let $\psi(t,\xi)$ be a  jointly Lebesgue-measurable complex-valued function on $(0,T) \times \mathbb{R}^d$, and let $w$ be a positive function on $(0,T)$.
In the present work, we establish the existence and uniqueness of solutions in $\mathrm{L}_2\left((0,T) \times \mathbb{R}^d, \frac{w(t)}{t^2}\mathrm{d}t \mathrm{dx}\right)$ to the following Cauchy problem, together with maximal regularity estimates for those solutions:
\begin{align}
								\label{ab eqn}
\begin{cases}
\partial_tu(t,x)=\psi(t,-\mathrm{i}\nabla)u(t,x)+f(t,x),\quad &(t,x)\in(0,T)\times\mathbb{R}^d,\\
u(0,x)=0,\quad & x\in\mathbb{R}^d,
\end{cases}
\end{align}
Here, $\psi(t,-\mathrm{i}\nabla)$ denotes a time-measurable pseudo-differential operator (equivalently, a measurable family of Fourier multipliers). It is governed by the complex-valued symbol $\psi(t,\xi)$ and operates via the Fourier transform as follows:
\begin{align}
								\label{ab op}
\psi(t,-\mathrm{i}\nabla)u(t,x):=\cF_\xi^{-1}\left[\psi(t,\xi)\cF_y[u(t,y)](\xi)\right](x).
\end{align}
Crucially, our framework relies solely on a degenerate ellipticity condition for the symbol. 
We deliberately impose no regularity or boundedness constraints on $\psi(t,\xi)$ with respect to either the time variable $t$ or the frequency variable $\xi$. 
Remarkably, our approach remains entirely valid even without the standard assumption of local integrability for the symbol. 
Furthermore, the weight $w$ under consideration strictly falls outside the classical Muckenhoupt $A_2(\bR^d)$ class.
At first glance, one might assume that the analysis is simplified by restricting the problem to an $\mathrm{L}_2$-setting. 
Indeed, classical multiplier theory dictates that a Fourier multiplier is bounded on $\mathrm{L}_2$ if and only if its corresponding symbol is uniformly bounded. 
However, as previously emphasized, our framework completely abandons the requirement of operator continuity. Our operators are not even densely defined in $\mathrm{L}_2$.
Consequently, standard arguments based on symbol boundedness break down, making the problem fundamentally non-trivial despite the $\mathrm{L}_2$-framework.
Moreover, the intricate singularities of the symbols preclude the application of many elegant functional analytic tools typically available in Hilbert spaces. 
Instead, we must carry out all computations and arguments using elementary methods, relying heavily on Plancherel's theorem, Fubini's theorem, the generalized Minkowski inequality, and the Cauchy--Bunyakovsky--Schwarz inequality. 
Ultimately, while the mathematical machinery we employ is strictly elementary, it yields remarkably powerful and far-reaching results.

Our research group has extensively investigated this class of evolution equations in recent years (see, for instance, \cite{CJH KID 2023,CJH LJB KID 2023,CJH 2024,KID SBL KHK 2015, KID SBL KHK 2016,KID KHK 2016,KID 2018,KKK0,KKK1}). 
To highlight the active nature of this topic within mathematical analysis, we also direct the reader to selected contributions from other researchers \cite{DHP1,AB1,MP1,Z1,DK1,NVW1,BNVW1}. 
Notably, these prior investigations were primarily developed within an $L_p$-framework. 
Consequently, relying on principles akin to the Mikhlin multiplier theorem, they inherently necessitate strong regularity assumptions on the symbols. 
Moreover, treating these operators via an integral approach strictly demands some level of local integrability. 
To overcome these limitations, the present work specifically targets the $p=2$ case. Focusing on this special exponent allows us to achieve our primary objective: completely dispensing with both the regularity and integrability requirements on the symbols.

While standard differential operators (such as the Laplacian) exhibit smooth polynomial growth exclusively as $\vert{}\xi\vert{} \to \infty$—yielding stationary domains homeomorphic to classical Sobolev spaces—our symbols $\psi(t,\xi)$ are allowed to undergo localized spatio-temporal blow-ups at finite times or frequencies. 
From an operator-theoretic perspective, such localized singularities trigger a severe topological collapse of the operator domains $D(\psi(t,-i\nabla))$ as $t$ approaches singular times, thereby destroying the uniform sectoriality, resolvent bounds, and domain stability required by Kato's evolution family framework. 
Consequently, these operators cannot be continuously mapped within any classical Sobolev scale $H^s$, rendering traditional variational and semigroup methodologies fundamentally inapplicable.
This severe breakdown necessitates a novel theoretical structure capable of handling such severe singularities.

To illustrate the efficacy of our proposed framework, even without assuming local integrability of the symbols, consider the following concrete toy model.
Consider the highly singular symbol
$$
\psi(t,\xi)=-t^\alpha \exp\left( t^\beta \right) |\xi|^{\gamma} \exp\left( |\xi|^{\delta} \right) 
$$
and the corresponding Cauchy problem:
\begin{align}
									\label{ab eqn 2}
\begin{cases}
\partial_tu(t,x)= -t^\alpha \exp\left(  t^\beta \right) (-\Delta)^{\gamma/2} \exp\left( (-\Delta)^{\delta/2} \right)  u(t,x)+f(t,x),\quad &(t,x)\in(0,T)\times\mathbb{R}^d,\\
u(0,x)=0,\quad & x\in\mathbb{R}^d.
\end{cases}
\end{align}
Remarkably, our results guarantee that this equation can be uniquely solved in the aforementioned approximating sense for any choice of real parameters $\alpha,\beta,\gamma, \delta \in \mathbb{R}$ (see Theorem \ref{main appl 1}). 
Furthermore, we can even establish the existence of a unique strong solution to \eqref{ab eqn 2} under the remarkably broad conditions $\alpha \in (-1,0]$, $\beta=0$ $\gamma \in (-d/2,\infty)$, and $\delta \in [0,\infty)$ (Theorem \ref{main appl 3}).

Although the toy model we presented features irregularities at just one or two specific points to demonstrate blow-up behavior, our broader framework is not bound by these limitations. 
In fact, a key advantage of our methodology is its capacity to handle symbols possessing arbitrary levels of irregularity and blow-up. 
Having laid out this motivating context, we now conclude the motivating discussion and proceed to the rigorous theory.

We emphasize that the restriction to homogeneous initial data $u(0,x)=0$ in \eqref{ab eqn} is not merely a technical simplification, but a structural necessity dictated by the extreme initial singularities of the symbols. When $\psi(t,\xi)$ fails to be integrable near $t=0$, the heuristic propagator $\exp\left(\int_0^t \psi(r,\xi)\,dr\right)$ becomes ill-defined even in the sense of tempered distributions. In such hyper-singular regimes, standard trace operators at $t=0$ lose their classical meaning, rendering non-zero initial data ill-posed without imposing additional temporal integrability constraints on the symbol. For more details, see Remark \ref{zero initial reason} below.

The remainder of this paper is structured as follows: Section 2 introduces the function spaces and three distinct notions of solutions. Section 3 contains our main well-posedness theorems, while Section 4 illustrates these results through concrete fractional toy models. 
Sections 5 through 7 develop the necessary a priori estimates, uniqueness criteria, and dense subclasses. 
Finally, Section \ref{pf main thm} provides the proofs of the main theorems; for logical cohesion, Theorem \ref{main thm 2} is established first, followed sequentially by Theorems \ref{main thm} and \ref{main thm 3}.

\vspace{2mm}
We close this section by summarizing the standard notation that will be used throughout the remainder of the paper.

\begin{itemize}
\item We denote the sets of natural numbers and integers by $\mathbb{N}$ and $\mathbb{Z}$, respectively. 
The $d$-dimensional Euclidean space is represented by $\mathbb{R}^d$, with elements written as $x = (x^1, \ldots, x^d)$. 
For a sufficiently smooth function $u(x)$ and multi-indices $\alpha = (\alpha_1, \ldots, \alpha_d)$ with non-negative integer components, we denote the partial derivatives by $u_{x^i} = \frac{\partial u}{\partial x^i} = D_i u$ and $D^\alpha u = D_1^{\alpha_1} \cdots D_d^{\alpha_d} u$. 
The spatial gradient is written as $u_x = (u_{x^1}, \ldots, u_{x^d})$.

\item For a given domain $\mathcal{O} \subset \mathbb{R}^d$ and a normed space $F$, $C(\mathcal{O}; F)$ is the space of all $F$-valued continuous functions on $\mathcal{O}$. 
This space is equipped with the supremum norm $\|u\|_{C} := \sup_{x \in \mathcal{O}} \|u(x)\|_F < \infty$.

\item Given a measure space $(X, \mathcal{M}, \mu)$, a normed space $F$, and $p \in [1, \infty)$, $\mathrm{L}_p(X, \mathcal{M}, \mu; F)$ denotes the space of all $F$-valued, $\mathcal{M}^\mu$-measurable functions $u$ satisfying
$$
\|u\|_{\mathrm{L}_p(X, \mathcal{M}, \mu; F)} := \left( \int_X \|u(x)\|_F^p \, \mu(\mathrm{d}x) \right)^{1/p} < \infty.
$$
Here, $\mathcal{M}^\mu$ represents the $\mu$-completion of the $\sigma$-algebra $\mathcal{M}$. 
When $p = \infty$, the space consists of functions with a finite essential supremum:
$$
\|u\|_{L_\infty(X, \mathcal{M}, \mu; F)} := \inf \left\{ \nu \geq 0 : \mu(\{ x : \|u(x)\|_F > \nu \}) = 0 \right\} < \infty.
$$
We often omit the measure space and $\sigma$-algebra from the notation when they are evident from context. 
By default, we work with the completion of the Borel $\sigma$-algebra associated with the underlying topology.

\item In $\mathbb{R}^d$, we primarily rely on Lebesgue measurable sets (the completion of the Borel $\sigma$-algebra under the standard Euclidean metric). The Lebesgue measure of a measurable set $\mathcal{O} \subset \mathbb{R}^d$ is written as $|\mathcal{O}|$. Within the context of Euclidean spaces, we use ``measurable'' to implicitly mean ``Lebesgue-measurable.''

\item Suppose $\mathcal{O} \subset \mathbb{R}^d$ and $\mathcal{N} \subset \mathcal{O}$, where $\mathcal{N}$ is a null set (i.e., contained within a Borel set of Lebesgue measure zero). A function $f$ defined on $\mathcal{O} \setminus \mathcal{N}$ is considered measurable and defined almost everywhere on $\mathcal{O}$ if it coincides with some globally measurable function $g$ on $\mathcal{O} \setminus \mathcal{N}$.

\item We use the notation $\alpha \lesssim \beta$ to indicate that $\alpha \leq N \beta$ for some generic constant $N > 0$. 
The exact value of $N$ may change from line to line. 
Subscripts, as in $\alpha \lesssim_{a,b,c} \beta$, mean that the implicit constant $N$ depends solely on the parameters $a$, $b$, and $c$.
The precise dependencies of these constants are always explicitly stated in our theorems, lemmas, and corollaries.

\item For a measurable function $f$ on $\mathbb{R}^d$, its $d$-dimensional Fourier transform and inverse Fourier transform are defined as
$$
\mathcal{F}[f](\xi) := \frac{1}{(2\pi)^{d/2}}\int_{\mathbb{R}^{d}} \mathrm{e}^{-i\xi \cdot x} f(x) \mathrm{d}x \quad \text{and} \quad \mathcal{F}^{-1}[f](x) := \frac{1}{(2\pi)^{d/2}}\int_{\mathbb{R}^{d}} \mathrm{e}^{ ix \cdot \xi} f(\xi) \mathrm{d}\xi.
$$
For a space-time function $f(t,x)$ defined on $(0,T)\times \mathbb{R}^d$, we denote the spatial Fourier transform by
$$
\mathcal{F}_x[f](\xi) := \mathcal{F}[f(t,\cdot)](\xi) = \frac{1}{(2\pi)^{d/2}}\int_{\mathbb{R}^{d}} \mathrm{e}^{ -i\xi \cdot x} f(t,x) \mathrm{d}x,
$$
and the spatial inverse Fourier transform by
$$
\mathcal{F}_\xi^{-1}[f](x) := \mathcal{F}^{-1}[f(t,\cdot)](x) = \frac{1}{(2\pi)^{d/2}}\int_{\mathbb{R}^{d}} e^{ ix \cdot \xi} f(t,\xi) \mathrm{d}\xi.
$$
By the Riesz-Thorin and Plancherel theorems, these transforms extend naturally to $L_1(\mathbb{R}^d) + \mathrm{L}_2(\mathbb{R}^d)$, and we retain the symbols $\mathcal{F}$ and $\mathcal{F}^{-1}$ for these extensions.

\item We let $\mathrm{C}_c^{\infty}(\mathbb{R}^d)$ denote the space of compactly supported smooth functions, $\mathcal{S}(\mathbb{R}^d)$ the Schwartz space, and $\mathcal{S}'(\mathbb{R}^d)$ the space of tempered distributions. The set of Schwartz functions whose Fourier transforms are compactly supported is denoted by $\mathcal{F}^{-1}\mathrm{C}_c^\infty(\mathbb{R}^d)$; explicitly,
$$
f \in \mathcal{F}^{-1}\mathrm{C}_c^\infty(\mathbb{R}^d) \iff f \in \mathcal{S} \text{ and } \mathcal{F}[f] \in \mathrm{C}_c^\infty(\mathbb{R}^d).
$$

\item For real numbers $a,b \in \mathbb{R}$, we use the shorthand
$$
a \wedge b := \min\{a,b\} \quad \text{and} \quad a \vee b := \max\{a,b\}.
$$
For a complex number $z \in \mathbb{C}$, its real part, imaginary part, and complex conjugate are denoted by $\Re[z]$, $\Im[z]$, and $\bar{z}$, respectively.

\item For a complex-valued function $u$ defined on a subset $E$ of Euclidean space, we write $u \equiv 1$ to indicate that $u(t) = 1$ for all $t \in E$.

\item For topological vector spaces $X$ and $Y$, the notation $X \hookrightarrow Y$ means that $X$ is continuously embedded in $Y$.
\end{itemize}

\mysection{Setting and solutions}
											\label{main section}

In this paper, $T \in (0,\infty]$ denotes the terminal time and $d \in \mathbb{N}$ represents the spatial dimension. 
Because $T$ can be infinite, our evolution equations are formulated on the open time interval $(0,T)$. Unless otherwise specified, all functions are assumed to be complex-valued. 
The sole exception concerns weight functions, which are strictly real-valued and positive almost everywhere on $(0,T)$.
For a complex-valued symbol $\psi(t,\xi)$ defined on $(0,T) \times \mathbb{R}^d$, we define the associated time-measurable pseudo-differential operator (or measurable family of Fourier multipliers), $\psi(t,-\mathrm{i}\nabla)$, by
\begin{align}
									\label{20230522 01}
\psi(t,-\mathrm{i}\nabla)u(t,x):=\cF_\xi^{-1}\left[\psi(t,\xi)\cF_y[u(t,y)](\xi)\right](x).
\end{align}
While interpolation allows the Fourier transform and its inverse to be defined in a strong sense on $\mathrm{L}_p(\mathbb{R}^d)$ for any $p \in [1,2]$, we strictly confine our framework to $L_1(\mathbb{R}^d)$ or $L_2(\mathbb{R}^d)$. 
It should be emphasized that distribution-valued functions are entirely absent from this work. 
Consequently, whenever the Fourier transform of a function $f$ is taken, both $f$ and its transform are understood as standard complex-valued functions, with the underlying assumption that $f \in L_1(\mathbb{R}^d) + L_2(\mathbb{R}^d)$.
Furthermore, although the symbols $\psi$ handled in our main results are typically unbounded, the associated operators can still be interpreted in a strong sense provided that $\mathcal{F}[u]$ exhibits sufficient decay.

Before presenting our main theorem, we review the notation for mixed-norm spaces. 
Let $p \in [1,\infty]$ and $q \in [1,\infty)$ be given exponents, and let $W$ be a non-negative weight function defined almost everywhere on $(0,T) \times \mathbb{R}^d$.
To avoid cumbersome notation, we write
$$
\mathrm{L}_{p,q}\left( (0,T) \times \bR^d,   W(t,x) \mathrm{d}t   \mathrm{d}x \right)
$$ 
as a convenient shorthand for the space $\mathrm{L}_{p}\left( (0,T), \mathrm{d}t ; L_q \left(\mathbb{R}^d, W(t,x)\mathrm{d}x \right)\right)$. 
We require functions in this space to be jointly measurable because iterated measurability alone does not guarantee joint measurability.
Explicitly, this space comprises all jointly measurable functions $u$ defined a.e. on $(0,T) \times \mathbb{R}^d$ for which the mixed norm is finite:
\[
\|u\|_{\mathrm{L}_{p,q}\left( (0,T) \times \mathbb{R}^d, W(t,x)\mathrm{d}t \mathrm{d}x \right)} := \left\| \left(\int_{\mathbb{R}^d} |u(\cdot,x)|^q W(\cdot,x) \mathrm{d}x \right)^{1/q} \right\|_{\mathrm{L}_p(0,T)} < \infty.
\]
Written out in full detail, $u \in \mathrm{L}_{p,q}\left( (0,T) \times \mathbb{R}^d, W(t,x)\mathrm{d}t \mathrm{d}x \right)$ if and only if
$$
\|u\|_{ \mathrm{L}_{p,q} } := 
\begin{cases}
\left(\int_{0}^{T} \left(\int_{\mathbb{R}^d} |u(t,x)|^q W(t,x) \mathrm{d}x \right)^{p/q} \mathrm{d}t \right)^{1/p} < \infty, & \text{for } p \in [1,\infty), \\
\mathop{\mathrm{ess\,sup}}_{t \in (0,T)} \left(\int_{\mathbb{R}^d} |u(t,x)|^q W(t,x) \mathrm{d}x \right)^{1/q} < \infty, & \text{for } p = \infty.
\end{cases}
$$

\begin{rem}
In our analysis, we employ weighted $\mathrm{L}_p$-spaces equipped with the weight function $|\psi(t,\xi)|$. 
Because we allow for highly general symbols, this weight may vanish on a subset of positive Lebesgue measure. 
In standard measure theory, this degeneracy introduces a critical issue: equivalence classes formed with respect to the weighted measure allow functions to differ arbitrarily on the zero-weight set. 
This ambiguity fundamentally conflicts with the standard notion of uniqueness for PDE solutions.
To circumvent this problem, we treat these weighted spaces as topological vector spaces endowed with seminorms rather than true norms. 
We strictly define our equivalence classes based on almost-everywhere equality with respect to the standard Lebesgue measure, independent of where the weight vanishes. 
\end{rem}

\begin{defn}[Weighted local $\mathrm{L}_{p,2}$-spaces]
											\label{weight local space}
Let $p \in [1,\infty]$.
\begin{itemize}
\item We define the local space $\mathrm{L}_{p,2,loc}\left( (0,T) \times \mathbb{R}^d, W(t,x) \mathrm{d}t \mathrm{d}x \right)$ as the set of functions $u$ satisfying
$$
u \in \mathrm{L}_{p,2}\left( (0,T') \times \mathbb{R}^d, W(t,x) \mathrm{d}t \mathrm{d}x \right) \quad \text{for all } T' \in (0,T).
$$ 
Naturally, this space forms a topological vector space equipped with a family of seminorms indexed by $T'$. 
In particular, we say a sequence $\{u_n\}$ converges to $u$ in $\mathrm{L}_{p,2,loc}\left((0,T) \times \mathbb{R}^d, W(t,x)\mathrm{d}t\mathrm{d}x \right)$ if, for every $T' \in (0,T)$, it converges with respect to the corresponding seminorm. 
For $p < \infty$, this means:
$$
\lim_{n \to \infty}\int_0^{T'} \left(\int_{\mathbb{R}^d}|u(t,x)-u_n(t,x)|^2 W(t,x)\mathrm{d}x \right)^{p/2}\mathrm{d}t=0.
$$
Throughout this paper, we predominantly focus on the case where $p=2$, adopting the abbreviated notation $\mathrm{L}_{2,loc}\left( (0,T) \times \mathbb{R}^d, W(t,x) \mathrm{d}t \mathrm{d}x \right)$ in place of $\mathrm{L}_{2,2,loc}\left( (0,T) \times \mathbb{R}^d, W(t,x) \mathrm{d}t \mathrm{d}x \right)$.

\item Next, we introduce a local function class defined by the properties of its spatial Fourier transform. We write 
$$
u \in \mathcal{F}_x^{-1}\mathrm{L}_{p,2,loc}\left( (0,T) \times \mathbb{R}^d,  W(t,\xi)\mathrm{d}t \mathrm{d}\xi \right)
$$ 
if and only if, for every $T' \in (0,T)$, one has $u(t,\cdot)\in L_2(\mathbb{R}^d)$ for almost every $t\in(0,T')$, and the weighted mixed-norm of its Fourier transform is finite. Explicitly, this second condition requires:
$$
\|\mathcal{F}_x[u]\|_{ \mathrm{L}_{p,2}\left( (0,T') \times \mathbb{R}^d, W(t,\xi) \mathrm{d}t \mathrm{d}\xi \right)} := 
\begin{cases}
\left(\int_{0}^{T'} \left(\int_{\mathbb{R}^d} |\mathcal{F}_x[u(t,x)](\xi)|^2 W(t,\xi) \mathrm{d}\xi \right)^{p/2} \mathrm{d}t \right)^{1/p} < \infty, & \text{for } p \in [1,\infty), \\
\mathop{\mathrm{ess\,sup}}_{t \in (0,T')} \left(\int_{\mathbb{R}^d} |\mathcal{F}_x[u(t,x)](\xi)|^2 W(t,\xi) \mathrm{d}\xi \right)^{1/2} < \infty, & \text{for } p = \infty.
\end{cases}
$$
\end{itemize}
Following standard conventions, we omit the measure notation $W(t,\xi) \mathrm{d}t \mathrm{d}\xi$ whenever the weight is trivial (i.e., $W(t,\xi) \equiv 1$).

\end{defn}

\begin{rem}
Before proceeding, we briefly reiterate our convention concerning the Fourier transform. 
To bypass the technical difficulties associated with tempered distribution-valued functions, we strictly regard all functions and their Fourier transforms as standard complex-valued functions. 
Consequently, we restrict the integrability exponent for the frequency variable to $2$, which is reflected in our use of the space $\mathcal{F}^{-1}\mathrm{L}_{p,2,loc}\left( (0,T) \times \mathbb{R}^d, W(t,\xi)\mathrm{d}t \mathrm{d}\xi \right)$.
While it is possible to extend these definitions to a general exponent $q \in [1,\infty]$ by employing classes of linear functionals broader than tempered distributions (see, e.g., \cite{CK 2024, CK 2024-2}), such generality is unnecessary for the scope of this paper. 
We focus exclusively on the $q=2$ case because Plancherel's theorem ensures that the Fourier transform is defined without any ambiguity. 
In particular, when the weight $W$ depends solely on the time variable $t$, Plancherel's theorem yields the direct identity:
$$
\mathcal{F}_x^{-1}\mathrm{L}_{2,loc}\left( (0,T) \times \mathbb{R}^d, W(t)\mathrm{d}t \mathrm{d}x \right) 
=\mathrm{L}_{2,2,loc}\left( (0,T) \times \mathbb{R}^d, W(t)\mathrm{d}t \mathrm{d}x \right)
=\mathrm{L}_{2,loc}\left( (0,T) \times \mathbb{R}^d, W(t)\mathrm{d}t \mathrm{d}x \right).
$$
\end{rem}

\begin{defn}[Strong Solution]
							\label{strong solution}
Let $u \in \mathrm{L}_{2,loc}\left((0,T) \times \bR^d  \right)$.
We say that $u$ is a \emph{strong solution} to equation \eqref{ab eqn} if
\begin{align}
										\label{20230213 30}
u(t,x) = \int_0^t \left(\psi(s,-\mathrm{i}\nabla)u(s,x)+f(s,x)   \right) \mathrm{d}s \quad \text{a.e.}~ (t,x) \in (0,T) \times \bR^d.
\end{align}
\end{defn}

\begin{rem}
Alternatively, one could formulate a strong solution in the frequency domain as follows:
$$
\mathcal{F}[u(t,\cdot)](\xi) = \int_0^t \left(\psi(s,\xi)\mathcal{F}[u(s,\cdot)](\xi)+\mathcal{F}[f(s,\cdot)](\xi) \right) \mathrm{d}s \quad \text{a.e. } (t,\xi) \in (0,T) \times \mathbb{R}^d.
$$
If we formally interchange the time integral and the Fourier transform, Plancherel's theorem allows us to treat the two formulations in essentially the same way. 
\end{rem}

\begin{rem}
The space $\mathrm{L}_{2,loc}((0,T) \times \mathbb{R}^d)$ is not the optimal setting for defining strong solutions, primarily because the integral term $\int_0^t \psi(s,-\mathrm{i}\nabla)u(s,x) \mathrm{d}s$ is not necessarily well-defined for an arbitrary $u \in \mathrm{L}_{2,loc}((0,T) \times \mathbb{R}^d)$. 
However, we can readily deduce the necessary properties of strong solutions from Definition \ref{strong solution}. 
If $u$ is a strong solution, it follows from \eqref{20230522 01} and \eqref{20230213 30} that for almost every $t \in (0,T)$, the mapping
$$
\xi \mapsto \psi(t,\xi)\mathcal{F}[u(t,\cdot)](\xi)
$$
belongs to $L_1(\mathbb{R}^d) + L_2(\mathbb{R}^d)$. 
Furthermore, we require that
$$
\psi(\cdot,-\mathrm{i}\nabla)u(\cdot,x) \in \mathrm{L}_{1,loc}\left((0,T)\right) \quad \text{for almost every } x \in \mathbb{R}^d.
$$
Because these conditions are generally not met by all functions within $\mathrm{L}_{2,loc}((0,T) \times \mathbb{R}^d)$, we must identify a more suitable subspace to properly classify our strong solutions. 
A natural and optimal choice for this is the space $\mathcal{F}_x^{-1}\mathrm{L}_{1,2,loc}\left( (0,T) \times \mathbb{R}^d, |\psi(t,\xi)|^2 \mathrm{d}t \mathrm{d}\xi \right)$, which was introduced in Definition \ref{weight local space} with $W(t,\xi) =|\psi(t,\xi)|^2$. 
\end{rem}

\begin{rem}
									\label{continuous modification}
By the Lebesgue differentiation theorem, 
we can deduce from \eqref{20230213 30} that
$$
\partial_t u(t,x) = \psi(t,-\mathrm{i}\nabla)u(t,x) + f(t,x) 
$$ 
holds for almost every $(t,x) \in (0,T) \times \mathbb{R}^d$.
Furthermore, if we introduce the function
$$\tilde{u}(t,x) = \int_0^t \left( \psi(s,-\mathrm{i}\nabla)u(s,x) + f(s,x) \right) \mathrm{d}s,
$$
it naturally follows that $u(t,x)$ and $\tilde{u}(t,x)$ coincide almost everywhere on $(0,T) \times \mathbb{R}^d$. 
It is also apparent that $\tilde{u}(t,x)$ is continuous in $t$ for any given $x$. 
As a result, any strong solution $u$ to \eqref{ab eqn} possesses a modification that is continuous everywhere in time. Therefore, without loss of generality, we can assume that \eqref{20230213 30} holds for all $t \in (0,T)$ for almost every spatial point $x \in \mathbb{R}^d$.

\end{rem}

\begin{defn}[Spatial Fourier Weak Solution]
									\label{space weak solution}
A measurable function $u \in \mathrm{L}_{2,loc}((0,T) \times \mathbb{R}^d)$ is termed a  \emph{spatial Fourier weak solution} to equation \eqref{ab eqn} provided that, for all test functions $\varphi \in \mathcal{F}^{-1}\mathrm{C}_c^\infty(\mathbb{R}^d)$, the following identity holds for almost every $t \in (0,T)$:
\begin{align}
										\label{weak formulation}
\left(u(t,\cdot),\varphi\right)_{\mathrm{L}_2(\bR^d)} =  \int_0^t \left(u(s,\cdot) , \overline \psi(s,-\mathrm{i}\nabla)\varphi \right)_{\mathrm{L}_2(\bR^d)} \mathrm{d}s 
+ \int_0^t \left( f(s,\cdot),\varphi\right)_{\mathrm{L}_2(\bR^d)} \mathrm{d}s
\quad a.e.~t\in (0,T).
\end{align}
Here, $(\cdot,\cdot)_{\mathrm{L}_2(\mathbb{R}^d)}$ represents the standard $\mathrm{L}_2(\mathbb{R}^d)$-inner product, defined as
$$
\left(u(t,\cdot),\varphi\right)_{\mathrm{L}_2(\mathbb{R}^d)} = \int_{\mathbb{R}^d} u(t,x) \overline{\varphi(x)} \mathrm{d}x.
$$ 
Furthermore, the conjugated operator is defined via the Fourier transform as
$$
\overline\psi(s,-\mathrm{i}\nabla)\varphi(x)= \mathcal{F}^{-1}\left[\overline \psi(s,\cdot) \mathcal{F}[\varphi] \right](x), 
$$ 
where $\overline\psi(s,\xi)$ is the complex conjugate of the symbol $\psi(s,\xi)$.
\end{defn}

\begin{rem}
Recall that $\mathcal{F}^{-1}\mathrm{C}_c^\infty(\mathbb{R}^d)$ represents a subspace of the Schwartz class $\mathcal{S}(\mathbb{R}^d)$ consisting of functions whose Fourier transforms are infinitely differentiable with compact support; that is,
$$
f \in \mathcal{F}^{-1}\mathrm{C}_c^\infty(\mathbb{R}^d) \iff f \in \mathcal{S}(\mathbb{R}^d) \text{ and } \mathcal{F}[f] \in \mathrm{C}_c^\infty(\mathbb{R}^d).
$$
We select this space as our class of test functions to relax the upper bound requirements on the symbol. 
Because $\mathcal{F}^{-1}\mathrm{C}_c^\infty(\mathbb{R}^d)$ is dense in $\mathrm{L}_p(\mathbb{R}^d)$ for all $p \in (1,\infty)$, this choice still effectively bridges the gap between weak and strong solutions. 
Furthermore, utilizing this test function class enables us to solve evolution equations lacking ellipticity. 
For a comprehensive discussion on the existence and uniqueness of these solutions to \eqref{ab eqn}, we refer the reader to our previous work \cite{CK 2024, CK 2024-2}. 
Finally, we note a slight shift in terminology: while we previously referred to these as ``Fourier-space weak solutions" in \cite{CK 2024, CK 2024-2}, we now adopt the term ``spatial Fourier weak solutions."

\end{rem}

\begin{rem}
									\label{strong weak rem}
Formally, integrating both sides of \eqref{20230213 30} demonstrates that any strong solution $u$ to \eqref{ab eqn} is inherently a spatial Fourier weak solution as well. 
However, although Plancherel’s theorem intuitively suggests that a strong solution should naturally satisfy the weak formulation, the commutation of the temporal integral $\int_0^t$ and the Fourier transform is not automatic. 
Justifying this interchange requires applying Fubini’s theorem or relying on the $L_2$-continuity of the Fourier transform, which consequently imposes specific integrability constraints on the symbol $\psi(t,\xi)$. 
The rigorous theoretical foundation for this commutation is provided by Hille’s theorem (\textit{cf.} \cite[Corollary V.5.2]{Yosida}), and a detailed analysis of this operation is deferred to Lemma \ref{strong unique lem}.
 Finally, for the integrals in the weak formulation to be well-defined, the symbol $\psi(t,\xi)$ must be locally integrable with respect to the frequency variable $\xi$.

\end{rem}

Finally, we introduce our weakest notion of a solution, which we term the spectral-limit solution. 
This terminology is motivated by approximation methods in spectral analysis.

\begin{defn}[Spectral-limit solution]
									\label{spectral-limit soluion}
A function $u \in \mathrm{L}_{2,loc}((0,T) \times \mathbb{R}^d)$ is defined as an \emph{spectral-limit solution} to equation \eqref{ab eqn} if there exists a sequence $\{u_n\}_{n \in \mathbb{N}} \subset \mathrm{L}_{2,loc}((0,T) \times \mathbb{R}^d)$ converging to $u$ in $\mathrm{L}_{2,loc}((0,T) \times \mathbb{R}^d)$, such that each $u_n$ is a spatial Fourier weak solution to \eqref{ab eqn} associated with the truncated operator $\psi_n(t,-\mathrm{i}\nabla)$. 
That is, for any test function  $\varphi \in \mathcal{F}^{-1}\mathrm{C}_c^\infty(\mathbb{R}^d)$, 
the identity
\begin{align*}
\left(u_n(t,\cdot),\varphi\right)_{\mathrm{L}_2(\bR^d)} =  \int_0^t \left(u_n(s,\cdot) , \overline \psi_n(s,-\mathrm{i}\nabla)\varphi \right)_{\mathrm{L}_2(\bR^d)} \mathrm{d}s 
+ \int_0^t \left( f(s,\cdot),\varphi\right)_{\mathrm{L}_2(\bR^d)} \mathrm{d}s
\quad a.e.~t\in (0,T)
\end{align*}
holds. 
Here, the operator is given by
$$
\psi_{n}(s,-\mathrm{i}\nabla)u_n(s,x) = \mathcal{F}_{\xi}^{-1} \left[  \psi_{n}(s,\xi) \mathcal{F}_y[u_n(s,y)](\xi)\right](x), 
$$
where the truncated symbol $\psi_n(s,\xi)$ is defined by bounding its real and imaginary parts:
$$
\psi_{n}(s,\xi) := \left( (-n) \vee \Re[\psi(s,\xi)] \wedge n \right)  + \mathrm{i}\left( (-n) \vee \Im[\psi(s,\xi)] \wedge n \right).
$$
Such a sequence $\{u_n\}_{n \in \mathbb{N}}$ is referred to as a spectral-limit sequence (for the solution $u$).
\end{defn}

\begin{rem}
									\label{solution inculsion remark}
 In the preceding definition, we chose the spatial Fourier weak solutions $u_n$ corresponding to the truncated symbols $\psi_n$ as the spectral-limit sequence. 
Theorem \ref{main thm} constructs such a sequence and proves its convergence for the explicit solution \eqref{unique solution}. 
Under the additional operator-integrability hypotheses of Lemma \ref{strong solution lem}, every strong solution is also a spatial Fourier weak solution. We do not assert a general implication from an arbitrary spatial Fourier weak solution to a spectral-limit solution without a separate convergence argument for its truncated approximations.
\end{rem}

\begin{rem}
Identifying a solution as the limit of truncated spectral approximations is a well-established method in the study of pseudo-differential operators and Lévy-type processes. 
Historically, this technique stems from the construction of Feller semigroups for generators with unbounded symbols, where the operator is ``tamed" via frequency-domain cutoffs to maintain analytical control. 
This ``truncation and passage to the limit" strategy—sharing the conceptual foundations of Yosida approximations—facilitates the construction of $L_2$-solutions even when the symbol $\psi(s, \xi)$ exhibits singular growth or non-symmetric, complex-valued behavior. 
Typically, by deriving uniform estimates for the truncated sequence $\{u_n\}$, one can employ weak compactness arguments or the completeness of Bessel potential spaces to establish a unique spectral-limit solution. 
For foundational studies on these spectral truncations, we refer to \cite{FJS1, J1, Ke1}. However, it is crucial to note that such weak compactness strategies are inapplicable in our current setting. 
As discussed in Remark \ref{strong weak rem}, our symbols are generally too singular to admit a standard weak formulation; 
specifically, $\psi(t, \xi)$ may exhibit arbitrary blow-up at any frequency $\xi$, thereby precluding a weak formulation based on test functions.

\end{rem}

\begin{defn}[A weighted $\phi$-potential space]
										\label{phi space}
Let $w$ be a positive function defined almost everywhere on $(0,T)$, and let $\phi$ be a complex-valued function defined almost everywhere on $\mathbb{R}^d$. 
We denote by $H_{2,loc}^{\phi}\left( (0,T) \times \mathbb{R}^d, w(t)\mathrm{d}t\mathrm{d}x \right)$ the space of all measurable functions $u(t,x)$ such that for every $T' \in (0,T)$,
$$
\int_0^{T'} \int_{\mathbb{R}^d} \left|u(t,x)\right|^2 w(t) \mathrm{d}x \mathrm{d}t + \int_0^{T'} \int_{\mathbb{R}^d} \left|\phi(-\mathrm{i}\nabla)u(t,x)\right|^2 w(t) \mathrm{d}x \mathrm{d}t < \infty.
$$
Here, the pseudo-differential operator is defined via the Fourier transform as
$$
\phi(-\mathrm{i}\nabla)u(t,x) = \mathcal{F}_\xi^{-1}\left[ \phi(\xi) \mathcal{F}_y[u(t,y)](\xi) \right](x).
$$ 
By Plancherel's theorem, a function $u$ belongs to 
$$
H_{2,loc}^{\phi}\left( (0,T) \times \mathbb{R}^d, w(t)\mathrm{d}t\mathrm{d}x \right)
$$ 
if and only if, for every $T' \in (0,T)$,
$$
\int_0^{T'} \int_{\mathbb{R}^d} \left|\mathcal{F}[u(t,\cdot)](\xi)\right|^2 w(t) \mathrm{d}\xi \mathrm{d}t 
+ \int_0^{T'} \int_{\mathbb{R}^d} \left|\phi(\xi)\mathcal{F}[u(t,\cdot)](\xi)\right|^2 w(t) \mathrm{d}\xi \mathrm{d}t < \infty. 
$$
When the weight is trivial, \textit{i.e.}, $w(t) \equiv 1$, we simplify the notation to $H_{2,loc}^{\phi}\left( (0,T) \times \mathbb{R}^d \right)$.
\end{defn}

\begin{rem}
The space $H_{2,loc}^{\phi}\left( (0,T) \times \mathbb{R}^d, w(t)\mathrm{d}t\mathrm{d}x \right)$ is complete. 
We emphasize that no structural conditions on the symbol $\phi$—such as smoothness, ellipticity, or growth bounds—are required for the completeness of the space. For any fixed 
$T' \in (0,T)$, the spatial Fourier transform maps $H_{2}^{\phi}\left( (0,T') \times \mathbb{R}^d, w(t)\mathrm{d}t\mathrm{d}x \right)$ isometrically onto the weighted $L_2$ space
$$
L_2\left( (0,T') \times \mathbb{R}^d, \, \left(1 + \vert{}\phi(\xi)\vert{}^2\right)w(t)\mathrm{d}\xi\mathrm{d}t \right)
$$
by Plancherel's identity. Consequently, the space inherits Hilbert (and thus Banach) space completeness directly from the standard completeness of weighted $L_2$ spaces, which demands only the measurability of $\phi$. 
Furthermore, equipped with the countable family of seminorms induced by $T' \in (0,T)$, the local space $H_{2,\mathrm{loc}}^{\phi}\left( (0,T) \times \mathbb{R}^d, w(t)\mathrm{d}t\mathrm{d}x \right)$ forms a Fréchet space.
\end{rem}

\begin{rem}
While Definition \ref{phi space} introduces the $\phi$-potential space for an arbitrary complex-valued function $\phi$ defined almost everywhere on $\mathbb{R}^d$, our primary theorems focus exclusively on non-negative symbols $\phi$ to simplify the presentation. 
This restriction naturally captures the most relevant physical applications, as $\phi$ acts as a symmetric compensating symbol regulating the broader complex-valued symbol $\psi$. 
Despite this focus in the main results, we prove several intermediate lemmas for general complex-valued symbols to preserve mathematical generality.

It is worth noting the rich literature surrounding specific classes of complex-valued symbols. 
For instance, Walter Farkas, Niels Jacob, and René L. Schilling \cite{J1, FJS1} have made foundational contributions to the functional analytic framework of pseudo-differential operators by systematically developing the theory of function spaces generated by continuous negative definite symbols. Bridging probability theory—specifically the study of Lévy and Feller processes—with harmonic analysis, they rigorously established the properties of generalized Bessel potential spaces (often referred to as anisotropic fractional Sobolev spaces). 
Their comprehensive work proved essential structural characteristics of these spaces, including their complete Banach and Hilbert space formulations, Sobolev-type continuous embeddings, duality, and the critical density of the Schwartz class. (Note that while the literature typically refers to these as ``$\psi$-Bessel potential spaces," we utilize $\phi$ here to avoid confusion with the main symbol $\psi$ in \eqref{ab eqn}).

Crucially, our symbol $\phi$ is defined under much broader assumptions and does not, in general, correspond to the generator of a stochastic process. 
Consequently, the elegant structural properties of the function spaces derived in \cite{J1, FJS1} cannot be universally expected in our generalized setting. 
For example, while $\phi$ is assumed to be a non-negative function in one of our main results (Theorem \ref{main thm 3}), the sole mathematical requirement is simply that $\phi(\xi) \ge 0$ almost everywhere. 
Because this condition is so minimal, one cannot expect the resulting space to inherently possess nice properties as a function space. 
We urge the reader to carefully distinguish our simple non-negativity assumption from the highly structured ``continuous negative definite symbols" studied in \cite{J1, FJS1}.
\end{rem}

To clearly highlight our primary results and their underlying assumptions, we detail them in the subsequent section.

\mysection{Main result}

Let us first establish our assumptions regarding the weight function. 
In our framework, the weight depends solely on the temporal variable. 
This temporal weight plays a crucial role in constructing both spatial Fourier weak and strong solutions by mitigating the singularities of the symbols. 
Specifically, our symbols are permitted to blow up arbitrarily near the initial time. 
Consequently, classical Muckenhoupt weights are insufficient to control these singular behaviors. 
It is therefore essential to introduce a new class of weights tailored to accommodate this arbitrary, symbol-dependent blow-up. The detailed assumption for our weight is given below.

\begin{assumption}[Quasi-decreasing Weight]
										\label{weight as}
Let $w$ be a real-valued measurable function defined a.e. on the interval $(0,T)$. We assume that $w$ satisfies the following conditions:
\begin{enumerate}[(i)]
\item The weight is strictly positive almost everywhere:
\begin{align*}
w(t) >0 \quad \text{a.e.}~ t \in (0,T)
\end{align*}
\item There exists a constant $N_w > 0$ such that the weight is quasi-decreasing; that is,
\begin{align*}
\frac{w(t)}{w(s)} \leq N_w \quad \text{a.e.}~ 0<s<t<T.
\end{align*}
\end{enumerate}
\end{assumption}

\begin{rem}
A quasi-decreasing (or almost decreasing) function serves as a robust generalization of a classical non-increasing function. It is defined by the condition $w(t) \le C w(s)$ for all $s < t$, where $C \ge 1$ is an absolute constant. 
The core advantage of such a function is its ability to capture the macroscopic decay or blow-up of a standard decreasing function while simultaneously permitting localized, microscopic oscillations. 
Structurally, a powerful property of a quasi-decreasing function is its equivalence to a purely non-increasing function $\tilde{w}(t)$. Specifically, one can construct $\tilde{w}$ as either
$$
\tilde{w}(t) := \esssup_{t \le s < T} w(s) \quad \text{or} \quad \tilde{w}(t) := \essinf_{0 < s \le t} w(s),
$$
ensuring that $c_1 \tilde{w}(t) \le w(t) \le c_2 \tilde{w}(t)$ for some constants $c_1, c_2 > 0$. Thanks to this tight equivalence, quasi-decreasing weights seamlessly inherit the integrability and convergence properties of monotonic functions. 
This inherent stability under integral transformations, such as Hardy-type operators, makes them indispensable in the study of weighted spaces—particularly when addressing arbitrary endpoint singularities that classical Muckenhoupt $A_p$-weights fail to control. 
For further details, we refer the reader to \cite{KMP1,K1,BGT1,KMS1}.
\end{rem}

Recall that our assumption requires neither continuity nor the existence of a finite limit at $t=0$. 
Consequently, this framework accommodates the following mathematically interesting examples.
\begin{example}
\begin{itemize}
\item It naturally follows that every positive, non-increasing function $w$ on $(0,T)$ satisfies this assumption. In particular, our weight class includes functions such as $w(t) = \exp(1/t^\alpha)$ for $\alpha > 0$. This means the class permits weights that blow up exponentially near the initial time. In fact, because any non-increasing function is admissible, the weight can exhibit an arbitrarily severe blow-up as $t \to 0$.

\item Furthermore, highly fluctuating functions also satisfy the assumption, such as $w(t) = \frac{2 + \sin(1/t^\beta)}{t^\alpha}$ for positive constants $\alpha$ and $\beta$.

\item While our sine example $\sin(1/t^\beta)$ oscillates with rapidly increasing frequency as $t \to 0$, the class also includes weights that oscillate logarithmically:
$$
w(t) = \frac{1}{t^\alpha} \left( 2 + \cos\left( \ln\left( \frac{1}{t} \right) \right) \right) \quad \text{for } \alpha > 0
$$

\item There is no requirement for the weight to be continuous. 
We can explicitly construct a non-smooth weight with jumps that still belongs to our class:
$$
w(t) = \frac{1}{t^\alpha} \left( 1.5 + 0.5(-1)^{\lfloor 1/t \rfloor} \right) \quad \text{for } \alpha > 0
$$

\item A quasi-decreasing function need not blow up; it may remain bounded without converging:
$$
w(t) = 2 + \sin\left(\frac{1}{t^\beta}\right) \quad \text{for } \beta > 0
$$

\item We can combine different types of singularities and extremely high-frequency oscillations into a single quasi-decreasing weight:
$$
w(t) = \exp\left(\frac{1}{t}\right) \ln\left(1 + \frac{1}{t}\right) \left( 2 + \cos\left( \exp\left(\frac{1}{t}\right) \right) \right)
$$

\end{itemize}
\end{example}

Recall that the symbol $\psi(t,\xi)$ is a complex-valued function defined almost everywhere on the domain $(0,T) \times \mathbb{R}^d$. 
We now detail the primary assumptions imposed on $\psi(t,\xi)$, beginning with the non-positivity (or degenerate ellipticity) condition required of its real part.
\begin{assumption}[\bf Degenerate Ellipticity]
										\label{main as}
Assume that 
\begin{align}
									\label{elliptic con}
\Re[\psi(t,\xi)] \leq 0  \qquad a.e.~ (t,\xi)  \in (0,T) \times \bR^d.
\end{align}
\end{assumption}

\begin{assumption}[\bf Interior Local Integrability in Time]
										\label{main as 2}
Assume that for almost every $\xi \in \bR^d$, the symbol $\psi(t,\xi)$ is integrable over any compact subinterval of $(0,T)$ with respect to the temporal variable, meaning
$$
\int_s^t|\psi(r,\xi)| \mathrm{d}r < \infty \quad \text{for all } 0<s<t<T.
$$
\end{assumption}

Surprisingly, even under such weak conditions, a rigorous well-posedness theory can be established provided we seek a spectral-limit solution (Definition \ref{spectral-limit soluion}).

\begin{thm}
							\label{main thm}
Let $w$ be a weight satisfying Assumption \ref{weight as}, and suppose that Assumptions \ref{main as} and \ref{main as 2} hold. 
Then for any $f \in \mathrm{L}_{2,loc}\left( (0,T) \times \bR^d, w(t)\mathrm{d}t\mathrm{d}x \right)$, there exists a unique spectral-limit solution $u \in \mathrm{L}_{2,loc}\left( (0,T) \times \bR^d, w(t)\mathrm{d}t\mathrm{d}x \right)$ to equation \eqref{ab eqn}. Furthermore, this solution satisfies the following estimates:
\begin{align}
									\label{a priori est 2-2}
\int_0^t \int_{\bR^d} \left|u(s,x)\right|^2 \frac{w(s)}{s^2}\mathrm{d}x\mathrm{d}s
\leq 4N_w\int_0^t\int_{\bR^d}|f(s,x)|^2 w(s)\mathrm{d}x\mathrm{d}s,
\end{align}
and
\begin{align}
									\label{a priori est 2-3}
\esssup_{s \in [0,t]} \left( \|u(s,\cdot)\|^2_{\mathrm{L}_2(\bR^d)} \frac{w(s)}{s} \right)
\leq  N_w\int_0^{t} \int_{\bR^d} |f(s,x)|^2  w(s)  \mathrm{d}x\mathrm{d}s
\end{align}
for all $t \in (0,T)$.
Moreover, the solution $u$ can be represented as
\begin{align}
									\label{unique solution}
u(t,x)=  \cF^{-1}\left[ \int_0^t  \exp\left(\int_s^t\psi(r,\cdot)\mathrm{d}r \right) \cF[f(s,\cdot)]\mathrm{d}s \right](x).
\end{align}
\end{thm}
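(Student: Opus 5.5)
Everything will be done on the Fourier side, where the problem decouples into a family of scalar ODEs indexed by $\xi$. Set $\hat f(s,\xi)=\mathcal{F}[f(s,\cdot)](\xi)$. Define $u$ by \eqref{unique solution}, and for each $n$ define $u_n$ by the same formula with $\psi$ replaced by the truncation $\psi_n$ of Definition \ref{spectral-limit soluion}. Since $\Re[\psi]\le 0$ (Assumption \ref{main as}) and the truncation preserves the sign of the real part, we have
\[
\Big|\exp\Big(\int_s^t\psi(r,\xi)\,\mathrm{d}r\Big)\Big|\le 1 \quad\text{and}\quad \Big|\exp\Big(\int_s^t\psi_n(r,\xi)\,\mathrm{d}r\Big)\Big|\le 1 .
\]
By Assumption \ref{main as 2}, these exponentials are well defined for a.e. $\xi$ and all $0<s<t<T$. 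Consequently $|\hat u(t,\xi)|\le\int_0^t|\hat f(s,\xi)|\,\mathrm{d}s$, and the same bound holds for $\hat u_n$.

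\textbf{Estimates.} The estimates come from the generalized Minkowski inequality (to place $\hat u(t,\cdot)$ in $L_2$) and Cauchy--Schwarz with the quasi-decreasing weight:
\[
\Big(\int_0^t|\hat f(s,\xi)|\,\mathrm{d}s\Big)^2\le \int_0^t|\hat f|^2w(s)\,\mathrm{d}s\int_0^t\frac{\mathrm{d}s}{w(s)}\le \frac{N_w\,t}{w(t)}\int_0^t|\hat f|^2 w\,\mathrm{d}s,
\]
where the last step uses $1/w(s)\le N_w/w(t)$ for $s<t$. Integrating in $\xi$ and applying Plancherel gives \eqref{a priori est 2-3}. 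For \eqref{a priori est 2-2}, I would use Hardy's inequality $\int_0^t s^{-2}\big(\int_0^s g\big)^2\,\mathrm{d}s\le 4\int_0^t g^2$ applied to $g(s)=|\hat f(s,\xi)|\sqrt{w(s)}$. To justify this, note that $\int_0^s|\hat f(r)|\,\mathrm{d}r\le \sqrt{N_w}\, w(s)^{-1/2}\int_0^s g(r)\,\mathrm{d}r$, since $w(r)\ge w(s)/N_w$ for $r<s$. Squaring and multiplying by $w(s)/s^2$ yields the constant $4N_w$. The same bounds hold uniformly in $n$ for $u_n$ and, by linearity, for differences of solutions. In particular $u,u_n\in \mathrm{L}_{2,loc}$ with weight $w$. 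Since $w$ is quasi-decreasing, $w(t)\ge w(T')/N_w>0$ on $(0,T')$, so these functions also lie in the unweighted $\mathrm{L}_{2,loc}$.

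\textbf{Existence.} First check that each $u_n$ is a spatial Fourier weak solution for the bounded symbol $\psi_n$. For fixed $\xi$ the map $t\mapsto\hat u_n(t,\xi)$ is absolutely continuous and satisfies $\hat u_n(t)=\int_0^t(\psi_n\hat u_n+\hat f)\,\mathrm{d}s$, which follows from Fubini applied to the Duhamel formula. Pairing with $\mathcal{F}[\varphi]\in C_c^\infty$ and interchanging the integrals, justified by $|\psi_n|\le 2n$, compact support, and the bounds above, gives \eqref{weak formulation} with $\psi_n$ in place of $\psi$. Next, $\hat u_n\to\hat u$ pointwise, because $\psi_n\to\psi$ a.e. and $\int_s^t\psi_n\,\mathrm{d}r\to\int_s^t\psi\,\mathrm{d}r$ by dominated convergence with dominating function $|\psi|$ on $[s,t]$ (Assumption \ref{main as 2}). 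Combining this with the domination $|\hat u_n-\hat u|\le 2\int_0^t|\hat f|\,\mathrm{d}s\in L_2((0,T')\times\mathbb{R}^d)$ from the estimate above, dominated convergence and Plancherel give $u_n\to u$ in $\mathrm{L}_{2,loc}$. Hence $u$ is a spectral-limit solution. The only point needing care is that near $s=0$ we never need integrability of $\psi$: the Duhamel exponential involves only $\int_s^t$ with $s>0$.

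\textbf{Uniqueness.} This is the main obstacle. Let $v$ be any spectral-limit solution with spectral-limit sequence $v_n$. The step is to show that a spatial Fourier weak solution for a bounded symbol $\psi_n$ with $v_n\in\mathrm{L}_{2,loc}$ must equal $u_n$. Since $\psi_n$ is bounded, I would take $\varphi$ with $\mathcal{F}[\varphi]=\eta$ arbitrary in $C_c^\infty$ to obtain, for a.e. $t$, $\int \hat v_n(t)\bar\eta=\int\int_0^t(\psi_n\hat v_n+\hat f)\,\mathrm{d}s\,\bar\eta$. By density (a countable dense family of $\eta$ and Fubini), this yields $\hat v_n(t,\xi)=\int_0^t(\psi_n\hat v_n+\hat f)\,\mathrm{d}s$ for a.e. $(t,\xi)$. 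Here $\int_0^t|\hat f|\,\mathrm{d}s<\infty$ a.e. thanks to the weighted estimate. The difference $\hat v_n-\hat u_n$ then solves a linear ODE with locally integrable coefficient and zero initial data, so by Gronwall it vanishes for a.e. $\xi$. The integrability of $\hat v_n$ near $0$ is controlled because $\psi_n$ is bounded and $v_n\in L_2((0,T')\times\mathbb{R}^d)$. Thus $v_n=u_n$, and therefore $v=\lim v_n=\lim u_n=u$, because limits in $\mathrm{L}_{2,loc}$ are unique.
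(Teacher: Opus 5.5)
Your proposal is correct and follows essentially the same route as the paper. Both arguments use the explicit Duhamel formula on the Fourier side and derive the weighted bounds from the quasi-decreasing property together with Hardy's inequality (as in Corollary \ref{l2 bounded corollary 1}). Both then get $u_n\to u$ by dominated convergence of the truncated representations, and obtain uniqueness by identifying every truncated weak solution with the explicit formula through an integrating-factor/Gronwall argument.

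The only differences are minor. You verify the truncated weak formulation and its uniqueness directly by hand, whereas the paper cites Theorem \ref{main thm 2} and Theorem \ref{uniqueness theorem}. You also prove the convergence in the unweighted local $\mathrm{L}_2$ space, while the paper proves it in the weighted one.
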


We provide the proof of Theorem \ref{main thm} in Section \ref{pf main thm}.

\begin{rem}
None of the estimates established in Theorem \ref{main thm} depend on any upper bound for the symbol $\psi$. 
This independence facilitates a specific approximation approach, even for solutions involving unbounded symbols. 
While the symbol is permitted to blow up arbitrarily in the temporal variable, Assumption \ref{main as 2} restricts this behavior to neighborhoods of the initial and terminal times. 
Conversely, blow-up with respect to the frequency variable $\xi$ is completely unrestricted. 
Furthermore, we do not require the symbol $\psi(t,\xi)$ to be continuous at any point in either the temporal or frequency domains.
\end{rem}

Assumption \ref{main as 2} alone is insufficient to interpret \eqref{ab eqn} in our weak sense using test functions. 
It is therefore natural to seek an additional condition on the symbol that permits such a weak formulation.

To establish stronger temporal regularity for the solution $u$, the most intuitive approach might be to impose stricter integrability conditions on the symbol with respect to the time variable. 
Surprisingly, however, this is not strictly necessary near $t=0$. 
Because the quasi-decreasing weight function we employ is sufficiently general to capture virtually any type of initial growth, we can still allow the symbol to exhibit arbitrary blow-up behavior as it approaches the initial time. 
The precise conditions utilized in this framework are detailed below.

\begin{assumption}[\textbf{local integrability}]
										\label{main as 3}
Let $w$ be a weight satisfying Assumption \ref{weight as}. We assume that the symbol $\psi(t,\xi)$ satisfies the following local square-integrability condition on $(0,T) \times \bR^d$:
\begin{align*}
\int_0^t \int_{B_R} \left(|\psi(s,\xi)|+|\psi(s,\xi)|^2 \frac{s^2}{w(s)} \right)\mathrm{d}\xi \mathrm{d}s < \infty \quad \text{for all } t \in (0,T) \text{ and } R > 0.
\end{align*}
\end{assumption}

\begin{thm}
							\label{main thm 2}
Suppose that Assumptions \ref{main as} and \ref{main as 3} hold, and let $f \in \mathrm{L}_{2,loc}\left( (0,T) \times \bR^d, w(t)\mathrm{d}t\mathrm{d}x \right)$. Then there exists a spatial Fourier weak solution $u \in \mathrm{L}_{2,loc}\left( (0,T) \times \bR^d, w(t)\mathrm{d}t\mathrm{d}x \right)$ to equation \eqref{ab eqn} satisfying \eqref{a priori est 2-2} and \eqref{a priori est 2-3}. 
This solution is unique among spatial Fourier weak solutions satisfying \eqref{a priori est 2-2}--\eqref{a priori est 2-3}, and it is explicitly given by \eqref{unique solution}.
\end{thm}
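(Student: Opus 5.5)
Everything is done on the Fourier side via Plancherel, using the explicit candidate \eqref{unique solution}. Set $\hat f(s,\xi)=\cF[f(s,\cdot)](\xi)$ and
$$
\hat u(t,\xi)=\int_0^t \exp\Big(\int_s^t\psi(r,\xi)\,\mathrm{d}r\Big)\hat f(s,\xi)\,\mathrm{d}s .
$$
Assumption \ref{main as 3} implies Assumption \ref{main as 2}, and in fact gives $\int_0^t|\psi(r,\xi)|\,\mathrm{d}r<\infty$ for a.e. $\xi$ down to $r=0$. By Assumption \ref{main as}, $|\exp(\int_s^t\psi)|\le 1$, so $|\hat u(t,\xi)|\le\int_0^t|\hat f(s,\xi)|\,\mathrm{d}s$.

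\textbf{Step 1: the estimates.} The quasi-decreasing property lets me write, for $s<t$, $w(t)\le N_w w(s)$. Then Cauchy--Schwarz gives
$$
|\hat u(t,\xi)|^2\,\frac{w(t)}{t}\le N_w\int_0^t|\hat f(s,\xi)|^2 w(s)\,\mathrm{d}s ,
$$
and integrating in $\xi$ yields \eqref{a priori est 2-3}. For \eqref{a priori est 2-2}, I would instead bound $|\hat u(t,\xi)|\sqrt{w(t)}/t\le N_w^{1/2}\, t^{-1}\int_0^t|\hat f(s,\xi)|\sqrt{w(s)}\,\mathrm{d}s$ and apply Hardy's inequality in $L_2(0,t)$, which has constant $4$. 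Joint measurability of $\hat u$ and $u(t,\cdot)\in L_2$ follow from Fubini. Presumably these are the a priori estimates of the later sections, which I would take as available.

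\textbf{Step 2: existence in the weak sense.} Fix $\varphi\in\cF^{-1}\mathrm{C}_c^\infty$ with $\operatorname{supp}\cF\varphi\subset B_R$. For a.e. $\xi$, the function $t\mapsto\hat u(t,\xi)$ is absolutely continuous on $[0,T')$ with
$$
\hat u(t,\xi)=\int_0^t\big(\psi\hat u+\hat f\big)(s,\xi)\,\mathrm{d}s .
$$
I would check this by differentiating the Duhamel formula, which is legitimate because $\int_0^t|\psi|<\infty$ and $\hat f(\cdot,\xi)\in L_{1,loc}$; a Fubini swap of the $s$ and $r$ integrals gives it directly. Multiplying by $\overline{\cF\varphi}$ and integrating over $B_R$, Plancherel yields \eqref{weak formulation} once I justify interchanging $\int_{B_R}\mathrm{d}\xi$ with $\int_0^t\mathrm{d}s$. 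This needs
$$
\int_0^t\int_{B_R}|\psi(s,\xi)||\hat u(s,\xi)|\,\mathrm{d}\xi\,\mathrm{d}s<\infty .
$$
Writing $|\psi||\hat u|=\big(|\psi|s/\sqrt{w(s)}\big)\big(|\hat u|\sqrt{w(s)}/s\big)$, Cauchy--Schwarz bounds this by the second term of Assumption \ref{main as 3} times the left side of \eqref{a priori est 2-2}, both finite. The first term of the assumption handles the $\hat f$ part and the local $L_1$ requirement of the weak formulation. This is exactly why the weight $s^2/w(s)$ appears in Assumption \ref{main as 3}.

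\textbf{Step 3: uniqueness — the main obstacle.} Let $v$ be the difference of two such weak solutions. Then $v$ satisfies the homogeneous weak equation together with \eqref{a priori est 2-2}--\eqref{a priori est 2-3} at $f=0$ level of integrability, i.e. the finiteness of those weighted norms. Since $\cF\varphi$ ranges over $\mathrm{C}_c^\infty$, the weak identity should localize to $\hat v(t,\xi)=\int_0^t\psi\hat v\,\mathrm{d}s$ for a.e. $(t,\xi)$. This requires a density argument over $\mathrm{C}_c^\infty(B_R)$ plus the same Fubini bound, which is valid since $v$ has finite left side of \eqref{a priori est 2-2}. Then for a.e. $\xi$, $\hat v(\cdot,\xi)$ solves a scalar linear ODE with $L_1(0,t)$ coefficient and zero initial value. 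A Gronwall/integrating-factor argument, multiplying by $\exp(-\int_0^t\psi)$, forces $\hat v\equiv0$.

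The delicate points are the null sets, which must be uniform over a countable dense family of test functions, and the exceptional $t$-sets. I would handle these by choosing the continuous-in-$t$ modification as in Remark \ref{continuous modification}, and by using Lemma \ref{strong unique lem} for the interchange if needed.
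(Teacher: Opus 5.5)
Your proposal is correct and follows the paper's proof. It uses the Fourier-side variation-of-constants identity, justifies Fubini/Plancherel by the Cauchy--Schwarz bound of $\int_0^t\int_{B_R}|\psi|\,|\hat u|$ by the $s^2/w(s)$-term of Assumption~\ref{main as 3} times the weighted norm in \eqref{a priori est 2-2}, and gets uniqueness from a countable dense family of frequency test functions followed by the integrating factor $\exp\bigl(-\int_0^t\psi(r,\xi)\,\mathrm{d}r\bigr)$. The differences are cosmetic: for existence the paper checks the needed integrability through condition \eqref{20260324 01} (via Hardy's inequality) and Corollary~\ref{weak well weight} rather than reusing \eqref{a priori est 2-2} directly, and your fallback references to Remark~\ref{continuous modification} and Lemma~\ref{strong unique lem} concern strong solutions and are neither needed nor applicable here.
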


The proof of Theorem \ref{main thm 2} can also be found in Section \ref{pf main thm}.
\vspace{2mm}

Next, we introduce a condition stronger than Assumption \ref{main as 3}. 
This strengthened assumption enables us to establish a maximal regularity estimate and, subsequently, to construct a strong solution using an approximation argument based on this estimate.

\begin{assumption}[\textbf{(Degenerate) Ellipticity with respect to a spectral function $\phi$}]
									\label{main as 4}
Let $\phi$ be a non-negative function defined a.e.\ on $\bR^d$. 
We assume that the real part of the symbol satisfies the bound
\begin{align}
									\label{elliptic con phi}
\Re[\psi(t,\xi)] \leq -\phi(\xi) \quad \text{for a.e. } (t,\xi) \in (0,T) \times \bR^d.
\end{align}
\end{assumption}

Here, the term ``spectral function'' $\phi(\xi)$ refers to a purely frequency-dependent profile that governs the coercivity of the symbol in Fourier space. 
Because the only mathematical requirement imposed on $\phi$ is non-negativity, the choice of this function retains a great deal of flexibility.

\begin{assumption}[\textbf{Spatial compatibility with the spectral function $\phi$}]
										\label{main as 5}
Let $w$ be a weight satisfying Assumption \ref{weight as}, and suppose that Assumption \ref{main as 4} holds for a non-negative function $\phi$. We assume that
\begin{align}
										\label{20260403 10}
\int_0^t \esssup_{\xi \in \bR^d} \left( \frac{|\psi(s,\xi)|}{\phi(\xi)} \right)^2 \frac{1}{w(s)} \mathrm{d}s < \infty \quad \text{for all } t \in (0,T).
\end{align}
\end{assumption}

At first glance, requiring $\phi$ to be strictly positive in Assumption \ref{main as 4} appears necessary to prevent singularities in the ratio within Assumption \ref{main as 5}. 
However, because the principal symbol $\psi(s,\xi)$ can degenerate at any point, 
$\phi(\xi)$ may also vanish wherever $\psi(\xi) = 0$, provided we set $\frac{0}{0} := 0$ in \eqref{20260403 10}. 
This convention perfectly aligns with the assumption's core objective, which is to guarantee that
$$
|\psi(t,\xi)| = \frac{|\psi(t,\xi)|}{\phi(\xi)} \phi(\xi)
$$
across all $t \in (0,T)$ and $\xi \in \bR^d$. 
Furthermore, as shown in Remarks \ref{phi zero lemma} and \ref{degenerate operator}, this relaxation does not affect the resulting maximal regularity estimates and continuity of operators.

\begin{thm}
							\label{main thm 3}
Suppose that Assumptions \ref{main as 3} and \ref{main as 5} hold, and let $f \in \mathrm{L}_{2,loc}\left( (0,T) \times \bR^d, w(t)\mathrm{d}t\mathrm{d}x \right)$. 
Then there exists a unique strong solution $u$ to equation \eqref{ab eqn} belonging to the class
$$
\mathrm{L}_{\infty,2, loc}\left( (0,T) \times \bR^d ,\frac{w(t)}{t}\mathrm{d}t \mathrm{d}x\right) \cap \mathrm{L}_{2, loc}\left( (0,T) \times \bR^d,\frac{w(t)}{t^2}\mathrm{d}t\mathrm{d}x \right) \cap \mathrm{H}_{2,loc}^{\phi}\left( (0,T) \times \bR^d, w(t)\mathrm{d}t\mathrm{d}x \right).
$$
This solution satisfies the estimates \eqref{a priori est 2-2} and \eqref{a priori est 2-3}. In addition, for every $t \in (0,T)$, it satisfies
\begin{align}
									\notag
&\int_0^t \left( \int_{\bR^d}  \left| \psi(s,-\mathrm{i}\nabla) u(s,x) \right|^2 \mathrm{d}x  \right)^{1/2} \mathrm{d}s \\
									\label{a priori est 00}
&\leq  \left(\int_0^t  \esssup_{\xi \in \bR^d} \left(\left| \frac{\psi(s,\xi)}{\phi(\xi)} \right|\right)^2 \frac{1}{w(s)} \mathrm{d}s \right)^{1/2} \left(\int_0^t \int_{\bR^d} \left|\phi(-\mathrm{i}\nabla)u(s,x)\right|^2 w(s)\mathrm{d}x\mathrm{d}s\right)^{1/2}
\end{align}
and
\begin{align}
 \int_0^t \int_{\bR^d} \left|\phi(-\mathrm{i}\nabla)u(s,x)\right|^2 w(s)\mathrm{d}x\mathrm{d}s 
									\label{a priori est}
&\leq N_w\int_0^t \int_{\bR^d}|f(s,x)|^2 w(s) \mathrm{d}x\mathrm{d}s.
\end{align}

Moreover, the unique solution $u$ is explicitly given by \eqref{unique solution}.

\end{thm}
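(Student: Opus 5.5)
The plan is to work entirely on the Fourier side. I take $u$ to be the explicit function \eqref{unique solution} and set $\hat u(t,\xi)=\int_0^t \exp\bigl(\int_s^t\psi(r,\xi)\,\mathrm{d}r\bigr)\hat f(s,\xi)\,\mathrm{d}s$. By Theorem \ref{main thm 2}, which rests on Assumptions \ref{main as} and \ref{main as 3}, $u$ is already a spatial Fourier weak solution satisfying \eqref{a priori est 2-2}--\eqref{a priori est 2-3}. This is legitimate because Assumption \ref{main as 4} implies Assumption \ref{main as}.

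\textbf{The $\phi$-estimate \eqref{a priori est}.} Fix $\xi$. Since $|\exp(\int_s^t\psi\,\mathrm{d}r)|\le e^{-(t-s)\phi(\xi)}$, we get $|\hat u(t,\xi)|\le\int_0^t e^{-(t-s)\phi(\xi)}|\hat f(s,\xi)|\,\mathrm{d}s$. I apply Cauchy--Schwarz with the splitting $e^{-(t-s)\phi}=e^{-(t-s)\phi/2}\cdot e^{-(t-s)\phi/2}$ and then use $w(t)\le N_w w(s)$. This gives
\[
\phi(\xi)^2|\hat u(t,\xi)|^2 w(t)\le N_w\,\phi(\xi)\Bigl(\int_0^t e^{-(t-s)\phi}\,\mathrm{d}s\Bigr)\int_0^t \phi\, e^{-(t-s)\phi}|\hat f(s,\xi)|^2w(s)\,\mathrm{d}s .
\]
The first factor is at most $1$. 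Integrating in $t\in(0,T')$ and using Fubini, $\int_s^{T'}\phi e^{-(t-s)\phi}\,\mathrm{d}t\le1$, so we obtain $\int_0^{T'}\!\int\phi^2|\hat u|^2w\le N_w\int_0^{T'}\!\int|\hat f|^2w$. Plancherel then yields \eqref{a priori est}. Where $\phi(\xi)=0$ both sides vanish trivially. Combined with \eqref{a priori est 2-2} and $w\lesssim w/t^2$ locally, this places $u\in H^{\phi}_{2,loc}$.

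\textbf{Estimate \eqref{a priori est 00} and the strong formulation.} Pointwise, $|\psi(s,\xi)\hat u(s,\xi)|\le \esssup_\eta(|\psi(s,\eta)|/\phi(\eta))\,\phi(\xi)|\hat u(s,\xi)|$, with the convention $0/0=0$. I take the $L_2(\mathrm{d}\xi)$ norm, then the time integral, and apply Cauchy--Schwarz with weights $w^{-1/2}$ and $w^{1/2}$. This gives \eqref{a priori est 00}, whose right side is finite by Assumption \ref{main as 5} and \eqref{a priori est}. Consequently $s\mapsto\psi(s,-\mathrm{i}\nabla)u(s)$ is Bochner integrable in $L_2(\mathbb{R}^d)$ on $(0,t)$. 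The same argument, using $\int_0^t w^{-1}\,\mathrm{d}s<\infty$ (which follows from Assumption \ref{main as 5} where $\phi>0$, or from the quasi-decreasing property locally), shows that $f$ is also Bochner integrable there.

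For each $\xi$ I then verify the scalar identity $\hat u(t,\xi)=\int_0^t(\psi\hat u+\hat f)(s,\xi)\,\mathrm{d}s$. This is absolute continuity of the Duhamel formula: differentiating under the integral is justified because $\int_0^t|\psi(s,\xi)||\hat u(s,\xi)|\,\mathrm{d}s<\infty$ for a.e.\ $\xi$, which follows from the previous integrability by Fubini. Alternatively, I can start from the weak formulation and pass from test functions to a.e.\ $\xi$ via density of $\mathcal{F}^{-1}\mathrm{C}_c^\infty$. Hille's theorem (Lemma \ref{strong unique lem}) lets the Fourier transform commute with the Bochner integral, and this converts the frequency identity into \eqref{20230213 30}. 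The $L_{\infty,2}$ membership is exactly \eqref{a priori est 2-3}.

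\textbf{Uniqueness.} I expect this to be the main obstacle. Let $v$ be the difference of two strong solutions in the stated class; then $v$ solves the equation with $f=0$. Because $v\in H^\phi_{2,loc}$, the operator bound \eqref{a priori est 00} makes $\psi v$ Bochner integrable, so $v$ is also a spatial Fourier weak solution satisfying \eqref{a priori est 2-2}--\eqref{a priori est 2-3}. Uniqueness in Theorem \ref{main thm 2} then gives $v=0$. An alternative, more direct route: for a.e.\ $\xi$, $\hat v(\cdot,\xi)$ is absolutely continuous on $[0,t]$ with $\hat v(0)=0$ and $\partial_t|\hat v|^2=2\Re\psi|\hat v|^2\le0$, so $\hat v\equiv0$. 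The delicate point in either route is justifying that $\hat v(\cdot,\xi)$ is absolutely continuous and vanishes at $0$ despite possible non-integrability of $\psi$ near $t=0$. I would handle this with the finiteness of \eqref{a priori est 00} down to $s=0$, which comes from the $H^\phi$ membership together with Assumption \ref{main as 5}.
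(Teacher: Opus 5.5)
Your proposal is correct, and for existence it takes a genuinely different and more direct route than the paper. The paper argues by approximation. It takes $f_n\in\frac{1}{\sqrt w}\mathcal{A}$ with $f_n\to f$ and obtains strong solutions $u_n$ from Theorem~\ref{strong solution thm}; that theorem rests on the mollifier argument of Lemma~\ref{strong solution lem} and the global frequency condition \eqref{20260401 01}. It then uses Corollary~\ref{l2 bounded corollary 1}, Lemma~\ref{l2 bounded lemma} and Theorem~\ref{continuity operator} to show that $u_n$ and $\psi(\cdot,-\mathrm{i}\nabla)u_n$ are Cauchy, patches the local limits, and passes to the limit in the $\mathrm{L}_2$-valued Bochner identity. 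Only at the end does it identify the limit with \eqref{unique solution}.

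You instead observe that the maximal regularity estimate already holds for the explicit Duhamel function, so $u\in\mathrm{H}^{\phi}_{2,loc}$ with no approximation at all. Theorem~\ref{continuity operator} then puts $\psi(\cdot,-\mathrm{i}\nabla)u$ in $\mathrm{L}_1((0,t);\mathrm{L}_2)$. The scalar identity $\widehat u=\int_0^t(\psi\widehat u+\widehat f)\,\mathrm{d}s$ holds for a.e.\ $\xi$ via the integrating factor $\exp(\int_0^t\psi(r,\xi)\,\mathrm{d}r)$, which is available because the first term of Assumption~\ref{main as 3} gives $\psi(\cdot,\xi)\in\mathrm{L}_1(0,t)$. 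Identifying the pointwise time integral with the Bochner integral (Fubini against test functions) and applying $\mathcal{F}^{-1}$ then yields \eqref{20230213 30}.

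Your route bypasses the dense class, Theorem~\ref{strong solution thm}, Lemma~\ref{strong solution lem} and the Cauchy/patching step. It also makes clear that the only ingredient needed to upgrade the Duhamel function to a strong solution is the $\mathrm{L}_1(\mathrm{L}_2)$-integrability of the operator term, supplied by Assumption~\ref{main as 5}. The paper's route fits the classical a priori estimate plus density scheme and exhibits $u$ as a limit of strong solutions with regular data. Your Schur-test proof of \eqref{a priori est} is equivalent to the generalized Minkowski argument of Lemma~\ref{l2 bounded lemma}. Your uniqueness argument is essentially the paper's (Theorem~\ref{continuity operator}, Lemma~\ref{strong unique lem}, Theorem~\ref{uniqueness theorem}).

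Two small corrections. First, $v=u_1-u_2$ does not ``satisfy \eqref{a priori est 2-2}--\eqref{a priori est 2-3}''; with $f=0$ that statement would already force $v=0$, so invoking it is circular. What you actually have is finiteness of the $\mathrm{L}_2(w/t^2)$ and $\mathrm{H}^{\phi}$ norms. That is all you need: it gives $v\in\mathrm{H}^{\psi}_{1,2,loc}$, so Theorem~\ref{uniqueness theorem} applies directly, and the uniqueness proof of Theorem~\ref{main thm 2} likewise uses only finiteness.

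Second, the ``delicate point'' you flag is not an obstacle. Your energy argument needs only $\psi\widehat v(\cdot,\xi)\in\mathrm{L}_1(0,t)$, not integrability of $\psi$ itself near $t=0$. That bound follows from $v\in\mathrm{H}^{\psi}_{1,2,loc}$ by Fubini on balls $B_R$.

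Also, $\int_0^t w^{-1}\,\mathrm{d}s<\infty$ comes from the quasi-decreasing property, as you note, not from Assumption~\ref{main as 5}.
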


The proof of Theorem \ref{main thm 3} is similarly deferred to Section \ref{pf main thm}.

\begin{rem}
\begin{itemize}
\item 
By combining inequalities \eqref{a priori est 00} and \eqref{a priori est}, we obtain the following bound:
$$
\begin{aligned}
&\int_0^t \left( \int_{\bR^d}  \left| \psi(s,-\mathrm{i}\nabla) u(s,x) \right|^2 \mathrm{d}x  \right)^{1/2} \mathrm{d}s \\
&\quad\leq  \sqrt{N_w}\left(\int_0^t  \esssup_{\xi \in \bR^d} \left(\left| \frac{\psi(s,\xi)}{\phi(\xi)} \right|\right)^2 \frac{1}{w(s)} \mathrm{d}s \right)^{1/2}
\left(\int_0^t\int_{\bR^d}|f(s,x)|^2 w(s) \mathrm{d}x\mathrm{d}s\right)^{1/2}.
\end{aligned}
$$
This estimate shows that the symbols do not need to be bounded in the temporal variable for a strong solution to exist. 
Instead, weighted local integrability of the symbols is sufficient to guarantee the existence of a unique strong solution.

When studying general PDEs involving non-linearities or variable coefficients, perturbation methods serve as a fundamental tool. Because these techniques generally rely on the availability of strong solutions, PDE applications frequently assume bounded coefficients to ensure such solutions can be readily found. 
However, as demonstrated by our theorem, this strict boundedness assumption can be successfully relaxed to allow for coefficients that are merely locally square $(1/w)$-integrable.

\item 

For convenience, we introduce the space $\mathrm{H}_{1,2,\mathrm{loc}}^{\psi}\left( (0,T) \times \bR^d\right)$ to denote the class of functions $u$ satisfying
\begin{align}
									\label{psi space}
\int_0^t \left( \int_{\bR^d}  \left| \psi(s,-\mathrm{i}\nabla) u(s,x) \right|^2 \mathrm{d}x  \right)^{1/2} \mathrm{d}s < \infty \quad \forall t \in (0,T).
\end{align}
In view of \eqref{a priori est 00}, provided that \eqref{20260403 10} holds, one readily obtains the continuous inclusion
\begin{align*}
\mathrm{H}_{2,loc}^{\phi}\left( (0,T) \times \bR^d, w(t)\mathrm{d}t\mathrm{d}x \right)
\subset 
\mathrm{H}_{1,2,loc}^{\psi}\left( (0,T) \times \bR^d \right)
\end{align*}
Naturally, establishing solution uniqueness within a broader function class yields a stronger and more desirable result. 
As presented in Theorem \ref{uniqueness theorem}, we later demonstrate that the uniqueness indeed holds within this larger space.

\item

The constant in \eqref{a priori est} is the quasi-decreasing constant $N_w$ and is independent of the spatial dimension $d$. 
This dimension-free characteristic also occurs in certain $L_p$-estimates for $p \in (1,\infty)$ when evaluating the $L_p$-norm of the second-order directional derivative. 
Although classical Calderón-Zygmund theory is insufficient to derive this independence, probabilistic methodologies offer a rigorous proof for a particular $L_p$-estimate (as demonstrated in \cite{KP1}).

\item

Furthermore, this framework clarifies the relationship between the main operator and the uniform ellipticity condition. 
For instance, consider the operator $\psi(t, -\mathrm{i}\nabla)u = a^{ij}(t)u_{x^ix^j}$ satisfying uniform ellipticity, meaning there exists a positive constant $\kappa$ such that
$$
a^{ij}(t) \xi^i\xi^j \geq \kappa |\xi|^2 \quad \text{for a.e. } t \in (0,T) \text{ and } \forall \xi \in \bR^d.
$$
By setting $\phi(\xi) = \kappa |\xi|^2$ in \eqref{a priori est}, we can derive the following inequality for the Laplacian:
$$
\int_0^T \int_{\bR^d} \left| (\Delta u) (t,x)\right|^2 w(t)\mathrm{d}x\mathrm{d}t \leq \frac{N_w}{\kappa^2} \int_0^T\int_{\bR^d}|f(t,x)|^2 w(t) \mathrm{d}x\mathrm{d}t.
$$
Notably, the fact that the constant depends exclusively and explicitly on the ellipticity parameter remains valid for general values of $p \in (1,\infty)$. 
Much like the property of dimension independence, deriving this relationship is challenging if one relies solely on the $L_p$-boundedness of singular integral operators. 
Instead, it is most effectively established using probabilistic techniques (see \cite{KID 2024, KK 2023}).

\item The functional class in Theorem \ref{main thm 3} establishes a subtle balance between time-weight decay near $t=0$ and spatial spectral regularity. 
Specifically, the factor $t^{-2}$ in $L_{2,loc}(w(t)/t^2 dtdx)$ quantifies the precise rate at which the solution vanishes as $t \to 0^+$, compensating for the lack of operator integrability, while $H_{2,loc}^\phi(w(t)dtdx)$ captures the optimal spatial regularity governed by the spectral function $\phi(\xi)$.

\end{itemize}
\end{rem}

\begin{rem}
													\label{zero initial reason}

We emphasize that handling non-zero initial conditions presents a fundamental challenge in this framework. 
Formally, one can write the solution $u$ to the initial-value problem
\begin{equation*}
\begin{cases}
\partial_tu(t,x)=\psi(t,-\mathrm{i}\nabla)u(t,x),\quad &(t,x)\in(0,T)\times\mathbb{R}^d,\\
u(0,x)=u_0(x),\quad & x\in\mathbb{R}^d,
\end{cases}
\end{equation*}
via the Fourier representation
$$  
u(t,x) = \cF^{-1}\left[ \exp\left(\int_0^t\psi(r,\cdot)\mathrm{d}r \right) \cF[u_0] \right].  
$$
Nevertheless, since we do not assume the symbol $\psi(t,\xi)$ is locally integrable near $t=0$, the time integral $\int_0^t\psi(r,\xi)\mathrm{d}r$ generally fails to make sense, even when interpreted as a tempered distribution. 
Therefore, incorporating non-zero initial data strictly requires imposing an extra temporal integrability condition on $\psi(t,\xi)$. 
This restriction remains essential even for second-order operators when their coefficients exhibit severe singularities near the initial time $t=0$ (see \cite[Remark 2.34]{KK 2026}). 
Indeed, as discussed in \cite[Remark 2.34]{KK 2026} for second-order equations with non-integrable time coefficients, extreme singularities near $t=0$ structurally prevent solutions from converging to arbitrary non-zero initial data $u_0$ in standard unweighted spaces.

In summary, our framework guarantees well-posedness for the inhomogeneous problem with zero initial data ($u(0,x)=0$); the resulting solution decays naturally under the given weighted norms (such as $L_{2,loc}(w(t)/t^2 dtdx)$). 
Conversely, admitting general non-zero initial data ($u_0 \neq 0$) degrades well-posedness unless $\psi(t,\xi)$ satisfies stronger integrability conditions as $t \to 0^+$.

\end{rem}

\mysection{Applications to PDEs (A toy model)}

We believe that our main theorem paves the way for many interesting new results, even within the context of classical second-order equations. However, to fully demonstrate the broad generality of our framework, we have chosen to present toy models featuring operators of an arbitrary real order. At the same time, we restrict our attention to isotropic operators to minimize unnecessary technical complexities. For the reader's convenience and to make the results in this section more readily accessible, we now recall equation \eqref{ab eqn 2}:
\begin{equation}
									\label{main eqn 2}
\begin{cases}
\partial_tu(t,x)= -t^\alpha \exp\left(  t^\beta \right) (-\Delta)^{\gamma/2} \exp\left( (-\Delta)^{\delta/2} \right)  u(t,x)+f(t,x),\quad &(t,x)\in(0,T)\times\mathbb{R}^d,\\
u(0,x)=0,\quad & x\in\mathbb{R}^d,
\end{cases}
\end{equation}
where
\begin{align*}
-t^\alpha \exp\left(  t^\beta \right) (-\Delta)^{\gamma/2} \exp\left( (-\Delta)^{\delta/2} \right)  u(t,x)
=\cF^{-1}\left[\psi(t,\cdot)\cF[u](t,\cdot)\right](x)
\end{align*}
and
$$
\psi(t,\xi)=-t^\alpha \exp\left( t^\beta \right) |\xi|^{\gamma} \exp\left( |\xi|^{\delta} \right). 
$$

\begin{thm}
									\label{main appl 1}
Let $\alpha, \beta, \gamma, \delta$ be real numbers, and suppose $f \in \mathrm{L}_{2,loc}\left( (0,T) \times \bR^d, w(t)\mathrm{d}t\mathrm{d}x \right)$, where $w$ is a weight satisfying Assumption \ref{weight as}. Then there exists a unique spectral-limit solution 
$$
u \in \mathrm{L}_{2,loc}\left( (0,T) \times \bR^d, w(t)\mathrm{d}t\mathrm{d}x \right)
$$
to equation \eqref{main eqn 2} such that \eqref{a priori est 2-2} and \eqref{a priori est 2-3} hold.
\end{thm}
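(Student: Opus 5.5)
Theorem \ref{main appl 1} follows from Theorem \ref{main thm} once we check its hypotheses for the explicit symbol
$$
\psi(t,\xi)=-t^\alpha \exp\left( t^\beta \right) |\xi|^{\gamma} \exp\left( |\xi|^{\delta} \right).
$$
Theorem \ref{main thm} needs only Assumptions \ref{weight as}, \ref{main as} and \ref{main as 2}. It then gives existence and uniqueness of a spectral-limit solution in $\mathrm{L}_{2,loc}\left((0,T)\times\bR^d,w(t)\mathrm{d}t\mathrm{d}x\right)$, together with the estimates \eqref{a priori est 2-2}--\eqref{a priori est 2-3} and the representation \eqref{unique solution}. Assumption \ref{weight as} is part of the hypotheses on $w$, so only the two conditions on $\psi$ need checking.

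The first step is measurability. The map $(t,\xi)\mapsto\psi(t,\xi)$ is continuous on $(0,T)\times(\bR^d\setminus\{0\})$, so it is jointly Borel measurable and defined almost everywhere, since $\{\xi=0\}$ is a null set. This matters when $\gamma<0$, where $|\xi|^\gamma$ is undefined at the origin, and also for the conventions $0^\gamma$, $0^\delta$ in general. Next, degenerate ellipticity (Assumption \ref{main as}) is immediate: for $t>0$ and $\xi\neq0$ the quantities $t^\alpha$, $\exp(t^\beta)$, $|\xi|^\gamma$ and $\exp(|\xi|^\delta)$ are all strictly positive, so $\psi$ is real and $\Re[\psi]=\psi<0$ almost everywhere. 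This holds for every real choice of $\alpha,\beta,\gamma,\delta$.

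The step requiring the most care, though still routine, is Assumption \ref{main as 2}. Fix $\xi\neq0$, which covers almost every $\xi$. Then $|\xi|^\gamma\exp(|\xi|^\delta)$ is a finite positive constant $c_\xi$, and
$$
\int_s^t|\psi(r,\xi)|\,\mathrm{d}r=c_\xi\int_s^t r^\alpha \exp(r^\beta)\,\mathrm{d}r .
$$
For $0<s<t<T$ the interval $[s,t]$ is a compact subset of $(0,\infty)$, even when $T=\infty$ because $t<T$. On this interval $r\mapsto r^\alpha\exp(r^\beta)$ is continuous, so the integral is finite. No singularity at $r=0$ ever enters, which is why arbitrary $\alpha,\beta$ are allowed. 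In the same way, arbitrary $\gamma,\delta$ are allowed because Theorem \ref{main thm} places no condition on the $\xi$-behaviour.

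Once these checks are done, we invoke Theorem \ref{main thm} directly. The only subtlety worth mentioning is that uniqueness is inherited from Theorem \ref{main thm}, within the class stated there, and that the estimates hold with the constant $N_w$ of the given weight.
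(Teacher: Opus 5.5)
Your proposal is correct and matches the paper's proof: both reduce the statement to Theorem~\ref{main thm} and check Assumption~\ref{main as} and Assumption~\ref{main as 2} for the explicit symbol, the latter via continuity of $r\mapsto r^\alpha\exp(r^\beta)$ on compact subintervals $[s,t]\subset(0,T)$. Your remarks on joint measurability, the null set $\{\xi=0\}$, and the case $T=\infty$ are harmless refinements that the paper leaves implicit.
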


\begin{proof}
It suffices to verify that the hypotheses of Theorem \ref{main thm} are satisfied for the symbol
$$
\psi(t,\xi)=-t^\alpha \exp\left( t^\beta \right) |\xi|^{\gamma} \exp\left( |\xi|^{\delta} \right).
$$
More precisely, we only need to confirm that Assumptions \ref{main as} and \ref{main as 2} hold. This verification is straightforward. First, the real part of the symbol trivially satisfies the required upper bound:
$$
\Re[\psi(t,\xi)] = \psi(t,\xi) = -t^\alpha \exp\left( t^\beta \right) |\xi|^{\gamma} \exp\left( |\xi|^{\delta} \right) \leq 0 \quad \text{for a.e. } (t,\xi) \in (0,T) \times \bR^d.
$$
Second, the local integrability condition,
$$
\int_s^t|\psi(r,\xi)| \mathrm{d}r < \infty \quad \text{for all } 0<s<t<T \text{ and a.e. } \xi \in \bR^d,
$$
is clearly met. 
This is because the temporal coefficient $r^\alpha \exp\left( r^\beta \right)$ is continuous on $(0,T)$ and can only locally blow up near the initial time $r=0$, regardless of whether $\alpha$ and $\beta$ are negative.

\end{proof}

We now turn our attention to the existence of a spatial Fourier weak solution to equation \eqref{main eqn 2}. 
To ensure that the integrals in the weak formulation are well-defined, 
we must restrict the range of certain exponents to guarantee local integrability.

While our previous analysis focused on a general quasi-decreasing weight $w$, establishing more regular solutions—such as spatial Fourier weak solutions and strong solutions—requires us to impose specific conditions on the weight. 
This restriction is mathematically natural; when the principal operators exhibit rapid singularities, strong solutions cannot exist unless they are appropriately compensated by a specialized weight.

\begin{thm}
									\label{main appl 2}
Let $\alpha \in (-1,\infty)$, $\beta=0$, $\gamma \in (-d/2,\infty)$, $\delta \in [0,\infty)$, and $\theta \in (-\infty, 3/2) \cap (-\infty,-\alpha]$. Define the weight
$$
w(t)=t^{2\alpha + 2\theta},
$$
and suppose $f \in \mathrm{L}_{2,loc}\left( (0,T) \times \bR^d, w(t)\mathrm{d}t\mathrm{d}x \right)$. 
Then there exists a spatial Fourier weak solution $u \in \mathrm{L}_{2,loc}\left( (0,T) \times \bR^d, w(t)\mathrm{d}t\mathrm{d}x \right)$ to equation \eqref{main eqn 2} satisfying \eqref{a priori est 2-2} and \eqref{a priori est 2-3}. It is unique among spatial Fourier weak solutions satisfying these two estimates.
\end{thm}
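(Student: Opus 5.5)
The plan is to derive the statement directly from Theorem \ref{main thm 2}, applied with the symbol $\psi(t,\xi)=-t^\alpha \mathrm{e}\,|\xi|^{\gamma}\exp(|\xi|^{\delta})$ (since $\beta=0$ gives $\exp(t^\beta)=\mathrm{e}$) and the power weight $w(t)=t^{2\alpha+2\theta}$. The only work is to check the hypotheses of that theorem: Assumption \ref{weight as} for $w$, and Assumptions \ref{main as} and \ref{main as 3} for $\psi$. Once these hold, Theorem \ref{main thm 2} gives existence of a spatial Fourier weak solution in $\mathrm{L}_{2,loc}((0,T)\times\bR^d,w(t)\mathrm{d}t\mathrm{d}x)$, the estimates \eqref{a priori est 2-2}--\eqref{a priori est 2-3}, and uniqueness within that class.

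\textbf{Step 1: the weight.} The function $w(t)=t^{2\alpha+2\theta}$ is positive on $(0,T)$. Because $\theta\le -\alpha$, the exponent satisfies $2\alpha+2\theta\le 0$, so $w$ is non-increasing. Hence Assumption \ref{weight as} holds with $N_w=1$.

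\textbf{Step 2: degenerate ellipticity.} The symbol is real-valued and non-positive, so $\Re[\psi]=\psi\le 0$ and Assumption \ref{main as} is immediate.

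\textbf{Step 3: Assumption \ref{main as 3}.} This is the only step with content. Fix $t\in(0,T)$ (finite even if $T=\infty$) and $R>0$. Since $\delta\ge 0$, we have $\exp(|\xi|^\delta)\le \mathrm{e}^{1\vee R^\delta}$ on $B_R$ (for $\delta=0$ this factor is just the constant $\mathrm{e}$), so it is harmless. By Tonelli, the integral factorizes into a time integral times a frequency integral.
\begin{itemize}
\item First term. We have
$$
\int_0^t\int_{B_R}|\psi|\,\mathrm{d}\xi\,\mathrm{d}s\lesssim \int_0^t s^\alpha\,\mathrm{d}s\int_{B_R}|\xi|^\gamma\,\mathrm{d}\xi .
$$
The time integral is finite because $\alpha>-1$. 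The frequency integral is finite because $\gamma>-d/2>-d$; integrability at infinity is irrelevant since the domain is $B_R$.
\item Second term. We have $|\psi|^2 s^2/w(s)\lesssim s^{2\alpha+2-2\alpha-2\theta}|\xi|^{2\gamma}=s^{2-2\theta}|\xi|^{2\gamma}$. The time part is integrable near $0$ precisely when $2-2\theta>-1$, i.e. $\theta<3/2$. The frequency part $|\xi|^{2\gamma}$ is integrable on $B_R$ precisely when $2\gamma>-d$, i.e. $\gamma>-d/2$.
\end{itemize}

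There is no genuine obstacle here. The only point needing care is the second term: it is exactly what forces the restrictions $\theta<3/2$ and $\gamma>-d/2$, while the quasi-decreasing requirement forces $\theta\le-\alpha$. Together these explain the hypotheses of the theorem. With all three assumptions verified, invoking Theorem \ref{main thm 2} completes the proof.
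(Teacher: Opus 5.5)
Your proposal is correct and matches the paper's proof. Both arguments reduce the statement to Theorem~\ref{main thm 2} by checking three things:
\begin{itemize}
\item $w(t)=t^{2\alpha+2\theta}$ is non-increasing because $\theta\le-\alpha$;
\item $\Re[\psi]=\psi\le 0$;
\item Assumption~\ref{main as 3} reduces to the finiteness of $\int_0^t s^{\alpha}\,\mathrm{d}s$, $\int_0^t s^{2-2\theta}\,\mathrm{d}s$ and $\int_{B_R}|\xi|^{2\gamma}\,\mathrm{d}\xi$, which gives $\alpha>-1$, $\theta<3/2$ and $\gamma>-d/2$, with $\delta\ge 0$ keeping $\exp(|\xi|^{\delta})$ bounded on $B_R$.
\end{itemize}

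One wording point: the uniqueness you inherit from Theorem~\ref{main thm 2} holds among spatial Fourier weak solutions satisfying \eqref{a priori est 2-2}--\eqref{a priori est 2-3}. It does not hold in all of $\mathrm{L}_{2,loc}((0,T)\times\bR^d,w(t)\mathrm{d}t\mathrm{d}x)$. That narrower uniqueness is exactly what the statement claims.
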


\begin{proof}
To apply Theorem \ref{main thm 2}, it suffices to verify its hypotheses under the specified parameter ranges. 
Since Assumption \ref{main as} is trivially satisfied, our task reduces to confirming Assumptions \ref{weight as} and \ref{main as 3}.

To confirm Assumption \ref{weight as}, we restrict the exponents so that the weight $w(t)$ is monotonically decreasing. 
This requires $\alpha + \theta \leq 0$. 

Next, we verify the strong local integrability condition of Assumption \ref{main as 3}:
\begin{align*}
&\int_0^t \int_{B_R} \left(|\psi(s,\xi)|+|\psi(s,\xi)|^2 \frac{s^2}{w(s)} \right)\mathrm{d}\xi \mathrm{d}s \\
&\simeq\int_0^t \int_{B_R} \left(s^\alpha  |\xi|^{\gamma} \exp\left( |\xi|^{\delta} \right)+ \left( |\xi|^{\gamma} \exp\left( |\xi|^{\delta} \right) \right)^2 s^{2-2\theta} \right)\mathrm{d}\xi \mathrm{d}s < \infty 
\end{align*}
for all $t \in (0,T)$ and $R \in (0,\infty)$. This integrability requirement naturally imposes restrictions on the exponents: we need $\gamma > -d/2$ for spatial integrability near the origin, $\delta \geq 0$ for appropriate exponential behavior, and $\alpha > -1$, $\theta < 3/2$ for temporal integrability near $s=0$. 

Combining these constraints yields $\gamma \in (-d/2,\infty)$, $\delta \in [0,\infty)$, and 
$\theta \in (-\infty, 3/2) \cap (-\infty,-\alpha]$. Under these conditions, Assumptions \ref{weight as} and \ref{main as 3} are fully satisfied, completing the proof.
\end{proof}

Finally, we turn our attention to establishing the existence of a unique strong solution to equation \eqref{main eqn 2}. 
In this context, we restrict our analysis to a finite time interval $(0,T)$. 
However, this restriction does not limit the scope of our theory; since $T>0$ can be chosen arbitrarily large, obtaining a unique strong solution on $(0,T)$ ultimately implies the existence of a unique global strong solution on the entire temporal interval $(0,\infty)$.

\begin{thm}
									\label{main appl 3}
Let $T \in (0,\infty)$, $\alpha\in (-1,0]$, $\beta=0$, $\gamma \in (-d/2,\infty)$, $\delta \in [0,\infty)$, and $\theta \in  (-\infty, 1/2) \cap (-\infty,-\alpha]$. 
Define the weight $w$ and the spectral function $\phi$ by
$$
w(t)=t^{2\alpha+2\theta}  \quad \text{and} \quad \phi(\xi)= T^\alpha |\xi|^{\gamma} \exp\left( |\xi|^{\delta} \right),
$$
respectively. 
For any $f \in \mathrm{L}_{2,loc}\left( (0,T) \times \bR^d, w(t)\mathrm{d}t\mathrm{d}x \right)$, equation \eqref{main eqn 2} admits a unique strong solution $u$ associated with the spectral function $\phi$ in the class
$$
\mathrm{L}_{\infty,2, loc}\left( (0,T) \times \bR^d ,\frac{w(t)}{t}\mathrm{d}t \mathrm{d}x\right) \cap \mathrm{L}_{2, loc}\left( (0,T) \times \bR^d,\frac{w(t)}{t^2}\mathrm{d}t\mathrm{d}x \right) \cap \mathrm{H}_{2,loc}^{\phi}\left( (0,T) \times \bR^d, w(t)\mathrm{d}t\mathrm{d}x \right).
$$
Furthermore, this solution satisfies the estimates \eqref{a priori est 2-2}, \eqref{a priori est 2-3}, \eqref{a priori est 00}, and \eqref{a priori est}.
\end{thm}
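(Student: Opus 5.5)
The plan is to obtain Theorem \ref{main appl 3} as a direct corollary of Theorem \ref{main thm 3}. With $\psi(t,\xi)=-t^\alpha|\xi|^\gamma\exp(|\xi|^\delta)$ (recall $\beta=0$, so $\exp(t^\beta)=e$ is a harmless constant that I suppress or absorb) and $w(t)=t^{2\alpha+2\theta}$, $\phi(\xi)=T^\alpha|\xi|^\gamma\exp(|\xi|^\delta)$, it suffices to check, in order, Assumptions \ref{weight as}, \ref{main as 3}, \ref{main as 4} and \ref{main as 5}. The conclusions, namely existence, uniqueness in the stated class and the estimates \eqref{a priori est 2-2}, \eqref{a priori est 2-3}, \eqref{a priori est 00}, \eqref{a priori est}, are then exactly those of Theorem \ref{main thm 3}.

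\textbf{Weight and local integrability.} For Assumption \ref{weight as}, since $\theta\le-\alpha$ the exponent $2\alpha+2\theta\le0$. Hence $w$ is positive and non-increasing on $(0,T)$, so $N_w=1$. For Assumption \ref{main as 3} I reuse the computation from the proof of Theorem \ref{main appl 2}. We have $|\psi(s,\xi)|^2 s^2/w(s)\simeq s^{2-2\theta}|\xi|^{2\gamma}\exp(2|\xi|^\delta)$ and $|\psi(s,\xi)|\simeq s^\alpha|\xi|^\gamma\exp(|\xi|^\delta)$. The temporal integrals over $(0,t)$ are finite because $\alpha>-1$ and $\theta<1/2<3/2$. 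The spatial integrals over $B_R$ are finite because $2\gamma>-d$ (hence also $\gamma>-d$) and the exponential factor is bounded on $B_R$ for $\delta\ge0$.

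\textbf{Ellipticity with respect to $\phi$.} Since $\alpha\le0$ and $0<t<T$, we have $t^\alpha\ge T^\alpha$, so
\[
\Re[\psi(t,\xi)]=-t^\alpha|\xi|^\gamma\exp(|\xi|^\delta)\le -T^\alpha|\xi|^\gamma\exp(|\xi|^\delta)=-\phi(\xi).
\]
This is where both the finiteness of $T$ and the restriction $\alpha\le0$ enter. For Assumption \ref{main as 5}, the ratio is independent of $\xi$: $|\psi(s,\xi)|/\phi(\xi)=(s/T)^\alpha$ wherever $\phi(\xi)\neq0$, that is, off the null set $\{0\}$, which is covered anyway by the convention $0/0:=0$. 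Therefore
\[
\int_0^t \esssup_{\xi}\Bigl(\frac{|\psi(s,\xi)|}{\phi(\xi)}\Bigr)^2\frac{1}{w(s)}\,\mathrm{d}s=T^{-2\alpha}\int_0^t s^{2\alpha-2\alpha-2\theta}\,\mathrm{d}s=T^{-2\alpha}\int_0^t s^{-2\theta}\,\mathrm{d}s<\infty
\]
precisely because $\theta<1/2$.

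\textbf{Main obstacle.} The only genuinely restrictive step is Assumption \ref{main as 5}. It is what forces the sharper range $\theta<1/2$ (instead of $\theta<3/2$ as in Theorem \ref{main appl 2}), and it requires a $t$-independent lower profile $\phi$ that matches $\psi$ exactly in its $\xi$-dependence. That matching is possible only through the choice $\phi=T^\alpha|\xi|^\gamma e^{|\xi|^\delta}$, which needs $T<\infty$ and $\alpha\le0$. Once this is verified, invoking Theorem \ref{main thm 3} completes the argument.
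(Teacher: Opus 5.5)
Your proposal is correct and follows the paper's proof. Both reduce to Theorem \ref{main thm 3} by checking four things: that $w$ is non-increasing, Assumption \ref{main as 3} via the computation from Theorem \ref{main appl 2}, the bound $\Re[\psi]\le-\phi$ from $t^\alpha\ge T^\alpha$, and the compatibility integral $\int_0^t s^{-2\theta}\,\mathrm{d}s<\infty$, which forces $\theta<1/2$. The factor $\exp(t^0)=e\ge1$ that you (and the paper) suppress is indeed harmless: it only strengthens $\Re[\psi]\le-\phi$ for the prescribed $\phi$ and multiplies the ratio $|\psi|/\phi$ by the constant $e$.
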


\begin{proof}
To invoke Theorem \ref{main thm 3}, we must demonstrate that Assumptions \ref{main as 3} and \ref{main as 5} hold under the stated hypotheses. This entails verifying four conditions: first, that the weight $w(t)$ is monotonically decreasing; second, that the symbol satisfies the ellipticity bound
\begin{align}
										\label{20260329 10}
\psi(t,\xi) \leq - \phi(\xi) \quad \text{for a.e. } (t,\xi) \in (0,T) \times \bR^d;
\end{align}
third, that the strong local integrability condition is satisfied,
\begin{align*}
\int_0^t \int_{B_R} |\psi(s,\xi)|^2 \frac{s^2}{w(s)} \mathrm{d}\xi \mathrm{d}s < \infty \quad \text{for all } t \in (0,T) \text{ and } R \in (0,\infty);
\end{align*}
and finally, that the symbol is compatible with the spectral function $\phi$, meaning
\begin{align}
										\label{20260327 01} 
\int_0^t \esssup_{\xi \in \bR^d} \left( \frac{|\psi(s,\xi)|}{\phi(\xi)} \right)^2 \frac{1}{w(s)} \mathrm{d}s < \infty \quad \text{for all } t \in (0,T).
\end{align}

Drawing on the proof of the preceding theorem, the requirements for a decreasing weight $w$ and the local integrability condition (Assumption \ref{main as 3}) immediately restrict our parameters to $\gamma \in (-d/2,\infty)$, $\delta \in [0,\infty)$, and $\theta \in (-\infty, 3/2) \cap (-\infty,-\alpha]$. 

For the ellipticity bound \eqref{20260329 10}, we note that since $\alpha \leq 0$, the temporal coefficient $t^\alpha$ is a non-increasing function on the interval $(0,T]$. Consequently, it attains its infimum at $t=T$, yielding the lower bound $T^\alpha $ and thereby validating \eqref{20260329 10}.

Lastly, we check the compatibility condition \eqref{20260327 01} by explicitly computing the integral:
\begin{align*}
\int_0^t \esssup_{\xi \in \bR^d} \left( \frac{|\psi(s,\xi)|}{\phi(\xi)} \right)^2 \frac{1}{w(s)} \mathrm{d}s 
&\lesssim \int_0^t s^{-2\theta} \mathrm{d}s < \infty.
\end{align*}
Convergence at the origin $s=0$ dictates that $-2\theta > -1$, which forces $\theta < 1/2$. Intersecting all of these necessary constraints yields the exact parameter ranges specified in the theorem, which completes the proof.
\end{proof}

\begin{rem}
Consider the case where $\gamma = 2$ and $\alpha=\beta=\delta=0$, under which equation \eqref{main eqn 2} simplifies to the classical heat equation. Even in this well-studied setting, our findings provide a novel contribution. 
In this case, the weight in Theorem \ref{main appl 3} is $w(t)=t^{2\theta}$. It belongs to the one-dimensional Muckenhoupt class $A_2(\bR)$ if and only if $\theta\in(-1/2,1/2)$. 
Consequently, the range $\theta\leq-1/2$ lies outside the classical $A_2$ regime and illustrates the strongly singular weights covered by our framework.

Conversely, our framework for highly singular weights does not capture the entirety of Muckenhoupt weights. 
For the classical heat equation, standard weighted $\mathrm{L}_2$-theory applies to $t^{2\theta}$ throughout $\theta\in(-1/2,1/2)$, whereas the monotonicity condition in our toy model with $\alpha=0$ restricts us to $\theta\leq0$. 
For comprehensive treatments of the heat equation and its generalizations involving temporal $A_2$-weights, we refer the reader to \cite{PS1,MV1,CJH KID 2023,DK2,DK3}.
\end{rem}

\begin{rem}

While Model \eqref{main eqn 2} focuses on isotropic operators for expositional clarity, our general framework (Theorems \ref{main thm}, \ref{main thm 2}, and \ref{main thm 3}) applies equally to complex-valued symbols possessing highly singular imaginary parts. For instance, consider the Cauchy problem driven by a singular advection-diffusion operator:
  $$
  \partial_t u(t,x) = -t^{\alpha}(-\Delta)^{\gamma/2}u(t,x) +  t^{-\beta} b \cdot \nabla u(t,x) + f(t,x),
  $$
where $b \in \mathbb{R}^d \setminus \{0\}$. Here, the corresponding symbol is given by $\psi(t,\xi) = -t^\alpha \vert{}\xi\vert{}^\gamma + it^{-\beta}(b \cdot \xi)$. 
Since $\Re[\psi(t,\xi)] = -t^\alpha \vert{}\xi\vert{}^\gamma \le 0$, Assumption \ref{main as} holds identically regardless of the severe temporal blow-up (for any $\beta>0$) or high-frequency oscillations induced by the purely imaginary transport term $\Im[\psi(t,\xi)] = t^{-\beta}(b \cdot \xi)$ near $t=0$. 
Thus, the presence of severe temporal oscillations or singular transport terms does not disrupt the well-posedness of spectral-limit solutions.
\end{rem}

\mysection{A priori estimates}

Deriving \textit{a priori} estimates, frequently alongside the method of continuity, is a standard and powerful technique for establishing the well-posedness of PDEs. 
However, because the symbols in our operators are unbounded, the method of continuity is inapplicable. 
Furthermore, while the traditional approach assumes a solution $u$ exists before deriving its bounds, we adopt a different strategy. 
We explicitly construct a candidate solution to \eqref{ab eqn} and establish estimates directly for this specific function. 
Since uniqueness holds within our designated function classes, this strategy is conceptually equivalent to classical well-posedness arguments based on standard \textit{a priori} estimates and the method of continuity.
 Consequently, we continue to refer to our derived bounds as ``\textit{a priori} estimates," despite them originating from a concrete candidate. 
Specifically, we define our candidate solution to \eqref{ab eqn} as
\begin{align}
										\label{u defn}
u(t,x)=  \cF^{-1}\left[ \int_0^t  \exp\left(\int_s^t\psi(r,\cdot)\mathrm{d}r \right) \cF[f(s,\cdot)]\mathrm{d}s \right](x)
\end{align}
and dedicate the remainder of this section to establishing fundamental estimates for this explicitly defined $u$.

In other words, we derive \textit{a priori} estimates for the function $u$ in \eqref{u defn}, provisionally treating it as a solution to \eqref{ab eqn}. 
The rigorous justification of $u$ as a solution will be provided in the next section.
These \textit{a priori} estimates require no upper bound on $\psi(t,\xi)$; this fact motivated our main theorem.

\begin{lem}
							\label{l2 bounded lemma 1}
Assume that $f \in \mathrm{L}_{2,loc}\left( (0,T) \times \mathbb{R}^d \right)$ and let $u$ be given by \eqref{u defn}. 
Under the conditions specified in Assumptions \ref{main as} and \ref{main as 2}, 
the following inequalities hold for almost every $t \in (0,T)$:
\begin{align}
										\label{20260321 01}
\|u(t,\cdot)\|^2_{\mathrm{L}_2(\mathbb{R}^d)} 
\leq \int_{\mathbb{R}^d} \left|\int_0^t |\mathcal{F}[f(s,\cdot)](\xi)| \mathrm{d}s \right|^2 \mathrm{d}\xi,
\end{align}
\begin{align}
										\label{20260321 01-2}
\esssup_{s \in [0,t]} \left( \|u(s,\cdot)\|^2_{\mathrm{L}_2(\bR^d)} \frac{1}{s} \right)
\leq  \int_0^{t} \int_{\bR^d} |f(s,x)|^2   \mathrm{d}x\mathrm{d}s,
\end{align}
and
\begin{align}
										\label{20260321 02}
\int_0^t \|u(s,\cdot)\|^2_{\mathrm{L}_2(\bR^d)} \frac{1}{s^2} \mathrm{d}s
\leq   4\int_0^t \int_{\bR^d}  \left|f(s,x)\right|^2 \mathrm{d}x \mathrm{d}s.
\end{align}
\end{lem}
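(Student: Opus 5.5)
The plan is to work entirely on the Fourier side. By Plancherel, $\|u(t,\cdot)\|_{\mathrm{L}_2}=\|\hat u(t,\cdot)\|_{\mathrm{L}_2}$ with
$$
\hat u(t,\xi)=\int_0^t \exp\Bigl(\int_s^t\psi(r,\xi)\mathrm{d}r\Bigr)\hat f(s,\xi)\mathrm{d}s,\qquad \hat f(s,\xi):=\mathcal{F}[f(s,\cdot)](\xi).
$$
First I check that this is well defined. By Assumption \ref{main as 2}, $\int_s^t\psi(r,\xi)\mathrm{d}r$ is finite for a.e. $\xi$ and all $0<s<t<T$. By Assumption \ref{main as}, its real part is nonpositive, so
$$
\Bigl|\exp\Bigl(\int_s^t\psi(r,\xi)\mathrm{d}r\Bigr)\Bigr|=\exp\Bigl(\int_s^t\Re[\psi(r,\xi)]\mathrm{d}r\Bigr)\le 1 .
$$
Hence
$$
|\hat u(t,\xi)|\le \int_0^t|\hat f(s,\xi)|\mathrm{d}s=:F(t,\xi).
$$
Joint measurability of the kernel in $(s,t,\xi)$ follows from Fubini, since $\int_s^t\psi\,\mathrm{d}r$ is a difference of measurable primitives. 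Squaring and integrating in $\xi$ gives \eqref{20260321 01} at once. The right-hand side is finite for a.e. $t$ because, by Cauchy--Schwarz, $F(t,\xi)^2\le t\int_0^t|\hat f(s,\xi)|^2\mathrm{d}s$, and Plancherel then makes the $\xi$-integral finite since $f\in \mathrm{L}_{2,loc}$. This also shows that $\hat u(t,\cdot)\in \mathrm{L}_2$, so $u(t,\cdot)$ is a genuine $\mathrm{L}_2$ function.

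For \eqref{20260321 01-2}, I combine the same Cauchy--Schwarz bound with \eqref{20260321 01}:
$$
\|u(s,\cdot)\|_{\mathrm{L}_2}^2\le s\int_0^s\!\int_{\mathbb{R}^d}|\hat f(r,\xi)|^2\mathrm{d}\xi\,\mathrm{d}r\le s\int_0^t\!\int_{\mathbb{R}^d}|f(r,x)|^2\mathrm{d}x\,\mathrm{d}r
$$
for a.e. $s\le t$. Dividing by $s$ and taking the essential supremum over $s\in(0,t]$ gives the claim.

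For \eqref{20260321 02}, I use Hardy's inequality in the time variable, applied for each fixed $\xi$:
$$
\int_0^t\Bigl(\frac1s\int_0^s|\hat f(r,\xi)|\mathrm{d}r\Bigr)^2\mathrm{d}s\le 4\int_0^t|\hat f(s,\xi)|^2\mathrm{d}s .
$$
This is the classical $p=2$ Hardy inequality on $(0,t)$, with sharp constant $(p/(p-1))^p=4$. Integrating it in $\xi$ and applying Tonelli, the pointwise bound $|\hat u(s,\xi)|\le F(s,\xi)$, and Plancherel yields \eqref{20260321 02}. If a self-contained argument is preferred, I would prove Hardy's inequality via the Minkowski integral inequality: substitute $r=s\sigma$, so that $\frac1s\int_0^s g=\int_0^1 g(s\sigma)\mathrm{d}\sigma$ and $\|g(\cdot\,\sigma)\|_{\mathrm{L}_2(0,t)}\le\sigma^{-1/2}\|g\|_{\mathrm{L}_2(0,t)}$, which gives the factor $(\int_0^1\sigma^{-1/2}\mathrm{d}\sigma)^2=4$. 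This fits the paper's stated toolbox.

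The main obstacle is not any single estimate but the measure-theoretic bookkeeping. I need measurability of $(s,t,\xi)\mapsto \exp(\int_s^t\psi)\hat f(s,\xi)$ so that Tonelli and Fubini apply, and I need to track the a.e.-in-$t$ qualifiers when passing from the bounds on $\hat u$ to the bounds on $u$. Each estimate itself follows directly from the bound $|\exp(\int_s^t\psi)|\le1$.
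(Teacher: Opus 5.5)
Your proposal is correct and follows the paper's proof essentially step for step. You use the pointwise bound $\bigl|\exp\bigl(\int_s^t\psi(r,\xi)\,\mathrm{d}r\bigr)\bigr|\le 1$ with Plancherel for \eqref{20260321 01}, Cauchy--Schwarz (the paper's Jensen step) for \eqref{20260321 01-2}, and the $p=2$ Hardy inequality in time at each fixed $\xi$ followed by Tonelli and Plancherel for \eqref{20260321 02}. Your measurability remarks and the Minkowski proof of Hardy's constant $4$ only fill in details the paper leaves implicit, with one small correction. The primitives of $\psi(\cdot,\xi)$ should be based at an interior point $c\in(0,T)$ rather than at $0$, since Assumption \ref{main as 2} gives no integrability near $t=0$.
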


\begin{proof}

We begin by deriving the initial bound, \eqref{20260321 01}. 
By employing Plancherel's theorem and utilizing the fact that the real part of the symbol is non-positive,
 we deduce
 $$
\|u(t,\cdot)\|^2_{\mathrm{L}_2(\mathbb{R}^d)} \leq \int_{\mathbb{R}^d} \left|\int_0^t |\mathcal{F}[f(s,\cdot)](\xi)| \mathrm{d}s \right|^2 \mathrm{d}\xi.
 $$
 This establishes the first bound. 
 Furthermore, applying Jensen's inequality to \eqref{20260321 01} yields
\begin{align}
										\label{20260405 01}
\|u(t,\cdot)\|^2_{\mathrm{L}_2(\mathbb{R}^d)} 
\leq \int_{\mathbb{R}^d} \left|\int_0^t |\mathcal{F}[f(s,\cdot)](\xi)| \mathrm{d}s \right|^2 \mathrm{d}\xi 
\leq t\int_0^t \int_{\mathbb{R}^d} |f(s,x)|^2 \mathrm{d}x \mathrm{d}s.
\end{align}
 The second estimate, \eqref{20260321 01-2}, follows immediately by dividing \eqref{20260405 01} by $t$ and taking the essential supremum. 
 To deduce the final inequality, we multiply the first two terms of \eqref{20260405 01} by $t^{-2}$, which results in
 $$
 \frac{1}{t^2}\|u(t,\cdot)\|^2_{\mathrm{L}_2(\mathbb{R}^d)} \leq \int_{\mathbb{R}^d} \left|\frac{1}{t}\int_0^t |\mathcal{F}[f(s,\cdot)](\xi)| \mathrm{d}s \right|^2 \mathrm{d}\xi.
 $$
 Integrating this expression with respect to the temporal variable, we find
 $$
 \int_0^{T'} \frac{1}{t^2}\|u(t,\cdot)\|^2_{\mathrm{L}_2(\mathbb{R}^d)} \mathrm{d}t \leq \int_0^{T'}\int_{\mathbb{R}^d} \left|\frac{1}{t}\int_0^t |\mathcal{F}[f(s,\cdot)](\xi)| \mathrm{d}s \right|^2 \mathrm{d}\xi\mathrm{d}t~ \quad \forall T' \in (0,T).
 $$
 Finally, invoking Tonelli's theorem, the classical Hardy inequality, and Plancherel's theorem completes the proof, yielding \eqref{20260321 02}.
\end{proof}

It is straightforward to extend the preceding lemma to cases involving a quasi-decreasing weight $w$.

\begin{corollary}
							\label{l2 bounded corollary 1}
Let $f \in \mathrm{L}_{2,loc}\left( (0,T) \times \mathbb{R}^d, w(t)\mathrm{d}t\mathrm{d}x \right)$, and let $w$ be a weight function that satisfies Assumption \ref{weight as}. 
Given a function $u$ defined by \eqref{u defn}, and assuming that Assumptions \ref{main as} and \ref{main as 2} hold, the following estimates are valid for almost every $t \in (0,T)$:
\begin{align*}
\|u(t,\cdot)\|^2_{\mathrm{L}_2(\mathbb{R}^d)}  
\leq \int_{\mathbb{R}^d} \left|\int_0^t |\mathcal{F}[f(s,\cdot)](\xi)| \mathrm{d}s \right|^2 \mathrm{d}\xi,
\end{align*}
\begin{align}
										\label{20260321 03-2}
\esssup_{s \in [0,t]} \left( \|u(s,\cdot)\|^2_{\mathrm{L}_2(\bR^d)} \frac{w(s)}{s} \right)
\leq  N_w\int_0^{t} \int_{\bR^d} |f(s,x)|^2  \mathrm{d}x  w(s)  \mathrm{d}s,
\end{align}
and
\begin{align}
									\label{20260321 04}
\int_0^{t} \|u(s,\cdot)\|^2_{\mathrm{L}_2(\bR^d)}\frac{w(s)}{s^2}  \mathrm{d}s
&\leq 4 N_w\int_0^{t} \int_{\bR^d} \left|f(s,x) \right|^2 \mathrm{d}x w(s)\mathrm{d}s.
\end{align}
\end{corollary}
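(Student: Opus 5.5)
The plan is to reduce Corollary \ref{l2 bounded corollary 1} to Lemma \ref{l2 bounded lemma 1}, using only the quasi-decreasing property of $w$. The first inequality involves no weight and is literally \eqref{20260321 01}. There is one preliminary point: Lemma \ref{l2 bounded lemma 1} assumes $f \in \mathrm{L}_{2,loc}((0,T)\times\bR^d)$. Since $w(s) \ge N_w^{-1} w(t')$ for a.e.\ $s<t'$, and $w(t')>0$ for some $t'$ arbitrarily close to any $T'<T$, weighted local integrability on $(0,T')$ implies unweighted local integrability on $(0,T')$. Hence $u$ is well defined and the lemma applies. On each such interval, all manipulations (Plancherel, Tonelli) are justified exactly as in the lemma.

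For \eqref{20260321 03-2}, fix $t$ and a.e.\ $s \in (0,t]$. From \eqref{20260405 01} applied at time $s$,
\[
\|u(s,\cdot)\|_{\mathrm{L}_2}^2 \frac{w(s)}{s} \le w(s)\int_0^s \int_{\bR^d}|f(r,x)|^2\,\mathrm{d}x\,\mathrm{d}r \le N_w \int_0^s\int_{\bR^d}|f(r,x)|^2 w(r)\,\mathrm{d}x\,\mathrm{d}r,
\]
where the second step uses $w(s)\le N_w w(r)$ for a.e.\ $r<s$. The right-hand side is at most the same integral over $(0,t)$, and taking the essential supremum over $s\in[0,t]$ gives the claim. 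One must take care that the exceptional null sets in $(r,s)$ are handled. Since $\{(r,s): r<s,\ w(s)>N_w w(r)\}$ is null in $(0,T)^2$, Fubini shows that for a.e.\ $s$ the inequality holds for a.e.\ $r<s$, which suffices.

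For \eqref{20260321 04}, the obstacle is that the Hardy inequality in the lemma is unweighted, so one cannot simply pull $w(s)$ inside. The approach is to repeat the Hardy step with the weight absorbed pointwise. Start from
\[
\frac{w(t)}{t^2}\|u(t,\cdot)\|^2_{\mathrm{L}_2} \le \int_{\bR^d}\Bigl|\frac1t\int_0^t \sqrt{w(t)}\,|\mathcal{F}[f(s,\cdot)](\xi)|\,\mathrm{d}s\Bigr|^2\mathrm{d}\xi \le N_w\int_{\bR^d}\Bigl|\frac1t\int_0^t g(s,\xi)\,\mathrm{d}s\Bigr|^2\mathrm{d}\xi,
\]
with $g(s,\xi):=\sqrt{w(s)}\,|\mathcal{F}[f(s,\cdot)](\xi)|$, again using $w(t)\le N_w w(s)$ for a.e.\ $s<t$. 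Integrate in $t$ over $(0,t_0)$ and apply Tonelli. Then apply the classical Hardy inequality $\int_0^{t_0} |t^{-1}\int_0^t g|^2\,\mathrm{d}t \le 4\int_0^{t_0} g^2$ for each fixed $\xi$; restricting $g$ to $(0,t_0)$ makes this the standard inequality on $(0,\infty)$. Finally, Plancherel in $\xi$ turns $\int\int g^2$ into $\int_0^{t_0}\int|f|^2 w$, which yields the constant $4N_w$. The only delicate point is the measure-zero bookkeeping for the quasi-decreasing inequality, handled by the same Fubini argument as above.
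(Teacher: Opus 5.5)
Your proposal is correct and follows the paper's proof essentially step for step. You use the inclusion of the weighted into the unweighted local $\mathrm{L}_2$-space to invoke Lemma \ref{l2 bounded lemma 1}, multiply \eqref{20260405 01} by $w/t$ and use $w(t)\le N_w w(s)$ for the supremum bound, and move $\sqrt{w(t)}$ inside the time average as $\sqrt{N_w w(s)}$ before applying Hardy's inequality (constant $4$) and Plancherel; your Fubini bookkeeping for the two-variable null set in the quasi-decreasing condition is a detail the paper leaves implicit.
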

\begin{proof}
Due to Assumption \ref{weight as}, the unweighted locally square-integrable space contains the weighted one:
$$
\mathrm{L}_{2,loc}\left( (0,T) \times \mathbb{R}^d, w(t)\mathrm{d}t\mathrm{d}x \right) \subset \mathrm{L}_{2,loc}\left( (0,T) \times \mathbb{R}^d \right).
$$
As a result, the bound from \eqref{20260321 01} derived in the previous lemma remains applicable. 
By multiplying \eqref{20260321 01} by $w(t)$ and utilizing both Jensen's inequality and Assumption \ref{weight as}, we find that for almost every $t \in (0,T)$,
\begin{align}
										\label{20260405 10}
\frac{w(t)}{t} \|u(t,\cdot)\|^2_{\mathrm{L}_2(\mathbb{R}^d)} 
&\leq t w(t)\int_{\mathbb{R}^d} \left| \frac{1}{t}\int_0^t |\mathcal{F}[f(s,\cdot)](\xi)| \mathrm{d}s \right|^2 \mathrm{d}\xi\\
										\notag
&\leq w(t)\int_0^t \int_{\mathbb{R}^d} |f(s,x)|^2 \mathrm{d}x \mathrm{d}s  
\leq N_w \int_0^t \int_{\mathbb{R}^d} |f(s,x)|^2 w(s) \mathrm{d}x \mathrm{d}s.
\end{align}
This establishes \eqref{20260321 03-2}. 
For the final estimate, we multiply \eqref{20260405 10} by $1/t$ and integrate over time on the interval $(0, T')$. 
Utilizing Assumption \ref{weight as} once more gives us the following for all $T' \in (0,T)$:
$$
\int_0^{T'} \frac{w(t)}{t^2} \|u(t,\cdot)\|^2_{\mathrm{L}_2(\mathbb{R}^d)} \mathrm{d}t 
\leq N_w \int_0^{T'} \int_{\mathbb{R}^d} \left| \frac{1}{t}\int_0^t |\mathcal{F}[f(s,\cdot)](\xi)| \sqrt{w(s)}\mathrm{d}s \right|^2 \mathrm{d}\xi \mathrm{d}t.
$$
Finally, combining Fubini's theorem, the classical Hardy inequality, and Plancherel's theorem yields the required bound in \eqref{20260321 04}.
\end{proof}

\begin{rem}
										\label{extra weight rem}
Lemma \ref{l2 bounded lemma 1} and Corollary \ref{l2 bounded corollary 1} are equivalent: the lemma implies the corollary, and setting $w(t)=1$ for $t \in (0,T)$ in the corollary recovers the lemma. 
This reflects classical weighted extrapolation, where bounding a Calderón-Zygmund operator $\cA$ for a single Muckenhoupt $A_2(\bR^d)$ weight (including the unweighted $w=1$ case) guarantees its boundedness for all $A_2(\bR^d)$ weights (\textit{cf.}\cite{Hy1,GR1,CMP1}).
In our context, if a weight $w$ under Assumption \ref{weight as} is bounded above by a positive constant $\kappa$, such that $w(t) \leq \kappa$ for almost all $t \in (0,T)$, then only the following inclusion is immediate:
$$
\mathrm{L}_{2,loc}\left( (0,T) \times \bR^d \right)
= \mathrm{L}_{2,loc}\left( (0,T) \times \bR^d, w(t)\mathrm{d}t\mathrm{d}x \right).
$$
Thus, a structural similarity to extrapolation theorems with Muckenhoupt weights becomes apparent. 
However, such an extrapolation argument is not expected to apply directly to our class of weights.
\end{rem}

We now establish our primary estimate, demonstrating a regularity gain for the solution derived from the equation. 
This estimate is controlled by the spectral function $\phi$, which ensures the (degenerate) ellipticity condition is satisfied. 
We want to emphasize the strength of this result: despite being derived using only elementary tools, the estimate is remarkably powerful due to the inclusion of a very general weight.

\begin{lem}
							\label{l2 bounded lemma}
Let $w$ be a weight satisfying Assumption \ref{weight as}, and let $f \in \mathrm{L}_{2,loc}\left( (0,T) \times \bR^d, w(t)\mathrm{d}t\mathrm{d}x \right)$. Suppose $u$ is given by \eqref{u defn}. If Assumptions \ref{main as 2} and \ref{main as 4} hold, then for any $T' \in (0,T)$, we have the following maximal regularity estimate:
\begin{align}
									\label{maximal regularity estimate}
\int_0^{T'} \int_{\bR^d} \left|\phi(-\mathrm{i}\nabla)u(t,x)\right|^2 w(t) \mathrm{d}x\mathrm{d}t \leq  N_w \int_0^{T'} \int_{\bR^d}|f(t,x)|^2 w(t) \mathrm{d}x\mathrm{d}t.
\end{align}
\end{lem}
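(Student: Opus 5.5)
By Plancherel's theorem and Tonelli's theorem, the left-hand side of \eqref{maximal regularity estimate} equals
$$
\int_{\bR^d} \int_0^{T'} \phi(\xi)^2 \left| \int_0^t \exp\left(\int_s^t\psi(r,\xi)\mathrm{d}r\right) \cF[f(s,\cdot)](\xi)\mathrm{d}s\right|^2 w(t)\mathrm{d}t\,\mathrm{d}\xi .
$$
The plan is to fix a frequency $\xi$ outside a null set and prove the scalar inequality
$$
\int_0^{T'} \phi(\xi)^2 \left|\int_0^t e^{\int_s^t \psi(r,\xi)dr} g(s)\mathrm{d}s\right|^2 w(t)\mathrm{d}t \le N_w \int_0^{T'} |g(s)|^2 w(s)\mathrm{d}s
$$
for $g(s)=\cF[f(s,\cdot)](\xi)$. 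Integrating this bound in $\xi$ and applying Plancherel once more then gives the lemma. Assumption \ref{main as 2} guarantees that the inner exponent is finite for $0<s<t<T'$ and a.e. $\xi$. Assumption \ref{main as 4} gives $|e^{\int_s^t\psi}|\le e^{-(t-s)\phi(\xi)}$. If $\phi(\xi)=0$, the integrand vanishes and there is nothing to prove, so we may assume $\lambda:=\phi(\xi)>0$.

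The key step is a weighted Young/Schur-type bound for the convolution with kernel $K(t-s)=\lambda e^{-\lambda(t-s)}\mathbf 1_{s<t}$, which has $\int_0^\infty K=1$. We insert the weight by writing $|g(s)|=|g(s)|\sqrt{w(s)}\cdot w(s)^{-1/2}$ and, following Corollary \ref{l2 bounded corollary 1}, use Assumption \ref{weight as} in the form $w(t)\le N_w w(s)$ for $s<t$. Applying Cauchy--Schwarz with respect to the probability-like measure $K(t-s)\mathrm{d}s$ (whose mass is $\le 1$), we get
$$
w(t)\left|\int_0^t K(t-s)|g(s)|\mathrm{d}s\right|^2 \le \int_0^t K(t-s)\mathrm{d}s\int_0^t K(t-s)|g(s)|^2 w(t)\mathrm{d}s\le N_w\int_0^t K(t-s)|g(s)|^2w(s)\mathrm{d}s .
$$
Integrating over $t\in(0,T')$ and using Tonelli together with $\int_s^{T'}K(t-s)\mathrm{d}t\le 1$ yields the scalar inequality with constant exactly $N_w$.

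The main obstacle is bookkeeping rather than analysis. We must justify the measurability of the integrands and ensure that $\exp(\int_s^t\psi)$ is well defined for a.e. $\xi$. This is the point where Assumption \ref{main as 2} enters, and there we need to choose a common null set in $\xi$ valid for all $s<t$, which follows because the integrability holds for all $0<s<t<T$ once $\xi$ is fixed. We also need Plancherel to be legitimate. Since $f(t,\cdot)\in L_2$ for a.e. $t$ (because $w$ is quasi-decreasing, the weighted space embeds locally into the unweighted one), Lemma \ref{l2 bounded lemma 1} shows that $u(t,\cdot)\in L_2$. The identity for $\phi(-\mathrm{i}\nabla)u$ is then read on the Fourier side, as in Definition \ref{phi space}, where finiteness of the right-hand side is exactly what the estimate provides.
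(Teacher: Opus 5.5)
Your proof is correct and follows the paper's argument. Plancherel reduces the estimate to a fixed frequency, Assumption \ref{main as 4} bounds the propagator by $e^{-(t-s)\phi(\xi)}$, the quasi-decreasing property moves $w(t)$ onto $w(s)$, and $\int_0^\infty \phi(\xi)e^{-s\phi(\xi)}\,\mathrm{d}s\le 1$ yields the constant $N_w$. The only difference is cosmetic: you prove the weighted Young-type bound by Cauchy--Schwarz against the kernel (a Schur test), whereas the paper substitutes $s\mapsto t-s$ and applies the generalized Minkowski inequality.
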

\begin{proof}
Fix an arbitrary $T' \in (0,T)$. We first observe the following bounds for all $0<s<t<T$ and $\xi \in \bR^d$:
$$
\exp\left( -s\phi(\xi) \right) \leq 1 \quad \text{and} \quad w(t) \leq N_w w(t-s),
$$
where the second inequality is a direct consequence of Assumption \ref{weight as}.
Applying Tonelli's theorem, Plancherel's theorem, and the generalized Minkowski inequality, we can bound the norm as follows:
\begin{align*}
&\int_0^{T'} \int_{\bR^d} \left|\phi(-\mathrm{i}\nabla)u(t,x)\right|^2 w(t) \mathrm{d}x\mathrm{d}t \\
&= \int_0^{T'} \int_{\bR^d} \left|\int_0^t \exp\left( \int_s^t\psi(r,\xi) \mathrm{d}r \right) \phi(\xi) \cF[f(s,\cdot)](\xi) \mathrm{d}s\right|^2  w(t)\mathrm{d}\xi \mathrm{d}t \\
&\leq \int_{\bR^d} \int_0^{T'}  \left|\int_0^t \phi(\xi) \exp\left( -(t-s) \phi(\xi) \right)  |\cF[f(s,\cdot)](\xi)| \mathrm{d}s\right|^2 w(t) \mathrm{d}t \mathrm{d}\xi \\
&\leq N_w \int_{\bR^d}   \left(\int^\infty_{0}  \phi(\xi) \exp\left( -s \phi(\xi) \right) \left(\int_{0}^{T'}  1_{0<t-s}|\cF[f(t-s,\cdot)](\xi)|^2 w(t-s) \mathrm{d}t\right)^{1/2} \mathrm{d}s \right)^2 \mathrm{d}\xi \\
&\leq N_w \int_{\bR^d}   \left(\int^\infty_{0}  \phi(\xi) \exp\left( -s \phi(\xi) \right) \left(\int_{0}^{T'} |\cF[f(t,\cdot)](\xi)|^2 w(t) \mathrm{d}t\right)^{1/2} \mathrm{d}s \right)^2 \mathrm{d}\xi  \\
&\leq N_w \int_{\bR^d}   \left(\int^\infty_{0}  \phi(\xi) \exp\left( -s \phi(\xi) \right)  \mathrm{d}s \right)^2 
\left(\int_{0}^{T'}  |\cF[f(t,\cdot)](\xi)|^2 w(t) \mathrm{d}t \right) \mathrm{d}\xi \\
&\leq N_w \int_0^{T'}\int_{\bR^d}|f(t,x)|^2  w(t)  \mathrm{d}x \mathrm{d}t.
\end{align*}
This establishes the desired estimate and concludes the proof.
\end{proof}

\begin{rem}
									\label{phi zero lemma}
In the proof of the preceding lemma, it might appear that strict positivity is required to establish the bound:
$$
\int^\infty_{0}  \phi(\xi) \exp\left( -s \phi(\xi) \right)  \mathrm{d}s \leq 1.
$$
If $\phi(\xi) = 0$, the integrand vanishes identically, so the bound holds trivially at that frequency.
 More precisely, by examining the terms prior to applying Plancherel's theorem, we obtain the following inequality for this $\xi$:
$$
\int_0^{T'} \left|\int_0^t \exp\left( \int_s^t\psi(r,\xi) \mathrm{d}r \right) \phi(\xi) \cF[f(s,\cdot)](\xi) \mathrm{d}s\right|^2  w(t) \mathrm{d}t \leq N_w \left(\int_{0}^{T'}  |\cF[f(t,\cdot)](\xi)|^2 w(t) \mathrm{d}t \right).
$$
This directly yields \eqref{maximal regularity estimate}.
 In other words, the maximal regularity estimate \eqref{maximal regularity estimate} remains valid even if $\phi$ is degenerate. 
Therefore, we could relax the condition in Assumption \ref{main as 5} by dropping the strict positivity requirement, permitting $\phi(\xi) = 0$ (and consequently $\psi(t,\xi) = 0$) for $\xi \in \bR^d$.
\end{rem}

\mysection{Existence and uniqueness of solutions}
											\label{ex uni section}

The solution concepts introduced in this paper—strong, spatial Fourier weak, and spectral-limit solutions—are closely interconnected under suitable conditions. 
Consequently, well-posedness could theoretically be established using any of these frameworks. 
We intentionally choose the spatial Fourier weak solution as our primary setup; this strategic choice enables us to build upon our prior results, significantly simplifying and shortening the proofs that follow.

We first establish the uniqueness principle needed below directly in the present
$\mathrm{L}_2$ framework.

\begin{thm}
										\label{uniqueness theorem}
Let $f \in \mathrm{L}_{2,loc}\left( (0,T) \times \bR^d \right)$. 
Assume that 
\begin{equation}
										\label{20260811 01}
\begin{aligned}
&\int_0^t \int_{B_R} |\psi(s,\xi)|\mathrm{d}\xi \mathrm{d}s < \infty
\quad \text{for all } t \in (0,T) \text{ and } R > 0,\\
&\psi(s,\cdot)\in \mathrm{L}_{2,loc}(\bR^d)
\quad \text{for a.e. }s\in(0,T).
\end{aligned}
\end{equation}
Then there is at most one spatial Fourier weak solution $u$ to \eqref{ab eqn} in the space
$$
 \mathrm{L}_{2,loc}\left( (0,T) \times \bR^d \right) \cap \mathrm{H}_{1,2,loc}^{\psi}\left( (0,T) \times \bR^d\right),
$$
where the definition of $\mathrm{H}_{1,2,loc}^{\psi}\left( (0,T) \times \bR^d\right)$ is given in \eqref{psi space}.
\end{thm}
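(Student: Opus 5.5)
By linearity, the difference $v=u_1-u_2$ of two spatial Fourier weak solutions in the stated class is a spatial Fourier weak solution of \eqref{ab eqn} with $f\equiv 0$. It still lies in $\mathrm{L}_{2,loc}((0,T)\times\bR^d)\cap \mathrm{H}_{1,2,loc}^{\psi}((0,T)\times\bR^d)$. The plan is to pass to the frequency side and show that $\hat v(t,\xi):=\mathcal{F}[v(t,\cdot)](\xi)$ satisfies, for a.e.\ $\xi$, the scalar integral equation
$$
\hat v(t,\xi)=\int_0^t \psi(s,\xi)\hat v(s,\xi)\,\mathrm{d}s \quad \text{for a.e. } t .
$$
Since $\Re\psi\le 0$ is \emph{not} assumed here, we cannot use a dissipativity argument. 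Instead we use only the integrability of $s\mapsto \psi(s,\xi)\hat v(s,\xi)$ near $0$ together with a Gronwall-type argument for linear scalar ODEs.

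\textbf{Step 1 (localize the weak formulation in frequency).} Fix $\varphi\in\mathcal{F}^{-1}\mathrm{C}_c^\infty(\bR^d)$ and write $\hat\varphi=\mathcal{F}[\varphi]$. By Plancherel, \eqref{weak formulation} with $f=0$ becomes
$$
\int \hat v(t,\xi)\overline{\hat\varphi(\xi)}\,\mathrm{d}\xi=\int_0^t\int \psi(s,\xi)\hat v(s,\xi)\overline{\hat\varphi(\xi)}\,\mathrm{d}\xi\,\mathrm{d}s .
$$
The inner pairing is legitimate because $\psi(s,\cdot)\hat\varphi\in \mathrm{L}_2$ for a.e.\ $s$, by the second condition in \eqref{20260811 01}. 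The membership $v\in\mathrm{H}^{\psi}_{1,2,loc}$ means exactly that $G(s,\xi):=\psi(s,\xi)\hat v(s,\xi)$ belongs to $\mathrm{L}_1((0,t);\mathrm{L}_2(\bR^d))$ for every $t<T$. By Cauchy--Schwarz in $\xi$, the double integral is therefore absolutely convergent, and Fubini gives
$$
\int \Big(\hat v(t,\xi)-\int_0^t G(s,\xi)\,\mathrm{d}s\Big)\overline{\hat\varphi(\xi)}\,\mathrm{d}\xi=0 .
$$
The set of exceptional $t$ depends on $\varphi$. We handle this by taking a countable family $\{\varphi_k\}$ whose Fourier transforms are dense in $\mathrm{L}_2$ (for instance, dense in $\mathrm{C}_c$ of each ball). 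Since $H(t,\cdot):=\hat v(t,\cdot)-\int_0^t G\,\mathrm{d}s$ lies in $\mathrm{L}_2(\bR^d)$ by Minkowski, we conclude $H(t,\cdot)=0$ a.e.\ for a.e.\ $t$. Joint measurability together with Tonelli then gives the scalar identity above for a.e.\ $(t,\xi)$. For a.e.\ fixed $\xi$ it holds for a.e.\ $t$, and $s\mapsto G(s,\xi)$ is in $\mathrm{L}_1(0,t)$ for all $t<T$, since $\int_{\bR^d}\int_0^t|G|\,\mathrm{d}s\,\mathrm{d}\xi$ is locally finite on balls.

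\textbf{Step 2 (scalar uniqueness).} Fix such a $\xi$. Replace $\hat v(\cdot,\xi)$ by its absolutely continuous version $h(t)=\int_0^t G(s,\xi)\,\mathrm{d}s$, so that $h(0)=0$, $h'=\psi(\cdot,\xi)h$ a.e., and $\psi(\cdot,\xi)h\in \mathrm{L}_1(0,t)$. Because $\psi(\cdot,\xi)$ may fail to be integrable near $0$, a plain Gronwall inequality is not available. We therefore argue on $(\varepsilon,t)$: the first condition in \eqref{20260811 01} gives $\psi(\cdot,\xi)\in \mathrm{L}_1(0,t)$ for a.e.\ $\xi$, by Tonelli on $B_R$. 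This is in fact enough. With $\Psi(t)=\int_0^t\psi(r,\xi)\,\mathrm{d}r$, the function $e^{-\Psi(t)}h(t)$ is absolutely continuous with zero derivative and vanishes at $0$, so $h\equiv 0$. Hence $\hat v=0$ a.e., and Plancherel gives $v=0$.

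\textbf{Main obstacle.} The delicate point is Step 1. We must pass from test-function identities holding for a.e.\ $t$, with $\varphi$-dependent null sets, to a pointwise-in-$\xi$ integral equation. This requires justifying Fubini through exactly the $\mathrm{H}^{\psi}_{1,2}$ integrability (Hille-type interchange) and choosing the countable dense family carefully. The scalar step is routine once local integrability of $\psi(\cdot,\xi)$ near $t=0$ is extracted from \eqref{20260811 01}.
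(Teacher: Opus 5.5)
Your proposal is correct and follows essentially the same route as the paper. You take the difference $v$, use a countable dense family of test functions (with the $\mathrm{H}^{\psi}_{1,2,loc}$ integrability justifying Fubini) to reach $\hat v(t,\xi)=\int_0^t\psi(s,\xi)\hat v(s,\xi)\,\mathrm{d}s$, and conclude with the integrating factor $e^{-\Psi}$, using $\psi(\cdot,\xi)\in\mathrm{L}_1(0,t)$ from the first condition in \eqref{20260811 01}. The paper first writes the $\mathrm{L}_2$-valued identity $v(t)=\int_0^tG(s)\,\mathrm{d}s$ and only then takes the Fourier transform, whereas you apply Plancherel to each test identity first; this difference is cosmetic.

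One small point: in Step 2 you say that $\psi(\cdot,\xi)$ ``may fail to be integrable near $0$'' and propose arguing on $(\varepsilon,t)$. That detour is unnecessary, since, as you then note yourself, the first condition already gives integrability on all of $(0,t)$ for a.e. $\xi$, so it should simply be deleted.
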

\begin{proof}
Let $u_1$ and $u_2$ be two such solutions and set $v=u_1-u_2$.
By the definition of $\mathrm{H}_{1,2,loc}^{\psi}$,
\[
G(s):=\cF^{-1}\!\left[\psi(s,\cdot)\cF[v(s,\cdot)]\right]
\in \mathrm{L}_1\big((0,t);\mathrm{L}_2(\bR^d)\big)
\]
for every $t<T$.  Subtracting the two weak formulations gives
\[
\left(v(t)-\int_0^tG(s)\,\mathrm{d}s,\varphi\right)_{\mathrm L_2}=0
\]
for every $\varphi\in\cF^{-1}\mathrm C_c^\infty(\bR^d)$ and almost every
$t$.  Choose a countable $\mathrm L_2$-dense subset of this test-function
space and remove the union of the corresponding null sets.  It follows that
\[
v(t)=\int_0^tG(s)\,\mathrm{d}s\qquad\text{in }\mathrm L_2(\bR^d)
\]
for almost every $t$.  After taking the Fourier transform and using Fubini's
theorem on each ball $B_R$, we obtain, for almost every $\xi$,
\[
\widehat v(t,\xi)=\int_0^t\psi(s,\xi)\widehat v(s,\xi)\,\mathrm{d}s
\quad\text{for a.e. }t.
\]
The first part of \eqref{20260811 01} implies that
$\psi(\cdot,\xi)\in\mathrm L_1(0,t)$ for almost every $\xi$.  Hence the
right-hand side has an absolutely continuous representative $V(\cdot,\xi)$
satisfying
\[
V'(t,\xi)=\psi(t,\xi)V(t,\xi)\quad\text{a.e.},
\qquad V(0,\xi)=0.
\]
Multiplication by the integrating factor
$\exp(-\int_0^t\psi(r,\xi)\,\mathrm{d}r)$ shows that $V(t,\xi)=0$.
Thus $v=0$ almost everywhere.
\end{proof}

We next give a direct existence and uniqueness result in the precise
Fourier-side product class used in its proof.

\begin{thm}
									\label{weak well}
Let $f \in \mathrm{L}_{2,loc}\left( (0,T) \times \bR^d \right)$. Suppose
Assumption \ref{main as} and condition \eqref{20260811 01} hold.
Additionally, assume that for all $R \in (0,\infty)$ and $t \in (0,T)$, the following integrability condition is satisfied:
\begin{align}
										\label{20260324 01}
\int_{B_R} \int_0^t |\psi(\rho,\xi)| \int_0^\rho |\cF[f(s,\cdot)](\xi)| \mathrm{d}s \mathrm{d}\rho \mathrm{d}\xi < \infty \quad \forall R \in (0,\infty)~\text{and}~ \forall t \in (0,T).
\end{align}
Then there exists a spatial Fourier weak solution $u$ to \eqref{ab eqn} in the space
$$
\mathrm{L}_{\infty,2, loc}\left( (0,T) \times \bR^d, \frac{1}{t}\mathrm{d}t \mathrm{d}x \right) \cap \mathrm{L}_{2, loc}\left( (0,T) \times \bR^d, \frac{1}{t^2}\mathrm{d}t\mathrm{d}x \right).
$$
It is unique among the solutions in this space that satisfy
\begin{align}
										\label{20260819 01}
\int_0^t\int_{B_R}|\psi(s,\xi)|\,|\cF[u(s,\cdot)](\xi)|
\,\mathrm{d}\xi\mathrm{d}s<\infty
\quad\text{for all }t<T\text{ and }R>0.
\end{align}
Furthermore, $u$ is given by \eqref{u defn} and satisfies
\eqref{20260321 01-2} and \eqref{20260321 02}.
\end{thm}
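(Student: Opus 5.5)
We take the explicit candidate $u$ from \eqref{u defn}, show that it is a spatial Fourier weak solution, and then prove uniqueness by a per-frequency ODE argument in the spirit of Theorem \ref{uniqueness theorem}.

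\textbf{Membership and estimates.} By Lemma \ref{l2 bounded lemma 1}, which needs only Assumption \ref{main as} and interior integrability (the latter implied by the first part of \eqref{20260811 01} for a.e.\ $\xi$), $u$ lies in $\mathrm{L}_{\infty,2,loc}((0,T)\times\bR^d,t^{-1}\mathrm{d}t\mathrm{d}x)\cap\mathrm{L}_{2,loc}((0,T)\times\bR^d,t^{-2}\mathrm{d}t\mathrm{d}x)$ and satisfies \eqref{20260321 01-2} and \eqref{20260321 02}.

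\textbf{Existence: the weak identity.} Write $\hat u(t,\xi)=\int_0^t e^{\int_s^t\psi}\hat f(s,\xi)\,\mathrm{d}s$. For a.e.\ $\xi$, the function $\psi(\cdot,\xi)$ is in $L_1(0,t)$ and $\hat f(\cdot,\xi)\in L_1(0,t)$, so $\hat u(\cdot,\xi)$ is absolutely continuous. It satisfies
\[
\hat u(t,\xi)=\int_0^t\bigl(\psi(\rho,\xi)\hat u(\rho,\xi)+\hat f(\rho,\xi)\bigr)\,\mathrm{d}\rho .
\]
This is checked by differentiating $t\mapsto e^{\int_0^t\psi}\int_0^t e^{-\int_0^s\psi}\hat f\,\mathrm{d}s$; equivalently, apply Fubini to $\int_0^t\psi(\rho)\int_0^\rho e^{\int_s^\rho\psi}\hat f\,\mathrm{d}s\,\mathrm{d}\rho$, using $|e^{\int_s^\rho\psi}|\le1$. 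Both routes are justified by \eqref{20260324 01}. Now take $\varphi\in\cF^{-1}\mathrm{C}_c^\infty$ with $\operatorname{supp}\hat\varphi\subset B_R$, multiply the identity by $\overline{\hat\varphi(\xi)}$, and integrate over $B_R$. We must then exchange $\int_{B_R}$ with $\int_0^t$ in the term $\int_0^t\int_{B_R}\psi\hat u\,\overline{\hat\varphi}$. The bound $|\psi\hat u\hat\varphi|\le\|\hat\varphi\|_\infty|\psi(\rho,\xi)|\int_0^\rho|\hat f(s,\xi)|\,\mathrm{d}s$ together with \eqref{20260324 01} makes Fubini legitimate. The $f$-term is handled by Cauchy--Schwarz and $f\in\mathrm{L}_{2,loc}$. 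Plancherel then turns this into \eqref{weak formulation}. In particular we also get \eqref{20260819 01} for this $u$, since $|\hat u|\le\int_0^\rho|\hat f|$.

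\textbf{Uniqueness.} Let $u_1,u_2$ be two solutions in the class and satisfying \eqref{20260819 01}, and set $v=u_1-u_2$. The difference of the weak formulations gives, for each test $\varphi$ supported in $B_R$ in frequency,
\[
\int_{B_R}\Bigl(\hat v(t,\xi)-\int_0^t\psi\hat v\,\mathrm{d}s\Bigr)\overline{\hat\varphi(\xi)}\,\mathrm{d}\xi=0 \quad\text{for a.e. } t,
\]
where \eqref{20260819 01} is exactly what permits Fubini here. The bracket is in $L_1(B_R)$ for a.e.\ $t$. Choosing a countable family of $\hat\varphi$ dense in $C_c(B_R)$ and discarding a null set of $t$, the bracket vanishes for a.e.\ $\xi$. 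A further Fubini step on $(0,t)\times B_R$ makes this hold for a.e.\ $\xi$ and a.e.\ $t$ simultaneously. For such $\xi$, $\hat v(\cdot,\xi)$ agrees a.e.\ with an absolutely continuous $V$ satisfying $V'=\psi V$ and $V(0)=0$; the integrating factor $e^{-\int_0^t\psi}$ forces $V\equiv0$. Hence $v=0$.

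The main obstacle is this measure-theoretic juggling of the order of quantifiers (a.e.\ $t$ versus a.e.\ $\xi$, and countable test families), together with making sure that \eqref{20260324 01} and \eqref{20260819 01} are exactly the hypotheses that license each Fubini interchange. The ODE step itself is routine.
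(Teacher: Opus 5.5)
Your proposal is correct and follows the paper's proof essentially step by step. The existence part uses the per-frequency variation-of-constants identity for the explicit candidate, the bound $|\cF[u(\rho,\cdot)](\xi)|\le\int_0^\rho|\cF[f(s,\cdot)](\xi)|\,\mathrm{d}s$ to get \eqref{20260819 01} from \eqref{20260324 01}, and Fubini on $(0,t)\times B_R$. The uniqueness part uses a countable dense family of frequency test functions and the integrating-factor argument. The one point you leave implicit is the Plancherel step $(u(s,\cdot),\overline\psi(s,-\mathrm{i}\nabla)\varphi)_{\mathrm{L}_2(\bR^d)}=\int_{\bR^d}\psi(s,\xi)\cF[u(s,\cdot)](\xi)\overline{\cF[\varphi](\xi)}\,\mathrm{d}\xi$. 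It requires $\overline\psi(s,-\mathrm{i}\nabla)\varphi\in\mathrm{L}_2(\bR^d)$ for a.e.\ $s$, which is exactly what the second part of \eqref{20260811 01} supplies. The paper invokes that hypothesis explicitly, and you need it in your uniqueness step as well.
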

\begin{proof}
Condition \eqref{20260811 01} implies Assumption \ref{main as 2} by
Fubini's theorem.  Define $u$ by \eqref{u defn}.  Since
$\Re\psi\leq0$, Lemma \ref{l2 bounded lemma 1} gives
\eqref{20260321 01-2} and \eqref{20260321 02}, and therefore places $u$ in
the two asserted spaces.

For almost every $\xi$, both $\psi(\cdot,\xi)$ and the function
$s\mapsto\cF[f(s,\cdot)](\xi)$ are locally integrable in time.  The scalar
variation-of-constants formula therefore yields
\begin{align}
										\label{20260819 02}
\cF[u(t,\cdot)](\xi)
=\int_0^t\psi(s,\xi)\cF[u(s,\cdot)](\xi)\,\mathrm{d}s
+\int_0^t\cF[f(s,\cdot)](\xi)\,\mathrm{d}s
\end{align}
for almost every $(t,\xi)$.  Moreover,
\[
|\cF[u(\rho,\cdot)](\xi)|
\leq\int_0^\rho|\cF[f(s,\cdot)](\xi)|\,\mathrm{d}s,
\]
so \eqref{20260324 01} proves \eqref{20260819 01}.  If
$\varphi\in\cF^{-1}\mathrm C_c^\infty(\bR^d)$ and
$\operatorname{supp}\widehat\varphi\subset B_R$, the second part of
\eqref{20260811 01} ensures that
$\overline\psi(s,-\mathrm i\nabla)\varphi\in\mathrm L_2$ for almost every
$s$, while \eqref{20260819 01} justifies Fubini's theorem in
\eqref{20260819 02}.  Multiplying that identity by
$\overline{\widehat\varphi}$, integrating in $\xi$, and applying
Plancherel's theorem gives \eqref{weak formulation}.  Thus $u$ is a spatial
Fourier weak solution.

For uniqueness, let $u_1,u_2$ be two solutions in the asserted class that
satisfy \eqref{20260819 01}, and put $v=u_1-u_2$.  Subtracting their weak
formulations and using a countable dense family in
$\mathrm C_c^\infty(B_R)$ gives, simultaneously outside one null set,
\[
\widehat v(t,\xi)=\int_0^t\psi(s,\xi)\widehat v(s,\xi)\,\mathrm{d}s
\quad\text{for a.e. }(t,\xi).
\]
Here \eqref{20260819 01} justifies the local Fourier-side integrals and the
passage from the test identities to the displayed equality.  The same
integrating-factor argument as in Theorem \ref{uniqueness theorem}, using
the first part of \eqref{20260811 01}, yields $v=0$.
\end{proof}

\begin{rem}
										\label{20260401 rmk 1}

Assuming that Assumption \ref{main as 3} is satisfied and $f \in \mathrm{L}_{2,loc}\left( (0,T) \times \bR^d, w(t)\mathrm{d}t\mathrm{d}x \right)$, condition \eqref{20260324 01} holds automatically. 
While straightforward to verify, the details are deferred to the proof of Theorem \ref{main thm 2} in Section \ref{pf main thm}. 
This sufficient condition plays a pivotal role in establishing Theorem \ref{main thm 2}

To optimize this framework further, we introduce a relevant corollary immediately following this remark.
It is worth noting that condition \eqref{20260324 01} holds intrinsic value beyond merely satisfying the prerequisites of our main theorem. 
Specifically, it exposes a critical structural trade-off: the symbol $\psi$ is allowed to exhibit highly singular behavior, provided that the inhomogeneous data $f$ is sufficiently well-behaved to control those singularities.
\end{rem}

\begin{corollary}
									\label{weak well weight}

Let $w$ be a weight satisfying Assumption \ref{weight as}, and let $f \in \mathrm{L}_{2,loc}\left( (0,T) \times \bR^d, w(t)\mathrm{d}t\mathrm{d}x \right)$. 
Suppose that Assumption \ref{main as} holds. 
Additionally, assume \eqref{20260811 01} and \eqref{20260324 01} are true.
Then \eqref{ab eqn} admits a spatial Fourier weak solution $u$ in the space
$$
\mathrm{L}_{\infty,2, loc}\left( (0,T) \times \bR^d, \frac{w(t)}{t} \mathrm{d}t \mathrm{d}x \right) \cap \mathrm{L}_{2, loc}\left( (0,T) \times \bR^d, \frac{w(t)}{t^2}\mathrm{d}t \mathrm{d}x \right).
$$
It is unique among the solutions in this space satisfying
\eqref{20260819 01}. Furthermore, $u$ is given by \eqref{u defn} and
satisfies \eqref{20260321 03-2} and \eqref{20260321 04}.
\end{corollary}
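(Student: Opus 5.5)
The corollary follows by combining Theorem \ref{weak well} with Corollary \ref{l2 bounded corollary 1}, once we check that the hypotheses of Theorem \ref{weak well} are met.

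\textbf{Step 1: reduce to the unweighted hypothesis.} By Assumption \ref{weight as}, for every $T'\in(0,T)$ and a.e. $t<T'$ we have $w(T'')\le N_w w(t)$, where $T''\in(T',T)$ is any point at which the inequality holds. Hence
\[
\mathrm{L}_{2,loc}\bigl((0,T)\times\bR^d,w(t)\mathrm{d}t\mathrm{d}x\bigr)\subset \mathrm{L}_{2,loc}\bigl((0,T)\times\bR^d\bigr),
\]
which is the inclusion already used in Corollary \ref{l2 bounded corollary 1}. Together with Assumption \ref{main as}, \eqref{20260811 01} and \eqref{20260324 01}, this puts us exactly in the setting of Theorem \ref{weak well}. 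That theorem then gives a spatial Fourier weak solution $u$, explicitly given by \eqref{u defn}, which satisfies \eqref{20260819 01}.

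\textbf{Step 2: the weighted estimates.} Condition \eqref{20260811 01} implies Assumption \ref{main as 2}, as noted in the proof of Theorem \ref{weak well}. So Corollary \ref{l2 bounded corollary 1} applies to the same function $u$ from \eqref{u defn} and yields \eqref{20260321 03-2} and \eqref{20260321 04}. The right-hand sides are finite for every $t<T$ because $f$ lies in the weighted local space. Therefore $u$ belongs to $\mathrm{L}_{\infty,2,loc}(w(t)/t)$ and to $\mathrm{L}_{2,loc}(w(t)/t^2)$. Measurability is inherited from the unweighted membership.

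\textbf{Step 3: uniqueness.} Let $u_1,u_2$ be two solutions in the weighted class, both satisfying \eqref{20260819 01}. We check that they also lie in the unweighted class of Theorem \ref{weak well}, and then invoke that theorem's uniqueness. Since $w$ is quasi-decreasing, on $(0,T')$ we have $w(t)\ge N_w^{-1}w(T'')>0$ for a suitable $T''\in(T',T)$. Thus $w(t)/t\gtrsim 1/t$ and $w(t)/t^2\gtrsim 1/t^2$ locally, with constants depending on $T'$. It follows that both weighted spaces embed locally into their unweighted counterparts. Uniqueness is therefore inherited directly from Theorem \ref{weak well}.

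\textbf{Main obstacle.} The delicate point is the lower bound on $w$ near $T'$. The quasi-decreasing property only bounds $w$ from below on $(0,T')$ by its values at later times, and these are positive only almost everywhere. We therefore choose $T''$ to be a Lebesgue point at which Assumption \ref{weight as}(ii) holds and $w(T'')>0$. Such a point exists because the exceptional set has measure zero. Everything else is a direct citation of earlier results.
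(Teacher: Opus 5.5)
Your proof is correct and follows the paper's argument. The quasi-decreasing lower bound on $w$ gives $\mathrm{L}_{2,loc}((0,T)\times\bR^d,w(t)\mathrm{d}t\mathrm{d}x)\subset\mathrm{L}_{2,loc}((0,T)\times\bR^d)$, Theorem \ref{weak well} supplies the solution \eqref{u defn}, and Corollary \ref{l2 bounded corollary 1} yields the weighted estimates \eqref{20260321 03-2}--\eqref{20260321 04}. Your Step 3 spells out what the paper leaves implicit: the weighted solution class embeds into the unweighted class of Theorem \ref{weak well}, so uniqueness transfers. (The Lebesgue-point requirement on $T''$ is unnecessary; you only need $w(T'')>0$ and the quasi-decreasing inequality at $T''$ for a.e.\ earlier $s$, which Fubini provides.)
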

\begin{proof}
Recalling Remark \ref{extra weight rem}, we have the inclusion:
$$
\mathrm{L}_{2,loc}\left( (0,T) \times \bR^d, w(t)\mathrm{d}t\mathrm{d}x \right) \subset \mathrm{L}_{2,loc}\left( (0,T) \times \bR^d \right).
$$
Therefore, by Theorem \ref{weak well}, there exists a spatial Fourier weak solution $u$ to \eqref{ab eqn}, unique among those satisfying \eqref{20260819 01}, in
$$
\mathrm{L}_{\infty,2, loc}\left( (0,T) \times \bR^d, \frac{1}{t}\mathrm{d}t \mathrm{d}x \right) \cap \mathrm{L}_{2,loc}\left( (0,T) \times \bR^d, \frac{1}{t^2}\mathrm{d}t\mathrm{d}x \right),$$
such that \eqref{20260321 01-2} and \eqref{20260321 02} are satisfied. 
Moreover, Corollary \ref{l2 bounded corollary 1} ensures that \eqref{20260321 03-2} and \eqref{20260321 04} hold. 
Finally, it is evident that \eqref{20260321 03-2} and \eqref{20260321 04} imply
$$
u \in \mathrm{L}_{\infty,2, loc}\left( (0,T) \times \bR^d, \frac{w(t)}{t} \mathrm{d}t \mathrm{d}x \right) \cap \mathrm{L}_{2, loc}\left( (0,T) \times \bR^d, \frac{w(t)}{t^2}\mathrm{d}t \mathrm{d}x \right),
$$
which completes the proof.
\end{proof}

Recall that establishing our well-posedness theory relied on selecting the spatial Fourier weak solution as our fundamental starting point, a choice that seamlessly bridged our earlier findings. 
The relation between strong and spatial Fourier weak solutions is summarized in Lemma \ref{inculsuion solutions lemma}; convergence to a spectral-limit solution is proved separately in Theorem \ref{main thm}.

The substantial mathematical challenge, rather, lies in establishing higher regularity. 
Transforming a weak solution into a strong solution is highly non-trivial. 
Therefore, the main focus of this section is to resolve this issue by determining the precise conditions on the symbol that ensure this regularity enhancement, effectively converting the spatial Fourier weak solution into a strong one.

Before proceeding, we introduce two function classes defined via the Fourier transform. 
We say that
$$
f \in \mathrm{L}_{2,loc}\left( (0,T) \times \bR^d, w(t)\mathrm{d}t\mathrm{d}x \right) 
\cap \cF^{-1}\mathrm{L}_{1,loc}\left( (0,T) \times \bR^d \right)
$$
provided $f \in \mathrm{L}_{2,loc}\left( (0,T) \times \bR^d, w(t)\mathrm{d}t\mathrm{d}x \right)$ and
$$
\int_0^t\int_{\bR^d}|\cF[f(s,\cdot)](\xi)| \mathrm{d}\xi \mathrm{d}s < \infty \quad \forall t \in (0,T).
$$
Similarly, in a formal sense, we write
$$
u \in \cF^{-1}\mathrm{H}_{1,loc}^{\psi}\left( (0,T) \times \bR^d\right)
$$
whenever
$$
\int_0^t\int_{\bR^d} |\psi(s,\xi)| |\cF[u(s,\cdot)](\xi)| \mathrm{d}\xi \mathrm{d}s < \infty \quad \forall t \in (0,T).
$$

\begin{lem}
										\label{strong solution lem}
Let $f \in \mathrm{L}_{2,loc}\left( (0,T) \times \bR^d, w(t)\mathrm{d}t\mathrm{d}x \right) 
\cap \cF^{-1}\mathrm{L}_{1,loc}\left( (0,T) \times \bR^d\right)$, 
where $w$ is a weight satisfying Assumption \ref{weight as}. 
Assume in addition that \eqref{20260811 01} holds.
Suppose $u$ is a spatial Fourier weak solution to \eqref{ab eqn} that belongs to the intersection of the following four function spaces:
$$
\begin{aligned}
&\mathrm{L}_{\infty,2, loc}\left( (0,T) \times \bR^d ,\frac{w(t)}{t}\mathrm{d}t \mathrm{d}x\right),\\
&\mathrm{L}_{2, loc}\left( (0,T) \times \bR^d,\frac{w(t)}{t^2}\mathrm{d}t\mathrm{d}x \right),\\
& \mathrm{H}_{1,2,loc}^{\psi}\left( (0,T) \times \bR^d\right), \\
& \cF^{-1}\mathrm{H}_{1,loc}^{\psi}\left( (0,T) \times \bR^d\right).
\end{aligned}
$$
Then, $u$ is the unique strong solution to \eqref{ab eqn} within this class.

\end{lem}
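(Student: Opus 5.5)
The plan is to upgrade the weak formulation \eqref{weak formulation} to the pointwise identity \eqref{20230213 30}. Uniqueness then follows from Theorem \ref{uniqueness theorem}.

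\textbf{Step 1: the time integral in $\mathrm{L}_2$.} Since $u\in \mathrm{H}_{1,2,loc}^{\psi}$, the map
\[
G(s):=\psi(s,-\mathrm{i}\nabla)u(s,\cdot)
\]
lies in $\mathrm{L}_1((0,t);\mathrm{L}_2(\bR^d))$ for every $t<T$. Moreover, $f\in \mathrm{L}_{2,loc}(w\,\mathrm{d}t\mathrm{d}x)\subset \mathrm{L}_{2,loc}$ by Assumption \ref{weight as}, so $f\in \mathrm{L}_1((0,t);\mathrm{L}_2)$ as well. I therefore take the Bochner integral
\[
W(t):=\int_0^t (G(s)+f(s))\,\mathrm{d}s\in \mathrm{L}_2(\bR^d).
\]
By Hille's theorem, pairing commutes with the Bochner integral. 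Hence for each $\varphi\in\cF^{-1}\mathrm C_c^\infty$ the right-hand side of \eqref{weak formulation} equals $(W(t),\varphi)$, once I check that
\[
\big(u(s),\overline\psi(s,-\mathrm{i}\nabla)\varphi\big)=(G(s),\varphi).
\]
This identity is Plancherel on the ball $B_R\supset\operatorname{supp}\widehat\varphi$. Both sides are finite because $\psi(s,\cdot)\in \mathrm{L}_{2,loc}$ (second part of \eqref{20260811 01}) and $\widehat u(s)\in \mathrm{L}_2$. Taking a countable $\mathrm{L}_2$-dense family of test functions and discarding a null set, I get $u(t)=W(t)$ in $\mathrm{L}_2(\bR^d)$ for a.e.\ $t$.

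\textbf{Step 2: from Bochner to pointwise identity (the main obstacle).} The Bochner identity gives equality in $\mathrm{L}_2$ at fixed $t$. Definition \ref{strong solution}, however, requires the pointwise-in-$x$ integral $\int_0^t(\psi(s,-\mathrm{i}\nabla)u(s,x)+f(s,x))\,\mathrm{d}s$ to exist for a.e.\ $x$ and to coincide with $W(t)(x)$.

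\emph{Existence of the pointwise integral.} I use the hypotheses $u\in\cF^{-1}\mathrm H^{\psi}_{1,loc}$ and $f\in\cF^{-1}\mathrm L_{1,loc}$. They give $\psi\widehat u\in \mathrm{L}_1((0,t)\times\bR^d)$ and $\widehat f\in \mathrm{L}_1((0,t)\times\bR^d)$. So $G(s,x)$ and $f(s,x)$ are given by absolutely convergent inverse Fourier integrals, and they are bounded by
\[
(2\pi)^{-d/2}\int_{\bR^d}\big(|\psi\widehat u|+|\widehat f|\big)(s,\xi)\,\mathrm{d}\xi,
\]
which is integrable in $s$. Hence $\int_0^t(G+f)(s,x)\,\mathrm{d}s$ exists for every $x$.

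\emph{Identification with $W(t)$.} By Fubini,
\[
\int_0^t(G+f)(s,x)\,\mathrm{d}s=\cF^{-1}\Big[\int_0^t(\psi\widehat u+\widehat f)(s,\cdot)\,\mathrm{d}s\Big](x).
\]
Here the bracket is an $\mathrm{L}_1$ function of $\xi$. On the other hand, $\widehat{W(t)}$ equals the same $\xi$-integral a.e., again via Hille applied to $\cF$ on $\mathrm{L}_2$. So both sides are the inverse transform of the same function in $\mathrm{L}_1\cap \mathrm{L}_2$, and they agree a.e.\ in $x$. Joint measurability in $(t,x)$ follows from Fubini, and Step 1 then gives \eqref{20230213 30} a.e.\ in $(t,x)$.

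\textbf{Step 3: uniqueness.} A strong solution in this class is also a spatial Fourier weak solution: pair \eqref{20230213 30} with $\varphi$ and use the same Fubini/Hille interchange in reverse. It lies in $\mathrm{L}_{2,loc}\cap \mathrm H^{\psi}_{1,2,loc}$, since $w/t^2$ is bounded below on each $(0,T')$ by Assumption \ref{weight as}, and \eqref{20260811 01} is assumed. Theorem \ref{uniqueness theorem} therefore yields uniqueness.
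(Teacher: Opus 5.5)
Your argument is correct. Its uniqueness part (Step 3) is the paper's: a strong solution in the class is a spatial Fourier weak solution (Lemma \ref{strong unique lem}), and Theorem \ref{uniqueness theorem} then applies.

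The weak-to-strong upgrade follows a genuinely different route. The paper goes to the frequency side and tests the weak formulation against the translated mollifiers $\chi^\varepsilon(x-\cdot)$. It then uses continuity in $x$ to make the identity hold for all $x$ outside a single $t$-null set. Next it Fourier transforms to obtain $\cF[u(t,\cdot)]\cF[\chi^\varepsilon]=\int_0^t\psi\cF[u]\cF[\chi^\varepsilon]\,\mathrm{d}s+\int_0^t\cF[f]\cF[\chi^\varepsilon]\,\mathrm{d}s$ and lets $\varepsilon\downarrow0$. Only at the end does it return to physical space via Hille's theorem.

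You instead stay in $\mathrm{L}_2(\bR^d)$ throughout. You identify the right-hand side of \eqref{weak formulation} with $(W(t),\varphi)_{\mathrm{L}_2}$ and remove the test functions with a countable dense family; the paper itself uses this device in the proof of Theorem \ref{uniqueness theorem}.

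What your route buys: it avoids the mollifier, the continuity-in-$x$ bookkeeping and the $\varepsilon$-limit. Your Step 2 also spells out the passage from the $\mathrm{L}_2$-valued Bochner identity to the literal pointwise-in-$x$ time integral required by Definition \ref{strong solution}, which the paper's closing appeal to Hille leaves implicit. That is where you use $u\in\cF^{-1}\mathrm{H}^{\psi}_{1,loc}$ and $f\in\cF^{-1}\mathrm{L}_{1,loc}$, whereas the paper spends these hypotheses on Fubini and continuity in $x$ in the mollifier step. Strictly, even your use is more than necessary: any jointly measurable element of $\mathrm{L}_1((0,t);\mathrm{L}_2(\bR^d))$ has a pointwise time integral that exists and agrees with its Bochner integral for a.e.\ $x$.

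What the paper's route buys: it yields the frequency-side form of the equation, pointwise in $\xi$, as an intermediate statement.

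The details you leave implicit are routine: $u(s,\cdot)\in\mathrm{L}_2$ for a.e.\ $s$, joint measurability of $\psi\cF[u]$, and the choice of absolutely convergent representatives.
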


\begin{proof}
We first show that the spatial Fourier weak solution $u$ is a strong solution.
For any test function $\varphi \in \cF^{-1}\mathrm{C}_c^\infty(\bR^d)$, the weak formulation yields:
\begin{align} 
									\label{20230609 01} 
\int_{\bR^d} u(t,x) \overline{\varphi(x)} \mathrm{d}x =  \int_0^t \int_{\bR^d}u(s,x) \overline{ \overline{\psi}(s,-\mathrm{i}\nabla) \varphi(x)} \mathrm{d}x \mathrm{d}s + \int_0^t \int_{\bR^d} f(s,x)  \overline{\varphi(x)} \mathrm{d}x \mathrm{d}s 
\end{align}
for a.e. $t\in (0,T)$.
Given that the weight $w$ satisfies Assumption \ref{weight as} and
\[
u \in \mathrm{L}_{2, loc}\left( (0,T) \times \bR^d, t^{-2}w(t) \mathrm{d}t \mathrm{d}x \right),
\]
we obtain the following bound for all $t \in (0,T)$:
\[
\int_0^t \int_{\bR^d} | u(s,x)|^2 \mathrm{d}x \mathrm{d}s \lesssim_{t,N_w} \int_0^t \int_{\bR^d} | u(s,x)|^2 \frac{w(s)}{s^2} \mathrm{d}x \mathrm{d}s < \infty.
\]
In particular, this implies
\[
\int_{\bR^d} | u(t,x)|^2 \mathrm{d}x < \infty \quad \text{for a.e. } t \in (0,T).
\]
Furthermore, if $\cF[\varphi] \in \mathrm{C}_c^\infty(\bR^d)$, we can bound the term involving the symbol as follows:
\[
\begin{aligned} 
\int_{\bR^d} \left| \psi(t,\xi) \overline{\cF[\varphi](\xi)} \right|^2 \mathrm{d}\xi &\leq \|1_{\text{supp } \cF[\varphi]} \psi(t,\cdot)\|^2_{\mathrm{L}_2(\bR^d)} \|  \cF[\varphi] \|^2_{\mathrm{L}_{\infty}(\bR^d)} < \infty 
\end{aligned}
\]
for a.e. $t \in (0,T)$, where $\text{supp } \cF[\varphi]$ denotes the compact support of $\cF[\varphi]$.
Applying Plancherel's theorem to \eqref{20230609 01}, we find that for any $\varphi \in \cF^{-1}\mathrm{C}_c^\infty(\bR^d)$,
\begin{align} 
									\notag
&\int_{\bR^d} \cF[u(t,\cdot)](\xi) \overline{\cF[\varphi](\xi)} \mathrm{d}\xi  \\ 													\label{20230529 01} 
&= \int_0^t \int_{\bR^d} \cF[u(s,\cdot)](\xi) \psi(s,\xi) \overline{\cF[\varphi](\xi)} \mathrm{d}\xi \mathrm{d}s + \int_0^t \int_{\bR^d} \cF[f(s,\cdot)](\xi) \overline{\cF[\varphi](\xi)} \mathrm{d}\xi \mathrm{d}s 
\end{align}
for a.e. $t\in (0,T)$.

We now introduce a Sobolev mollifier. 
Let $\chi \in \cF^{-1}\mathrm{C}_c^\infty(\bR^d)$ be chosen such that its Fourier transform $\cF[\chi]$ is non-negative and symmetric, satisfying the normalization condition:
$$
\int_{\bR^d} \chi(x)\mathrm{d}x=(2\pi)^{d/2}.
$$
For a scaling parameter $\varepsilon \in (0,1)$, we define the scaled mollifier as
$$
\chi^\varepsilon(x) := \frac{1}{\varepsilon^d}\chi\left( \frac{x}{\varepsilon}\right).
$$
Fixing $x \in \bR^d$ and substituting $\chi^\varepsilon(x-\cdot)$ for the test function $\varphi$ in \eqref{20230529 01}, we obtain the following identity for almost every $t \in (0,T)$,
$$
\begin{aligned} 
&\int_{\bR^d} \cF[u(t,\cdot)](\xi) \overline{\cF[\chi^\varepsilon(x-\cdot)](\xi)}\mathrm{d}\xi \\ 
&=\int_0^t \int_{\bR^d} \cF[u(s,\cdot)](\xi) \psi(s,\xi) \overline{\cF[\chi^\varepsilon(x-\cdot)](\xi)} \mathrm{d}\xi \mathrm{d}s +\int_0^t \int_{\bR^d} \cF[f(s,\cdot)](\xi) \overline{\cF[\chi^\varepsilon(x-\cdot)](\xi)} \mathrm{d}\xi \mathrm{d}s.
\end{aligned}
$$
Observe that $\cF[u(t,\cdot)] \cF[\chi^\varepsilon] \in L_1(\bR^d)$ for almost every $t \in (0,T)$, since $\cF[\chi^\varepsilon]$ is a bounded function with compact support.

Consequently, by applying Plancherel's theorem, Fubini's theorem, and a straightforward change of variables, we deduce that for any $x \in \bR^d$,
\begin{align*}
&(2\pi)^{d/2}\cF^{-1}\left[ \cF[u(t,\cdot)] \cF[\chi^\varepsilon] \right](x)  \\
									\notag
&=\int_{\bR^d} e^{i  x \cdot \xi} \cF[u(t,\cdot)](\xi) \cF[\chi^\varepsilon](\xi) \mathrm{d}\xi \\
									\notag
&=\int_{\bR^d} \cF[u(t,\cdot)](\xi)  \overline{e^{-i  x \cdot \xi}\cF[\chi^\varepsilon](-\xi)} \mathrm{d}\xi \\
									\notag
&=\int_{\bR^d} \cF[u(t,\cdot)](\xi) \overline{\cF[\chi^\varepsilon(x-\cdot)](\xi)} \mathrm{d}\xi \\
									\notag
&= \int_0^t \int_{\bR^d} \cF[u(s,\cdot)](\xi) \psi(s,\xi) \overline{ \cF[\chi^\varepsilon(x-\cdot)](\xi)} \mathrm{d}\xi\mathrm{d}s 
+\int_0^t \int_{\bR^d} \cF[f(s,\cdot)](\xi) \overline{\cF[\chi^\varepsilon(x-\cdot)](\xi)} \mathrm{d}\xi \mathrm{d}s \\
									\notag
&= \int_0^t \int_{\bR^d} e^{ix \cdot \xi}\cF[u(s,\cdot)](\xi) \psi(s,\xi)  \cF[\chi^\varepsilon](\xi) \mathrm{d}\xi\mathrm{d}s 
+\int_0^t \int_{\bR^d} e^{ix \cdot \xi}\cF[f(s,\cdot)](\xi)  \cF[\chi^\varepsilon](\xi) \mathrm{d}\xi\mathrm{d}s \\
									\notag
&=(2\pi)^{d/2} \cF_{\xi}^{-1}\left[ \int_0^t \psi(s,\xi) \cF[u(s,\cdot)](\xi)  \cF[\chi^\varepsilon](\xi) \mathrm{d}s \right](x)
+(2\pi)^{d/2} \cF_{\xi}^{-1}\left[ \int_0^t\cF[f(s,\cdot)](\xi)  \cF[\chi^\varepsilon](\xi) \mathrm{d}s \right](x)
\end{align*}
for almost every $t \in (0,T)$. 
The application of Fubini's theorem in this step is rigorously justified by the Cauchy--Bunyakovsky--Schwarz inequality, combined with the integrability assumptions placed on $u$ and $f$.

Furthermore, all terms in the preceding estimates are continuous with respect to $x$, owing to the sufficient regularity imposed on $u$ and $f$.
Consequently, this equality holds for almost every $t \in (0,T)$ and for all $x \in \bR^d$.

Applying the spatial Fourier transform to the first and final expressions in the preceding chain of equalities, we obtain the following equation for almost every $\xi \in \bR^d$:
\begin{align}
								\label{20230215 50}
\cF[u(t,\cdot)](\xi) \cF[\chi^\varepsilon](\xi)
= \int_0^t \psi(s,\xi) \cF[u(s,\cdot)](\xi)  \cF[\chi^\varepsilon](\xi) \mathrm{d}s 
+\int_0^t  \cF[f(s,\cdot)](\xi)  \cF[\chi^\varepsilon](\xi) \mathrm{d}s 
\end{align}
for a.e. $t \in (0,T)$.
Next, we observe the pointwise behavior of the mollifier's Fourier transform as $\varepsilon \downarrow 0$:
\begin{align*}
\lim_{\varepsilon \downarrow 0}\cF[\chi^\varepsilon](\xi)
=\lim_{\varepsilon \downarrow 0}\cF[\chi](\varepsilon \xi)= \cF[\chi](0)= \frac{1}{(2\pi)^{d/2}} \int_{\bR^d}\chi(x)\mathrm{d}x=1 \quad \text{a.e.}~ \xi \in \bR^d.
\end{align*}
Therefore, by setting $\varepsilon=1/n$ and passing to the limit as $n \to \infty$ in \eqref{20230215 50}, we arrive at the following identity for almost every $t \in (0,T)$:
\begin{align*}
\cF[u(t,\cdot)](\xi)
=\int_0^t \psi(s,\xi) \cF[u(s,\cdot)](\xi) \mathrm{d}s 
+\int_0^t  \cF[f(s,\cdot)](\xi) \mathrm{d}s 
\quad \text{a.e.}~ \xi \in \bR^d.
\end{align*}

Finally, we recall that the inverse Fourier transform is a continuous operator, allowing us to utilize Hille's theorem (\textit{cf.} \cite[Corollary V.5.2]{Yosida}) to interchange the inverse Fourier transform with the temporal integrals. 
The hypotheses required for Hille's theorem are clearly satisfied; based on our assumptions regarding $u$ and $f$, we have the following bound for any $t \in (0,T)$:
\begin{align*}
\int_0^t \left( \int_{\bR^d} \left|\psi(s,\xi) \cF[u(s,\cdot)](\xi) \right|^2 \mathrm{d}\xi \right)^{1/2} \mathrm{d}s 
+ \int_0^t \left( \int_{\bR^d} \left|\cF[f(s,\cdot)](\xi)\right|^2 \mathrm{d}\xi \right)^{1/2} \mathrm{d}s < \infty.
\end{align*}
Therefore, by applying the inverse Fourier transform to both sides and invoking Hille's theorem, we definitively conclude that $u$ is a strong solution to \eqref{ab eqn}.

It remains to prove uniqueness in the stated class.  By Lemma
\ref{strong unique lem} below, every strong solution in this class is a
spatial Fourier weak solution.  Condition \eqref{20260811 01} and Theorem
\ref{uniqueness theorem} then imply that any two such strong solutions
coincide.

\end{proof}

By combining Lemma \ref{l2 bounded lemma 1} and the preceding lemma, we establish the well-posedness of a strong solution to \eqref{ab eqn} for smooth data $f$, without relying on \textit{a priori} estimates.

\begin{thm}
										\label{strong solution thm}
Let the function $f$ belong to the weighted space 
$$
\mathrm{L}_{2,loc}\left( (0,T) \times \bR^d, w(t)\mathrm{d}t \mathrm{d}x \right)\cap \cF^{-1}\mathrm{L}_{1,loc}\left( (0,T) \times \bR^d\right), 
$$
and let the weight function $w$ satisfy the conditions of Assumption \ref{weight as}. Assume that Assumptions \ref{main as} and \ref{main as 2} and condition \eqref{20260811 01} hold true. In addition, suppose that the following integral condition is finite for every $t \in (0,T)$:
\begin{align}
									\label{20260401 01}
\int_0^t  \left(\int_{\bR^d}  \left( |\psi(\rho,\xi)| \int_0^\rho |\cF[f(s,\cdot)](\xi) \mathrm{d}s\right)^2  \mathrm{d}\xi \right)^{1/2} \mathrm{d}\rho
+\int_0^t  \int_{\bR^d}   |\psi(\rho,\xi)| \int_0^\rho |\cF[f(s,\cdot)](\xi) \mathrm{d}s  \mathrm{d}\xi \mathrm{d}\rho
<\infty.
\end{align}
Under these conditions, there is exactly one strong solution $u$ to equation \eqref{ab eqn} that belongs to the intersection of the following local spaces:
$$
\begin{aligned}
&\mathrm{L}_{\infty,2, loc}\left( (0,T) \times \bR^d ,\frac{w(t)}{t}\mathrm{d}t \mathrm{d}x\right), \mathrm{L}_{2, loc}\left( (0,T) \times \bR^d,\frac{w(t)}{t^2}\mathrm{d}t\mathrm{d}x \right), \mathrm{H}_{1,2,loc}^{\psi}\left( (0,T) \times \bR^d\right)
, 
\end{aligned}
$$
and $\cF^{-1}\mathrm{H}_{1,loc}^{\psi}\left( (0,T) \times \bR^d\right)$.
Furthermore, this solution $u$ satisfies the inequalities \eqref{20260321 03-2} and \eqref{20260321 04}, and it can be explicitly represented by the formula \eqref{u defn}.

\end{thm}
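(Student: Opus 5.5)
Define $u$ by \eqref{u defn}, show that it is a spatial Fourier weak solution lying in the four spaces of Lemma \ref{strong solution lem}, and then quote that lemma for the strong-solution property and for uniqueness. The estimates \eqref{20260321 03-2} and \eqref{20260321 04} come directly from Corollary \ref{l2 bounded corollary 1}, since Assumptions \ref{main as}, \ref{main as 2} and \ref{weight as} hold. These two bounds also place $u$ in $\mathrm{L}_{\infty,2,loc}(w(t)t^{-1}\mathrm{d}t\mathrm{d}x)\cap \mathrm{L}_{2,loc}(w(t)t^{-2}\mathrm{d}t\mathrm{d}x)$.

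Next I would invoke Corollary \ref{weak well weight} to see that $u$ is a spatial Fourier weak solution. That corollary needs \eqref{20260811 01}, which is assumed, and \eqref{20260324 01}. The second summand in \eqref{20260401 01} is \eqref{20260324 01} with $B_R$ enlarged to all of $\bR^d$, after exchanging the $\xi$ and $\rho$ integrals by Tonelli. Hence the corollary applies, and it tells us that $u$ is the spatial Fourier weak solution given by \eqref{u defn}.

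It remains to place $u$ in $\mathrm{H}^{\psi}_{1,2,loc}$ and in $\cF^{-1}\mathrm{H}^{\psi}_{1,loc}$. This rests on the pointwise bound used in the proof of Theorem \ref{weak well}: since $\Re\psi\le 0$, we have $|\exp(\int_s^\rho\psi(r,\xi)\,\mathrm{d}r)|\le 1$, and therefore
$$
|\cF[u(\rho,\cdot)](\xi)|\le \int_0^\rho |\cF[f(s,\cdot)](\xi)|\,\mathrm{d}s \quad\text{for a.e. }(\rho,\xi).
$$
Multiplying by $|\psi(\rho,\xi)|$ and using Plancherel,
$$
\int_0^t\Big(\int_{\bR^d}|\psi(\rho,-\mathrm{i}\nabla)u(\rho,x)|^2\mathrm{d}x\Big)^{1/2}\mathrm{d}\rho\le\int_0^t\Big(\int_{\bR^d}\Big(|\psi(\rho,\xi)|\int_0^\rho|\cF[f(s,\cdot)](\xi)|\,\mathrm{d}s\Big)^2\mathrm{d}\xi\Big)^{1/2}\mathrm{d}\rho,
$$
and the right-hand side is the first summand of \eqref{20260401 01}. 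The same pointwise bound controls $\int_0^t\int_{\bR^d}|\psi||\cF[u]|\,\mathrm{d}\xi\,\mathrm{d}\rho$ by the second summand. One should check that $\psi(\rho,\cdot)\cF[u(\rho,\cdot)]\in \mathrm{L}_2$ for a.e. $\rho$, so that the operator is defined in the $\mathrm{L}_2$ sense and Plancherel applies. This follows because the inner integral in the display is finite for a.e. $\rho$.

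With $u$ in all four spaces, and with $f\in\cF^{-1}\mathrm{L}_{1,loc}$ and \eqref{20260811 01} as hypotheses, Lemma \ref{strong solution lem} shows that $u$ is the unique strong solution in that class. The main point to check is the joint measurability of $(\rho,\xi)\mapsto\cF[u(\rho,\cdot)](\xi)$, together with the fact that the Fourier transform of \eqref{u defn} coincides a.e. with the bracketed scalar integral. I would handle this by noting that the integrand of \eqref{u defn} is jointly measurable in $(s,\rho,\xi)$ and dominated by $|\cF[f(s,\cdot)](\xi)|$, so Tonelli gives measurability. Since $f\in\mathrm{L}_{2,loc}$, Minkowski's inequality shows that the bracket lies in $\mathrm{L}_2$ in $\xi$, which justifies applying $\cF^{-1}$ and then recovering the bracket by $\cF$.
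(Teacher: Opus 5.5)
Your proposal is correct and follows the paper's proof essentially step for step. The paper likewise deduces \eqref{20260324 01} from \eqref{20260401 01}, invokes Corollary \ref{weak well weight} to obtain the weak solution \eqref{u defn} with the weighted bounds, uses $|\cF[u(\rho,\cdot)](\xi)|\leq\int_0^\rho|\cF[f(s,\cdot)](\xi)|\,\mathrm{d}s$ together with \eqref{20260401 01} to place $u$ in $\mathrm{H}^{\psi}_{1,2,loc}\cap\cF^{-1}\mathrm{H}^{\psi}_{1,loc}$, and concludes with Lemma \ref{strong solution lem}; your remarks on joint measurability and on the bracket in \eqref{u defn} lying in $\mathrm{L}_2$ in $\xi$ supply details the paper leaves implicit.
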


\begin{proof}
First, condition \eqref{20260401 01} implies the local integrability condition \eqref{20260324 01}. 
Therefore, Corollary \ref{weak well weight} yields a spatial Fourier weak solution $u$, given by \eqref{u defn}, in the two weighted spaces stated in the theorem and satisfying \eqref{20260321 03-2} and \eqref{20260321 04}; it is unique among such solutions satisfying \eqref{20260819 01}.

It remains to verify the operator-integrability condition required by Lemma \ref{strong solution lem}. Since the real part of $\psi$ is non-positive, the representation \eqref{u defn} implies
\[
|\cF[u(\rho,\cdot)](\xi)|
\leq \int_0^\rho |\cF[f(s,\cdot)](\xi)|\,\mathrm{d}s.
\]
Consequently, \eqref{20260401 01} yields
\begin{align*}
\int_0^t\left(\int_{\bR^d}|\psi(\rho,\xi)\cF[u(\rho,\cdot)](\xi)|^2\mathrm{d}\xi\right)^{1/2}\mathrm{d}\rho
+\int_0^t\int_{\bR^d}|\psi(\rho,\xi)\cF[u(\rho,\cdot)](\xi)|\mathrm{d}\xi\mathrm{d}\rho
<\infty
\end{align*}
for every $t\in(0,T)$. Thus
\[
u\in\mathrm{H}_{1,2,loc}^{\psi}\left( (0,T) \times \bR^d\right) \cap \cF^{-1}\mathrm{H}_{1,loc}^{\psi}\left( (0,T) \times \bR^d\right).
\]
Lemma \ref{strong solution lem} now upgrades $u$ to the unique strong solution in the asserted class.
\end{proof}

One might assume, as noted in Remark \ref{20260401 rmk 1}, that condition \eqref{20260401 01} alone could control the singularity of the symbol by imposing stricter requirements on the inhomogeneous data $f$. 
However, a significant discrepancy exists between \eqref{20260401 01} and \eqref{20260324 01}, primarily because the condition in \eqref{20260324 01} is purely local.
As a result, \eqref{20260401 01} demands that $f$ exhibit strong decay at infinity in the frequency domain. 
This dictates that $f$ must possess high spatial regularity in order to apply the previous theorem. 
Consequently, this theorem falls short when attempting to establish a strong solution for irregular data $f$. 
To guarantee a strong solution to \eqref{ab eqn} for more general data $f$, we must instead employ an approximation argument that relies on the continuity of the main operator; this approach is detailed in the final section.
Regarding uniqueness, our strategy for proving the uniqueness of the strong solution to \eqref{ab eqn} relies on leveraging the uniqueness of its spatial Fourier weak solution. 
This requires identifying the precise mathematical criteria under which a strong solution also qualifies as a spatial Fourier weak solution. 
While verifying this equivalence is trivial for well-behaved operators like the Laplacian, the severe singularities present in our main operator make this a highly non-trivial problem.
Fortunately, this equivalence is valid under the hypotheses of Lemma \ref{strong solution lem}, which outlines the conditions required for a spatial Fourier weak solution to upgrade to a strong solution. 
The majority of these conditions fundamentally deal with justifying the interchange of mathematical operations, such as swapping integration with the Fourier transform.

\begin{lem}
										\label{strong unique lem}
Let the source term $f$ belong to the space $\mathrm{L}_{2,loc}\left( (0,T) \times \bR^d, w(t)\mathrm{d}t\mathrm{d}x \right)$, where the weight function $w$ satisfies Assumption \ref{weight as}. 
Assume in addition that \eqref{20260811 01} holds.
Assume that $u$ is a strong solution to equation \eqref{ab eqn} and resides in the intersection of the following three local spaces:
$$
\begin{aligned}
&\mathrm{L}_{\infty,2, loc}\left( (0,T) \times \bR^d ,\frac{w(t)}{t}\mathrm{d}t \mathrm{d}x\right),\\
&\mathrm{L}_{2, loc}\left( (0,T) \times \bR^d,\frac{w(t)}{t^2}\mathrm{d}t\mathrm{d}x \right),\\
&\mathrm{H}_{1,2,loc}^{\psi}\left( (0,T) \times \bR^d\right).
\end{aligned}
$$
Under these conditions, $u$ is also the unique spatial Fourier weak solution to \eqref{ab eqn} within this function class. 
Consequently, any strong solution is unique within this intersection space.
\end{lem}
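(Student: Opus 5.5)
The plan is to show that a strong solution $u$ in the stated class satisfies the weak formulation \eqref{weak formulation}. Uniqueness then follows at once from Theorem \ref{uniqueness theorem}: condition \eqref{20260811 01} is assumed, and $u$ lies in $\mathrm{L}_{2,loc}((0,T)\times\bR^d)\cap \mathrm{H}^{\psi}_{1,2,loc}((0,T)\times\bR^d)$. Membership in $\mathrm{L}_{2,loc}$ comes from the $w(t)/t^2$-weighted bound together with Assumption \ref{weight as}, exactly as in the proof of Lemma \ref{strong solution lem}. There, for $s<t$ one has $w(t)\le N_w w(s)$, so $s^{-2}w(s)\gtrsim_{t,N_w}1$ on $(0,t)$. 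The same reasoning places $f$ in $\mathrm{L}_{2,loc}((0,T)\times\bR^d)$. The $\mathrm{L}_{\infty,2,loc}$ condition is only used to guarantee $u(t,\cdot)\in \mathrm{L}_2(\bR^d)$ for a.e.\ $t$.

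The key step is to upgrade the pointwise identity \eqref{20230213 30} to an identity in $\mathrm{L}_2(\bR^d)$. Set $G(s):=\psi(s,-\mathrm{i}\nabla)u(s,\cdot)$. By the definition of $\mathrm{H}^{\psi}_{1,2,loc}$, $G\in \mathrm{L}_1((0,t);\mathrm{L}_2(\bR^d))$ for every $t<T$, so its Bochner integral $\int_0^t G(s)\,\mathrm{d}s$ exists in $\mathrm{L}_2(\bR^d)$. Likewise $f\in \mathrm{L}_2((0,t)\times\bR^d)\subset \mathrm{L}_1((0,t);\mathrm{L}_2(\bR^d))$ by Cauchy--Schwarz in time. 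I must identify the Bochner integral with the pointwise integral $x\mapsto\int_0^t G(s,x)\,\mathrm{d}s$ appearing in \eqref{20230213 30}. For this I would pair with an arbitrary $g\in \mathrm{C}_c^\infty(\bR^d)$ and use Fubini: $\int_0^t\int|G(s,x)||g(x)|\,\mathrm{d}x\,\mathrm{d}s\le \|g\|_{\mathrm{L}_2}\int_0^t\|G(s)\|_{\mathrm{L}_2}\,\mathrm{d}s<\infty$. Hence both integrals have the same pairing with every $g$, and they coincide a.e.\ in $x$. This gives $u(t)=\int_0^tG\,\mathrm{d}s+\int_0^t f\,\mathrm{d}s$ in $\mathrm{L}_2$ for a.e.\ $t$.

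Finally, fix $\varphi\in\cF^{-1}\mathrm{C}_c^\infty(\bR^d)$ and take the $\mathrm{L}_2$ inner product with $\varphi$. By Hille's theorem (\cite[Corollary V.5.2]{Yosida}), the bounded functional $(\cdot,\varphi)$ commutes with the Bochner integral, so
\[
(u(t),\varphi)=\int_0^t (G(s),\varphi)\,\mathrm{d}s+\int_0^t (f(s),\varphi)\,\mathrm{d}s .
\]
It remains to rewrite $(G(s),\varphi)$ as $(u(s),\overline\psi(s,-\mathrm{i}\nabla)\varphi)$ for a.e.\ $s$. By Plancherel, both sides equal $\int \psi(s,\xi)\cF[u(s)](\xi)\overline{\cF[\varphi](\xi)}\,\mathrm{d}\xi$. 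This is legitimate because $\cF[u(s)]\in \mathrm{L}_2$ and, by the second part of \eqref{20260811 01}, $\psi(s,\cdot)\cF[\varphi]\in \mathrm{L}_2$ (the support of $\cF[\varphi]$ is compact). Thus $\overline\psi(s,-\mathrm{i}\nabla)\varphi\in \mathrm{L}_2$ and the pairing is absolutely convergent. This yields \eqref{weak formulation}.

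I expect the main obstacle to be the identification of the pointwise time integral with the Bochner integral. That step needs joint measurability of $G$ and the $\mathrm{L}_1(\mathrm{L}_2)$ bound, and it is exactly where the $\mathrm{H}^{\psi}_{1,2,loc}$ hypothesis enters. Once it is in place, the rest is bookkeeping with Plancherel and Theorem \ref{uniqueness theorem}.
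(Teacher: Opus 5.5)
Your proposal is correct and follows essentially the same route as the paper: bound $\int_0^t\|f(s,\cdot)\|_{\mathrm{L}_2}\,\mathrm{d}s$ and $\int_0^t\|\psi(s,-\mathrm{i}\nabla)u(s,\cdot)\|_{\mathrm{L}_2}\,\mathrm{d}s$, read the strong identity as an identity of $\mathrm{L}_2$-valued Bochner integrals, pair with $\varphi$, move $\psi$ onto $\varphi$ via Plancherel and the second part of \eqref{20260811 01}, and conclude with Theorem \ref{uniqueness theorem}. You add two justifications the paper leaves implicit: the identification of the pointwise time integral with the Bochner integral, and the unweighted $\mathrm{L}_{2,loc}$ membership of $u$ and $f$ required by Theorem \ref{uniqueness theorem}; one small fix is that the lower bound on $w$ must be taken at a good point $\tau\in(t,T)$, as the paper does, because Assumption \ref{weight as} holds only almost everywhere.
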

\begin{proof}
Fix $t<T$.  Choose $\tau\in(t,T)$ such that $0<w(\tau)<\infty$ and the
quasi-decreasing inequality in Assumption \ref{weight as} holds with terminal
time $\tau$.  Then $w(s)\geq w(\tau)/N_w$ for almost every $s<t$.
Consequently,
\[
\int_0^t\|f(s,\cdot)\|_{\mathrm L_2}\,\mathrm{d}s
\leq t^{1/2}\left(\frac{N_w}{w(\tau)}
\int_0^t\|f(s,\cdot)\|_{\mathrm L_2}^2w(s)\,\mathrm{d}s\right)^{1/2}<\infty.
\]
The assumption $u\in\mathrm H_{1,2,loc}^{\psi}$ similarly gives
\[
\int_0^t\|\psi(s,-\mathrm i\nabla)u(s,\cdot)\|_{\mathrm L_2}
\,\mathrm{d}s<\infty.
\]
Thus the strong identity is an equality of $\mathrm L_2$-valued Bochner
integrals for almost every $t$.

Let $\varphi\in\cF^{-1}\mathrm C_c^\infty(\bR^d)$.  Taking its
$\mathrm L_2$ inner product with the strong identity and using the two
preceding estimates gives
\[
(u(t),\varphi)_{\mathrm L_2}
=\int_0^t(\psi(s,-\mathrm i\nabla)u(s),\varphi)_{\mathrm L_2}\,\mathrm{d}s
+\int_0^t(f(s),\varphi)_{\mathrm L_2}\,\mathrm{d}s.
\]
The second part of \eqref{20260811 01} ensures that
$\overline\psi(s,-\mathrm i\nabla)\varphi\in\mathrm L_2$ for almost every
$s$.  Plancherel's theorem then gives
\[
(\psi(s,-\mathrm i\nabla)u(s),\varphi)_{\mathrm L_2}
=(u(s),\overline\psi(s,-\mathrm i\nabla)\varphi)_{\mathrm L_2}.
\]
Substitution proves the weak formulation \eqref{weak formulation}; hence
$u$ is a spatial Fourier weak solution.  Theorem \ref{uniqueness theorem}
now gives uniqueness in the stated class.  Applying the same argument to any
other strong solution in that class proves the final assertion as well.

\end{proof}

\mysection{A dense subclass and continuity of operators}

In this section, we introduce a suitably regular dense subset of $\mathrm{L}_{2,loc}\left( (0,T) \times \bR^d, w(t)\mathrm{d}t\mathrm{d}x \right)$, which plays a pivotal role in constructing a strong solution to \eqref{ab eqn}. 
Our approach relies on a standard approximation argument prevalent in the theory of partial differential equations. 
Specifically, we approximate the general inhomogeneous data $f$ using a sequence of well-behaved data $f_n$ chosen from this dense subset. 
After establishing a strong solution $u_n$ to \eqref{ab eqn} for each regularized source term $f_n$, we demonstrate that the sequence $\{u_n\}$ converges to the desired strong solution corresponding to the original, general data $f$.

\begin{thm}
Let us define the set $\cA$ as the collection of finite sums of separable functions over $(0,T) \times \bR^d$, specifically,
\begin{align}
										\label{20230220 30}
\cA:=\left\{ \sum_{i=1}^n  1_{(a_i,b_i)}(t) f_i(x) : n \in \bN,~ 0<a_i<b_i<T,~ f_i \in \cF^{-1}\mathrm{C}_c^\infty(\bR^d) \right\}.
\end{align}
Then the function class $\cA$ is dense in the mixed-norm space $\mathrm{L}_{p,q}\left( (0,T) \times \bR^d \right)$ for all exponents $p, q \in (1,\infty)$.
Additionally, $\cA$ is dense in  $\mathrm{L}_{p,q,loc}\left( (0,T) \times \bR^d \right)$.
\end{thm}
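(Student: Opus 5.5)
We argue by duality, using the standard Hahn--Banach criterion. Fix $p,q\in(1,\infty)$ and write $p',q'$ for the conjugate exponents. The space $\mathrm{L}_{p,q}\left((0,T)\times\bR^d\right)=\mathrm{L}_p((0,T);L_q(\bR^d))$ is a Banach space whose dual is $\mathrm{L}_{p'}((0,T);L_{q'}(\bR^d))$, since $L_q$ is reflexive and hence has the Radon--Nikodym property. The linear span $\cA$ is therefore dense as soon as every $g\in \mathrm{L}_{p',q'}$ annihilating $\cA$ vanishes. So we fix such a $g$ and assume
$$
\int_0^T\int_{\bR^d} 1_{(a,b)}(t) f(x)\overline{g(t,x)}\,\mathrm{d}x\mathrm{d}t=0
$$
for all $0<a<b<T$ and all $f\in\cF^{-1}\mathrm{C}_c^\infty(\bR^d)$.

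The argument then has two steps.

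\textbf{Step 1: reduce to a time-pointwise statement.} For fixed $f$, set $h_f(t):=\int_{\bR^d} f(x)\overline{g(t,x)}\,\mathrm{d}x$. By H\"older's inequality, $h_f\in \mathrm{L}_{p'}(0,T)\subset \mathrm{L}_{1,loc}(0,T)$. Its integral over every interval $(a,b)$ with $0<a<b<T$ vanishes, so the Lebesgue differentiation theorem gives $h_f(t)=0$ for a.e. $t$.

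\textbf{Step 2: pass to all test functions at once.} The space $\cF^{-1}\mathrm{C}_c^\infty(\bR^d)$ is dense in the separable space $L_q(\bR^d)$ (as remarked after Definition \ref{space weak solution}). We choose a countable subset $\{f_k\}$ of it that is still dense in $L_q$. Removing the countable union of the exceptional null sets from Step 1, we obtain, for a.e. $t$, that $g(t,\cdot)\in L_{q'}$ annihilates every $f_k$. By density, $g(t,\cdot)$ annihilates all of $L_q$, so $g(t,\cdot)=0$. Hence $g=0$, and $\cA$ is dense.

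A direct alternative, which avoids the duality identification, is to approximate first by simple functions $\sum 1_{E_i}(t)\phi_i(x)$ with $\phi_i\in L_q$ (the Bochner-space density of simple functions). One then replaces each $\phi_i$ by an element of $\cF^{-1}\mathrm{C}_c^\infty(\bR^d)$ and each measurable set $E_i$ by a finite union of intervals compactly contained in $(0,T)$; this works because $p<\infty$.

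For the local statement, it suffices to approximate on $(0,T')$ for each fixed $T'<T$ and then take a diagonal sequence over $T'_m\uparrow T$. On $(0,T')$ we apply the global result with $T$ replaced by $T'$. The resulting approximants have intervals $(a_i,b_i)\subset(0,T')\subset(0,T)$, so they belong to $\cA$, and convergence holds for every seminorm indexed by $T''\le T'_m$.

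The main obstacle is justifying the functional-analytic inputs: the dual of the mixed-norm space, the joint measurability of $g$, and the measurability of $t\mapsto h_f(t)$, which follows from Fubini's theorem. I expect these to be routine via H\"older's inequality, and the simple-function route can replace the duality argument if needed.
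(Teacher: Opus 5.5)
Your proposal is correct and follows the same route as the paper. Both use a Hahn--Banach/duality argument showing that any $g\in\mathrm{L}_{p',q'}$ annihilating $\cA$ must vanish, and both deduce the local statement from the global one. Your version is more careful than the paper's one-line conclusion: the Lebesgue differentiation step in $t$ and the countable dense family in $\cF^{-1}\mathrm{C}_c^\infty(\bR^d)$, which avoids an uncountable union of exceptional null sets, are exactly the details the paper leaves implicit.
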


\begin{proof}

Observe that $\mathrm{L}_{p,q}\left( (0,T) \times \bR^d \right)$ is dense in $\mathrm{L}_{p,q,loc}\left( (0,T) \times \bR^d \right)$, as the topology of our local spaces is generated by the family of seminorms on $\mathrm{L}_{p,q}\left( (0,T') \times \bR^d \right)$ for all $T' \in (0,T)$. 
Consequently, the second assertion follows immediately, allowing us to focus solely on proving the main statement.

We proceed by contradiction. Let $p, q \in (1,\infty)$ and assume that the set $\cA$ is not dense in
$$
\mathrm{L}_{p,q}\left( (0,T) \times \bR^d \right).
$$
By standard duality arguments, there must exist a non-zero function 
$$
v \in \mathrm{L}_{p',q'}\left( (0,T) \times \bR^d \right)
$$
—where $p'$ and $q'$ are the Hölder conjugates of $p$ and $q$, respectively—that annihilates $\cA$. Specifically, this means
$$
\sum_{i=1}^n \int_0^T \int_{\bR^d} v(t,x) 1_{(a_i,b_i)}(t) f_i(x) \mathrm{d}x \mathrm{d}t = 0
$$
for all $n \in \bN$, intervals $0 < a_i < b_i < T$, and spatial functions $f_i \in \cF^{-1}\mathrm{C}_c^\infty(\bR^d)$.
It is a known fact that the space $\cF^{-1}\mathrm{C}_c^\infty(\bR^d)$ is dense in $L_{q}(\bR^d)$. Furthermore, any open set $\cU \subset (0,T)$ can be constructed as a countable union of disjoint open intervals $(a_i, b_i)$. 
Because the integral equals zero for all such temporal intervals and over a dense set of spatial test functions, it necessarily follows that $v(t,x) = 0$ for almost every $(t,x) \in (0,T) \times \bR^d$.
This forces $v$ to be the zero function, which directly contradicts our initial assumption that $v$ is a non-zero element of $\mathrm{L}_{p',q'}\left( (0,T) \times \bR^d \right)$. 
Therefore, the set $\cA$ must be dense, completing the proof.
\end{proof}

Although the Laplacian operator $\Delta$ is unbounded on $L_2(\bR^d)$, it is defined on a Sobolev space that is dense within $L_2(\bR^d)$. We aim to identify a similar dense subset in which our strong solution resides. 
Because we are dealing with an evolution equation that incorporates a temporal weight, the necessary formulation is inherently more complex. 
Nevertheless, proving this density remains straightforward because we have already established a highly regular dense subset that naturally embeds into many relevant function spaces. 
The following corollary provides a concrete example demonstrating this property.

\begin{corollary}
									\label{dense cor}
Let $w$ denote a weight function satisfying Assumption \ref{weight as}, and let $\phi$ be a complex-valued function defined almost everywhere on $\bR^d$. 
Under these conditions, the scaled set
$$
\frac{1}{\sqrt{w}} \cA := \left\{ \frac{1}{\sqrt{w(t)}} \sum_{i=1}^n 1_{(a_i,b_i)}(t) f_i(x) : n \in \bN, \; 0 < a_i < b_i < T, \; f_i \in \cF^{-1}\mathrm{C}_c^\infty(\bR^d) \right\}
$$
is dense in the weighted space $\mathrm{L}_{2,loc}\left( (0,T) \times \bR^d, w(t)\mathrm{d}t\mathrm{d}x \right)$.
Then, as a direct result, assuming \eqref{20260811 01} holds, those $f$ belonging to
 $$
\mathrm{L}_{2,loc}\left( (0,T) \times \bR^d, w(t)\mathrm{d}t \mathrm{d}x \right)\cap \cF^{-1}\mathrm{L}_{1,loc}\left( (0,T) \times \bR^d \right), 
$$ 
that satisfy \eqref{20260401 01} form a dense subspace of $\mathrm{L}_{2,loc}\left( (0,T) \times \bR^d, w(t)\mathrm{d}t\mathrm{d}x \right)$.
\end{corollary}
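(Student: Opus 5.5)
The plan has two parts: first the density of $\frac{1}{\sqrt w}\cA$, then the verification that every element of $\frac{1}{\sqrt w}\cA$ satisfies the hypotheses of the second assertion. The second assertion then follows, since a superset of a dense set is dense.

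\emph{Density of $\frac{1}{\sqrt w}\cA$.} Multiplication by $\sqrt{w(t)}$ is the natural tool. Because $w>0$ a.e., the map $g\mapsto \sqrt{w}\,g$ is, for each $T'\in(0,T)$, an isometry from $\mathrm{L}_{2}((0,T')\times\bR^d,w(t)\mathrm{d}t\mathrm{d}x)$ onto $\mathrm{L}_{2}((0,T')\times\bR^d)$. Its inverse is division by $\sqrt w$. So, given $f$ in the weighted local space, I would apply the preceding density theorem (with $p=q=2$, local version) to $\sqrt{w}f\in \mathrm{L}_{2,loc}((0,T)\times\bR^d)$. This gives $g_n\in\cA$ with $g_n\to\sqrt w f$ in every seminorm. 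Then $g_n/\sqrt w\in\frac1{\sqrt w}\cA$ converges to $f$ in every seminorm of the weighted space. This step is routine.

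\emph{Membership in $\cF^{-1}\mathrm{L}_{1,loc}$.} Take $h=w^{-1/2}\sum_{i=1}^n 1_{(a_i,b_i)}f_i$, and set $a:=\min a_i>0$ and $b:=\max b_i<T$. Fix $\tau\in(b,T)$ with $0<w(\tau)<\infty$ at which the quasi-decreasing inequality holds, as in the proof of Lemma \ref{strong unique lem}. Then $w(s)\ge w(\tau)/N_w$ for a.e.\ $s<\tau$, so $w^{-1/2}$ is bounded on $(0,b)$. Choose $R$ with $\operatorname{supp}\cF[f_i]\subset B_R$ for all $i$. This yields the pointwise bound
\[
|\cF[h(s,\cdot)](\xi)|\le C\,1_{(a,b)}(s)1_{B_R}(\xi),
\]
with $C$ depending on $N_w$, $w(\tau)$ and $\max_i\|\cF[f_i]\|_{\mathrm L_\infty}$. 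Hence $h\in\cF^{-1}\mathrm{L}_{1,loc}$. Moreover,
\[
\int_0^\rho|\cF[h(s,\cdot)](\xi)|\,\mathrm{d}s\le C b\,1_{(a,T)}(\rho)1_{B_R}(\xi).
\]

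\emph{Condition \eqref{20260401 01}.} By the last bound, \eqref{20260401 01} for $h$ reduces, for $t>a$, to finiteness of
\[
\int_a^t\Big(\int_{B_R}|\psi(\rho,\xi)|^2\mathrm{d}\xi\Big)^{1/2}\mathrm{d}\rho+\int_a^t\int_{B_R}|\psi(\rho,\xi)|\,\mathrm{d}\xi\,\mathrm{d}\rho .
\]
The second term is finite by the first part of \eqref{20260811 01}.

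The first term is the main obstacle. The second part of \eqref{20260811 01} only says $\psi(\rho,\cdot)\in\mathrm{L}_{2}(B_R)$ for a.e.\ $\rho$; it gives no integrability of $\rho\mapsto\|\psi(\rho,\cdot)\|_{\mathrm L_2(B_R)}$. Time cutoffs of $h$ do not help, because the factor $\int_0^\rho|\cF h|$ does not vanish for $\rho>a$. I would therefore close this step using the square-integrability from Assumption \ref{main as 3}, which is in force wherever the corollary is applied. On $(a,t)$ we have $s^2/w(s)\ge a^2 w(\tau)^{-1}$, with $\tau\in(t,T)$ now chosen as above but exceeding $t$ and $w(\tau)<\infty$, so that $w(s)\le N_w w(a)$ is replaced by the bound $w(s)\lesssim_{a} 1$ obtained from the quasi-decreasing property at a point $a'<a$ with $w(a')<\infty$. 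Then Cauchy--Schwarz gives
\[
\int_a^t\|\psi(\rho,\cdot)\|_{\mathrm L_2(B_R)}\mathrm{d}\rho\le (t-a)^{1/2}\Big(\int_a^t\int_{B_R}|\psi|^2\mathrm{d}\xi\mathrm{d}\rho\Big)^{1/2}\lesssim_{a,t,w}\Big(\int_0^t\int_{B_R}|\psi|^2\tfrac{s^2}{w(s)}\mathrm{d}\xi\mathrm{d}s\Big)^{1/2}<\infty .
\]
With this, every element of $\frac1{\sqrt w}\cA$ lies in the subspace described in the corollary, and that subspace is therefore dense by the first part.
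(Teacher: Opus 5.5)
Your proposal is correct and follows the paper's argument. You rescale by $\sqrt{w}$ for density, bound $\int_0^\rho|\cF[h(s,\cdot)](\xi)|\,\mathrm{d}s$ by $C\,1_{(a,T)}(\rho)1_{B_R}(\xi)$, and control the $\mathrm{L}_2(B_R)$ term by Cauchy--Schwarz using an upper bound on $w$ over $(a,t)$. You are also right that this last step needs the weighted square-integrability of Assumption \ref{main as 3}, not \eqref{20260811 01} alone: the paper's proof cites \eqref{20260811 01} but actually uses the finiteness of $\int\int|\psi|^2\rho^2/w$, and it relies on Assumption \ref{main as 3} when it applies the corollary in Theorem \ref{main thm 3}. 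One small fix: $s^2/w(s)\ge a^2w(\tau)^{-1}$ points the wrong way, since a lower bound on $w$ only bounds $1/w$ from above. The inequality you need, and then use, is $s^2/w(s)\ge a^2/(N_w\,w(a'))$ for a.e.\ $s\in(a,t)$, with $a'<a$ and $w(a')<\infty$.
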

\begin{proof}
First, observe that if a function $f$ belongs to the weighted space $\mathrm{L}_{2,loc}\left( (0,T) \times \bR^d, w(t)\mathrm{d}t\mathrm{d}x \right)$, then the product $f \sqrt{w}$ naturally resides in the standard, unweighted space $\mathrm{L}_{2,loc}\left( (0,T) \times \bR^d \right)$.
By the preceding theorem, the class $\cA$ is dense in $\mathrm{L}_{2,loc}\left( (0,T) \times \bR^d \right)$. 
Therefore, the function $f \sqrt{w}$ can be approximated by elements from $\cA$. Dividing this approximation by $\sqrt{w}$ implies that the set $\frac{1}{\sqrt{w}} \cA$ is dense in $\mathrm{L}_{2,loc}\left( (0,T) \times \bR^d, w(t)\mathrm{d}t\mathrm{d}x \right)$.

It remains to prove the second assertion. 
Let $\mathscr{D}$ denote the class of functions appearing therein. 
Since $\cA/\sqrt w$ is dense in the weighted space, it suffices to prove that
$$
\frac{1}{\sqrt w}\cA\subset\mathscr D.
$$
Fix
$$
g(s,x)
=
\frac{1}{\sqrt{w(s)}}
\sum_{i=1}^n1_{(a_i,b_i)}(s)f_i(x)
\in\frac{1}{\sqrt w}\cA,
$$
and set
$$
a_*:=\min_{1\leq i\leq n}a_i,
\qquad
b_*:=\max_{1\leq i\leq n}b_i,
\qquad
K:=\bigcup_{i=1}^n\operatorname{supp}\cF[f_i].
$$
By Assumption~\ref{weight as}, $w^{-1/2}$ is essentially bounded on
$(a_*,b_*)$. Since $\cF[f_i]\in C_c^\infty(\bR^d)$, it follows immediately
that
$$
g\in
\mathrm{L}_{2,loc}\left((0,T)\times\bR^d,w(t)\,\mathrm dt\,\mathrm dx\right)
\cap
\cF^{-1}\mathrm{L}_{1,loc}\left((0,T)\times\bR^d\right).
$$

Define
$$
G(\rho,\xi)
:=
\int_0^\rho
\left|\cF[g(s,\cdot)](\xi)\right|\,\mathrm ds.
$$
For some constant $C_g>0$,
$$
G(\rho,\xi)
\leq
C_g1_{[a_*,T)}(\rho)1_K(\xi).
$$
Thus, when $t\leq a_*$, both terms in \eqref{20260401 01} vanish. For
$t>a_*$, \eqref{20260811 01}, the local boundedness of $w$, and the
Cauchy--Schwarz inequality yield
\begin{align*}
\int_{a_*}^t
\left(
    \int_K|\psi(\rho,\xi)|^2\,\mathrm d\xi
\right)^{1/2}
\mathrm d\rho
&\leq
\left(
    \int_{a_*}^t\int_K
    |\psi(\rho,\xi)|^2\frac{\rho^2}{w(\rho)}
    \,\mathrm d\xi\,\mathrm d\rho
\right)^{1/2}
\left(
    \int_{a_*}^t\frac{w(\rho)}{\rho^2}\,\mathrm d\rho
\right)^{1/2}
<\infty.
\end{align*}
Moreover, since $K$ has finite measure,
$$
\int_{a_*}^t\int_K|\psi(\rho,\xi)|\,\mathrm d\xi\,\mathrm d\rho
\leq
|K|^{1/2}
\int_{a_*}^t
\left(
    \int_K|\psi(\rho,\xi)|^2\,\mathrm d\xi
\right)^{1/2}
\mathrm d\rho
<\infty.
$$
Together with the bound on $G$, these estimates imply \eqref{20260401 01}. 
Hence $\cA/\sqrt w\subset\mathscr D$, and the second assertion follows from the density of $\cA/\sqrt w$.
\end{proof}

Next, our objective is to formulate a straightforward criterion that ensures the continuity of the principal operator $\psi(t,-\mathrm{i}\nabla)$ as a mapping from the weighted local $\phi$-potential space $\mathrm{H}^\phi_{2,loc}\left((0,T)\times \bR^d , w(t) \mathrm{d}t\mathrm{d}x\right)$ into $\mathrm{L}_{1,2,loc}\left((0,T)\times \bR^d \right)$. 
Establishing this continuity property is a vital component in proving the existence of a strong solution. 
To consolidate all the fundamental tools required for this construction, we present the following theorem in this section, despite it being largely independent of the regular dense subset established previously. 
Additionally, the subsequent corollary is stated in a somewhat broader context to facilitate its use in future applications.

\begin{thm}
										\label{continuity operator}
Let $w$, $\phi$, and $\psi$ be complex-valued functions defined almost everywhere on $(0,T)$, $\bR^d$, and $(0,T) \times \bR^d$, respectively, and assume that $w(t)\ne 0$ for almost every $t\in(0,T)$. We use the convention $\psi(t,\xi)/\phi(\xi)=0$ when $\psi(t,\xi)=\phi(\xi)=0$, and interpret its modulus as $+\infty$ when $\phi(\xi)=0$ but $\psi(t,\xi)\ne0$. 
Suppose that the following integrability condition holds:
\begin{align}
										\label{20260406 10}
\int_0^t \esssup_{\xi \in \bR^d} \left| \frac{\psi(s,\xi)}{\phi(\xi)} \right|^2 \frac{1}{|w(s)|} \mathrm{d}s < \infty \quad \text{for all }t\in(0,T).
\end{align}
Under this assumption, the mapping $u \mapsto \psi(t,-\mathrm{i}\nabla)u$ is a continuous operator from the weighted space $\mathrm{H}^\phi_{2,loc}\left((0,T)\times \bR^d , |w(t)| \mathrm{d}t\mathrm{d}x\right)$ into $\mathrm{L}_{1,2,loc}\left((0,T)\times \bR^d \right)$. 
More precisely, it satisfies the following quantitative bound:
\begin{align*}
&\int_0^t \left( \int_{\bR^d} \left| \psi(s,-\mathrm{i}\nabla) u(s,x) \right|^2 \mathrm{d}x \right)^{1/2} \mathrm{d}s \\
&\leq \left(\int_0^t \esssup_{\xi \in \bR^d} \left| \frac{\psi(s,\xi)}{\phi(\xi)} \right|^2 \frac{1}{|w(s)|} \mathrm{d}s \right)^{1/2} \left( \int_0^t \int_{\bR^d} \left| \phi(-\mathrm{i}\nabla) u(s,x) \right|^2 \mathrm{d}x |w(s)| \mathrm{d}s \right)^{1/2}.
\end{align*}
Consequently, for any function $u \in \mathrm{H}^\phi_{2,loc}\left((0,T)\times \bR^d , |w(t)| \mathrm{d}t\mathrm{d}x\right)$, the term $\psi(t,-\mathrm{i}\nabla) u(t,x)$ is well-defined almost everywhere on $(0,T) \times \bR^d$. 
This establishes the continuous embedding
$$
\mathrm{H}^\phi_{2,loc}\left((0,T)\times \bR^d , |w(t)| \mathrm{d}t\mathrm{d}x\right) \hookrightarrow \mathrm{H}_{1,2,loc}^{\psi}\left( (0,T) \times \bR^d\right).
$$
\end{thm}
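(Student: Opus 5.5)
The plan is to work frequency-wise via Plancherel, factor the symbol as $\psi=(\psi/\phi)\,\phi$ using the stated $0/0$ convention, and then apply Cauchy--Schwarz in time against the weight. Fix $t\in(0,T)$ and $u\in\mathrm{H}^\phi_{2,loc}((0,T)\times\bR^d,|w(t)|\mathrm{d}t\mathrm{d}x)$, and write $M(s):=\esssup_{\xi}|\psi(s,\xi)/\phi(\xi)|$.

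\textbf{Step 1: the pointwise factorization.} First observe that \eqref{20260406 10} forces $M(s)<\infty$ for a.e. $s\in(0,t)$. By the convention that the modulus is $+\infty$ whenever $\phi(\xi)=0\ne\psi(s,\xi)$, this means that, for such $s$, the set $\{\xi:\phi(\xi)=0,\ \psi(s,\xi)\neq0\}$ is null. Hence for a.e. $s$ and a.e. $\xi$,
$$|\psi(s,\xi)\cF[u(s,\cdot)](\xi)|\le M(s)\,|\phi(\xi)\cF[u(s,\cdot)](\xi)|.$$
Measurability of $s\mapsto M(s)$ needs a brief remark; one can take a jointly measurable version, or simply work with the integral appearing in the hypothesis. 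Because $w\ne0$ a.e. and $u\in\mathrm{H}^\phi_{2,loc}$, for a.e. $s$ we have $u(s,\cdot)\in L_2$ and $\phi\cF[u(s,\cdot)]\in L_2$. So $\psi(s,\cdot)\cF[u(s,\cdot)]\in L_2$ for a.e. $s$, which means $\psi(s,-\mathrm{i}\nabla)u(s,\cdot)$ is a well-defined $L_2$ function.

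\textbf{Step 2: Plancherel and Cauchy--Schwarz.} Plancherel gives
$$\|\psi(s,-\mathrm{i}\nabla)u(s,\cdot)\|_{L_2}\le M(s)\,\|\phi(-\mathrm{i}\nabla)u(s,\cdot)\|_{L_2}.$$
Next write $M(s)=M(s)|w(s)|^{-1/2}\cdot|w(s)|^{1/2}$ and apply Cauchy--Schwarz on $(0,t)$. This yields exactly the stated inequality, with the first factor finite by \eqref{20260406 10} and the second finite by membership in $\mathrm{H}^\phi_{2,loc}$.

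\textbf{Step 3: continuity and joint measurability.} Continuity of the map follows by applying the same bound to $u-v$, using linearity of the operator. The resulting seminorm estimate, valid for each $t$, gives continuity between the Fréchet spaces and therefore the embedding into $\mathrm{H}^\psi_{1,2,loc}$.

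The one delicate point is the claim that $\psi(t,-\mathrm{i}\nabla)u(t,x)$ is jointly measurable and defined a.e. on $(0,T)\times\bR^d$. I would handle this by taking a jointly measurable version of $\cF_x u$ and approximating with truncations $1_{|\xi|<R}$ and bounded cutoffs, so that the inverse Fourier transform is given by an $L_1$ integral. For those truncations joint measurability is clear from Fubini, and $L_2$ convergence for a.e. $s$ then passes measurability to the limit along a subsequence.
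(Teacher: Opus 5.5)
Your argument is correct and is essentially the paper's proof. The paper also uses Plancherel, the pointwise bound $|\psi(s,\xi)|\le M(s)|\phi(\xi)|$, and Cauchy--Schwarz in time after writing $M(s)=M(s)|w(s)|^{-1/2}\cdot|w(s)|^{1/2}$; your handling of the $0/0$ convention matches Remark~\ref{degenerate operator}. Your joint-measurability remark goes beyond what the paper checks, and if you keep it, choose the truncation subsequence $R_k$ independently of $s$ (for instance so that the $\mathrm L_2$-errors are summable for a.e.\ $s$), since a subsequence picked separately for each $s$ does not give a jointly measurable limit.
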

\begin{proof}
This result is a direct consequence of applying Plancherel's theorem followed by the Cauchy-Bunyakovsky-Schwarz inequality. 
By transforming into the frequency domain and estimating the integrals, we deduce
\begin{align*}
 &\int_0^t \left( \int_{\bR^d}  \left| \psi(s,-\mathrm{i}\nabla) u(s,x) \right|^2 \mathrm{d}x  \right)^{1/2} \mathrm{d}s  \\
&= \int_0^t \left( \int_{\bR^d}  \left| \psi(s,\xi) \cF[u(s,\cdot)](\xi) \right|^2 \mathrm{d}\xi  \right)^{1/2} \mathrm{d}s\\
&\leq  \int_0^t  \esssup_{\xi \in \bR^d} \left(\left| \frac{\psi(s,\xi)}{\phi(\xi)} \right|\right) \left( \int_{\bR^d}  \left| \phi(\xi) \cF[u(s,\cdot)](\xi) \right|^2 \mathrm{d}\xi  \right)^{1/2} \mathrm{d}s \\
&\leq  \left(\int_0^t  \esssup_{\xi \in \bR^d} \left(\left| \frac{\psi(s,\xi)}{\phi(\xi)} \right|\right)^2\frac{1}{|w(s)|} \mathrm{d}s \right)^{1/2}
\left( \int_0^t \int_{\bR^d}  \left| \phi(\xi) \cF[u(s,\cdot)](\xi) \right|^2 \mathrm{d}\xi  |w(s)|\mathrm{d}s \right)^{1/2} \\
&=  \left(\int_0^t  \esssup_{\xi \in \bR^d} \left(\left| \frac{\psi(s,\xi)}{\phi(\xi)} \right|\right)^2 \frac{1}{|w(s)|} \mathrm{d}s \right)^{1/2}
\left( \int_0^t \int_{\bR^d}  \left| \phi(-\mathrm{i}\nabla) u(s,x) \right|^2 \mathrm{d}x |w(s)| \mathrm{d}s \right)^{1/2}.
\end{align*}
This completes the proof.
\end{proof}

\begin{rem}
										\label{degenerate operator}
We reiterate that $\phi$ is permitted to degenerate in \eqref{20260406 10}.
 In the event that $\phi(\xi)=0$ and $\psi(t,\xi)=0$, the inequality
$$
|\psi(t,\xi)| \leq  |\phi(\xi)| \esssup_{\eta \in \bR^d} \left| \frac{\psi(t,\eta)}{\phi(\eta)} \right|
$$
remains valid under the convention stated in Theorem \ref{continuity operator}. 

\end{rem}

\mysection{Proofs of main theorems}
									\label{pf main thm}

We begin this section by establishing the inclusion relationships among the various notions of solutions mentioned in Remark \ref{solution inculsion remark}. 
Since most of these results follow either from direct verification or from previously established facts, our primary goal here is simply to summarize them in the subsequent lemma.

\begin{lem}
										\label{inculsuion solutions lemma}
Let $f \in \mathrm{L}_{2,loc}((0,T) \times \bR^d, w(t)\mathrm{d}t\mathrm{d}x)$ and suppose $w$ is a weight satisfying Assumption \ref{weight as}. Assume also that \eqref{20260811 01} holds.
If $u$ is a strong solution to \eqref{ab eqn} belonging to the functional classes
$$
\begin{aligned}
&\mathrm{L}_{\infty,2, loc}\left( (0,T) \times \bR^d ,\frac{w(t)}{t}\mathrm{d}t \mathrm{d}x\right)\\
&\quad\cap \mathrm{L}_{2, loc}\left( (0,T) \times \bR^d,\frac{w(t)}{t^2}\mathrm{d}t\mathrm{d}x \right)\\
&\quad\cap \mathrm{H}_{1,2,loc}^{\psi}\left( (0,T) \times \bR^d\right),
\end{aligned}
$$
then $u$ is also a spatial Fourier weak solution to \eqref{ab eqn}.
\end{lem}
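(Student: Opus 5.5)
This statement is essentially the first half of Lemma \ref{strong unique lem}, so the plan is to reproduce its argument, or simply invoke that lemma, while making each interchange of operations explicit.

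\textbf{Step 1 (Bochner integrability in $\mathrm L_2$).} Fix $t\in(0,T)$ and choose $\tau\in(t,T)$ with $0<w(\tau)<\infty$. Assumption \ref{weight as} then gives $w(s)\ge w(\tau)/N_w$ for a.e.\ $s<t$. By Cauchy--Schwarz in time,
\[
\int_0^t\|f(s,\cdot)\|_{\mathrm L_2}\,\mathrm ds\le t^{1/2}\Big(\tfrac{N_w}{w(\tau)}\int_0^t\|f(s,\cdot)\|_{\mathrm L_2}^2w(s)\,\mathrm ds\Big)^{1/2}<\infty .
\]
Membership $u\in\mathrm H^{\psi}_{1,2,loc}$ means exactly that $s\mapsto\psi(s,-\mathrm i\nabla)u(s,\cdot)$ lies in $\mathrm L_1((0,t);\mathrm L_2(\bR^d))$. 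The weighted memberships, together with the same lower bound on $w$, give $u\in\mathrm L_{2,loc}((0,T)\times\bR^d)$, so $u(t,\cdot)\in\mathrm L_2$ for a.e.\ $t$.

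\textbf{Step 2 (identifying the strong identity).} The pointwise identity \eqref{20230213 30} then holds as an identity of $\mathrm L_2(\bR^d)$-valued Bochner integrals for a.e.\ $t$. The one delicate point is that the pointwise time integral, evaluated at a.e.\ $x$, coincides with the Bochner integral. I would justify this by pairing both sides with arbitrary $g\in\mathrm L_2$ and applying Fubini. This is legitimate because
\[
\int_0^t\int_{\bR^d}|G(s,x)g(x)|\,\mathrm dx\,\mathrm ds\le\|g\|_{\mathrm L_2}\int_0^t\|G(s)\|_{\mathrm L_2}\,\mathrm ds<\infty
\]
for $G=\psi(\cdot,-\mathrm i\nabla)u+f$. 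Hille's theorem (continuity of the functional) then identifies the two integrals.

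\textbf{Step 3 (testing against $\varphi$).} Take $\varphi\in\cF^{-1}\mathrm C_c^\infty(\bR^d)$ and pair the identity with $\varphi$, moving the inner product inside the time integral via Step 2. For a.e.\ $s$, the second part of \eqref{20260811 01} gives $\overline\psi(s,\cdot)\cF[\varphi]\in\mathrm L_2$, since $\cF[\varphi]$ is bounded with compact support. Plancherel then yields
\[
(\psi(s,-\mathrm i\nabla)u(s),\varphi)_{\mathrm L_2}=\int\psi\,\cF[u(s)]\,\overline{\cF[\varphi]}\,\mathrm d\xi=(u(s),\overline\psi(s,-\mathrm i\nabla)\varphi)_{\mathrm L_2}.
\]
Substituting this into the paired identity gives \eqref{weak formulation}.

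I expect Step 2 to be the main obstacle: making rigorous that the a.e.-$x$ time integral and the $\mathrm L_2$-valued Bochner integral agree. Beyond that check, the argument is routine bookkeeping with Plancherel and Fubini.
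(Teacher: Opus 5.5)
Your proposal is correct and follows the paper's route. The paper proves this lemma by citing Lemma \ref{strong unique lem}, whose argument is exactly your Steps 1 and 3: a lower bound $w\geq w(\tau)/N_w$ from a well-chosen $\tau$, Bochner integrability of $f$ and $\psi(\cdot,-\mathrm{i}\nabla)u$ in $\mathrm{L}_2$, pairing with $\varphi$, and Plancherel using the second part of \eqref{20260811 01}. Your Step 2 adds an explicit justification, via Fubini against arbitrary $g\in\mathrm{L}_2$, that the pointwise time integral coincides with the $\mathrm{L}_2$-valued Bochner integral; the paper asserts this identification without comment.
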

\begin{proof}
This assertion was established in Lemma \ref{strong unique lem}.
\end{proof}

We are now ready to prove our main theorems. Rather than following their initial order of presentation, we have sequenced the proofs to optimize logical flow. 
Consequently, we begin with the proof of Theorem \ref{main thm 2}, as it directly builds upon previous results and the groundwork already established in Section \ref{ex uni section}.

\vspace{2mm}
\begin{proof}[Proof of Theorem \ref{main thm 2}]

We first verify all hypotheses used below.  The first term in Assumption
\ref{main as 3} and Fubini's theorem show that, for almost every $\xi$,
$\psi(\cdot,\xi)\in\mathrm L_1(0,t)$ for every $t<T$; in particular,
Assumption \ref{main as 2} holds.  The same term gives the first part of
\eqref{20260811 01}.  Since $s^2/w(s)>0$ almost everywhere, the second term
in Assumption \ref{main as 3}, again by Fubini, gives
$\psi(s,\cdot)\in\mathrm L_{2,loc}(\bR^d)$ for almost every $s$.  Thus the
second part of \eqref{20260811 01} also holds.

To apply Corollary \ref{weak well weight}, it remains to verify condition
\eqref{20260324 01}. That is, we must show that
\begin{align*}
\int_{B_R} \int_0^t |\psi(\rho,\xi)| \int_0^\rho |\cF[f(s,\cdot)](\xi)| \mathrm{d}s \mathrm{d}\rho \mathrm{d}\xi < \infty \quad \forall R \in (0,\infty)~\text{and}~ \forall t \in (0,T).
\end{align*}
Applying Fubini's theorem, the Cauchy--Schwarz inequality, Hardy's inequality, and Plancherel's theorem in sequence, we obtain the following bounds:
\begin{align*}
&\int_{B_R} \int_0^t |\psi(\rho,\xi)| \int_0^\rho |\cF[f(s,\cdot)](\xi)| \mathrm{d}s \mathrm{d}\rho \mathrm{d}\xi \\
&= \int_0^t\int_0^\rho \int_{B_R} |\psi(\rho,\xi)|  |\cF[f(s,\cdot)](\xi)|\mathrm{d}\xi \mathrm{d}s \mathrm{d}\rho  \\
&\leq \int_0^t\int_0^\rho \left(\int_{B_R} |\psi(\rho,\xi)|^2 \mathrm{d}\xi\right)^{1/2} \left(\int_{\bR^d} |\cF[f(s,\cdot)](\xi)|^2 \mathrm{d}\xi \right)^{1/2} \mathrm{d}s \mathrm{d}\rho  \\
&\leq N_w \int_0^t\left(\int_{B_R} \frac{\rho^2}{w(\rho)} |\psi(\rho,\xi)|^2 \mathrm{d}\xi\right)^{1/2} \frac{1}{\rho} \int_0^\rho  \left(\int_{\bR^d} |\cF[f(s,\cdot)](\xi)|^2 \mathrm{d}\xi \right)^{1/2} \sqrt{w(s)} \mathrm{d}s \mathrm{d}\rho  \\
&\leq  N_w \left(\int_0^t \int_{B_R} \frac{s^2}{w(s)}|\psi(s,\xi)|^2 \mathrm{d}\xi \mathrm{d}s \right)^{1/2} \left(\int_0^t \int_{\bR^d} |\cF[f(s,\cdot)](\xi)|^2 \mathrm{d}\xi w(s) \mathrm{d}s \right)^{1/2}  \\
&\leq  N_w \left(\int_0^t \int_{B_R} \frac{s^2}{w(s)}|\psi(s,\xi)|^2 \mathrm{d}\xi \mathrm{d}s \right)^{1/2} \left(\int_0^t \int_{\bR^d} |f(s,x)|^2 \mathrm{d}x w(s) \mathrm{d}s \right)^{1/2} < \infty.
\end{align*}
Corollary \ref{weak well weight} now gives the solution \eqref{unique solution}
and the estimates \eqref{a priori est 2-2}--\eqref{a priori est 2-3}.

It remains to prove uniqueness in the class stated in the theorem.  Let
$u_1,u_2$ be two spatial Fourier weak solutions satisfying those estimates
and put $v=u_1-u_2$.  For every $R>0$ and $t<T$, Plancherel's theorem and
Cauchy--Schwarz give
\begin{align*}
&\int_0^t\int_{B_R}|\psi(s,\xi)|\,|\cF[v(s,\cdot)](\xi)|
\,\mathrm{d}\xi\mathrm{d}s\\
&\quad\leq
\left(\int_0^t\int_{B_R}\frac{s^2}{w(s)}|\psi(s,\xi)|^2
\,\mathrm{d}\xi\mathrm{d}s\right)^{1/2}
\left(\int_0^t\int_{\bR^d}\frac{w(s)}{s^2}
|\cF[v(s,\cdot)](\xi)|^2\,\mathrm{d}\xi\mathrm{d}s\right)^{1/2}<\infty.
\end{align*}
Subtracting the two weak formulations and using a countable dense family of
frequency test functions on each $B_R$ therefore yields
\[
\cF[v(t,\cdot)](\xi)
=\int_0^t\psi(s,\xi)\cF[v(s,\cdot)](\xi)\,\mathrm{d}s
\quad\text{for a.e. }(t,\xi).
\]
The already established time integrability of $\psi(\cdot,\xi)$ and the
integrating-factor argument from Theorem \ref{uniqueness theorem} imply
$v=0$.  Hence the solution is unique among spatial Fourier weak solutions
satisfying \eqref{a priori est 2-2}--\eqref{a priori est 2-3}.
\end{proof}

\vspace{3mm}
\begin{proof}[Proof of Theorem \ref{main thm}]

Since the a priori estimate \eqref{a priori est 2-2} was established in Corollary \ref{l2 bounded corollary 1}, our remaining task is simply to demonstrate that the function $u$ defined in \eqref{u defn} is the unique spectral-limit solution within the specified classes. 
We will address this in two stages: existence and uniqueness.

\vspace{2mm}
{\bf I. Existence.}
\vspace{2mm}

We first verify that Theorem \ref{main thm 2} applies to every truncated
symbol.  Since $\Re\psi\leq0$, the definition of $\psi_n$ gives
$\Re\psi_n\leq0$ and $|\psi_n|\leq\sqrt{2}\,n$.  Fix $t<T$ and choose
$\tau\in(t,T)$ as in the proof of Lemma \ref{strong unique lem}.  The
quasi-decreasing property gives $w(s)\geq w(\tau)/N_w$ for almost every
$s<t$.  Hence, for every $R>0$,
\begin{align*}
&\int_0^t\int_{B_R}\left(|\psi_n(s,\xi)|
+|\psi_n(s,\xi)|^2\frac{s^2}{w(s)}\right)\,\mathrm{d}\xi\mathrm{d}s\\
&\quad\leq |B_R|\left(\sqrt{2}\,nt
+\frac{2n^2N_w}{w(\tau)}\int_0^t s^2\,\mathrm{d}s\right)<\infty.
\end{align*}
Thus $\psi_n$ satisfies Assumptions \ref{main as} and \ref{main as 3}.
Theorem \ref{main thm 2}, applied to each bounded truncated symbol, yields a spatial Fourier weak solution to the corresponding truncated problem.
Specifically, for any test function $\varphi \in \cF^{-1}\mathrm{C}_c^\infty(\bR^d)$, 
there exists a spatial Fourier weak solution $u_n$  so that the following identity holds for almost every $t \in (0,T)$:
\begin{align*}
\left(u_n(t,\cdot),\varphi\right)_{\mathrm{L}_2(\bR^d)} =  \int_0^t \left(u_n(s,\cdot) , \overline \psi_n(s,-\mathrm{i}\nabla)\varphi \right)_{\mathrm{L}_2(\bR^d)} \mathrm{d}s 
+ \int_0^t \left( f(s,\cdot),\varphi\right)_{\mathrm{L}_2(\bR^d)} \mathrm{d}s
\quad a.e.~t\in (0,T),
\end{align*}
where the pseudo-differential operator is defined by
\begin{align*}
\psi_{n}(s,-\mathrm{i}\nabla)u_n(s,x)
=\cF^{-1} \left[  \psi_{n}(s,\cdot)  \cF[u_n(s,\cdot)]\right](x),
\end{align*}
and its corresponding truncated symbol is the symbol $\psi_n$ defined in Definition \ref{spectral-limit soluion}.
Theorem \ref{main thm 2} further guarantees that for each integer $n$, the solution takes the explicit form:
\begin{align*}
u_n(t,x)=  \cF^{-1}\left[ \int_0^t  \exp\left(\int_s^t\psi_n(r,\cdot)\mathrm{d}r \right) \cF[f(s,\cdot)]\mathrm{d}s \right](x).
\end{align*}
To establish existence, it is sufficient to prove that the sequence $\{u_n\}$ converges to the target function
\begin{align*}
u(t,x)=  \cF^{-1}\left[ \int_0^t  \exp\left(\int_s^t\psi(r,\cdot)\mathrm{d}r \right) \cF[f(s,\cdot)]\mathrm{d}s \right](x)
\end{align*}
in $\mathrm{L}_{2,loc}\left( (0,T) \times \bR^d\right)$.
Furthermore, under Assumption \ref{weight as}, we have the continuous embedding:
\begin{align}
										\label{20260324 50}
\mathrm{L}_{2,loc}\left( (0,T) \times \bR^d, w(t)\mathrm{d}t \mathrm{d}x \right) \hookrightarrow \mathrm{L}_{2,loc}\left( (0,T) \times \bR^d\right).
\end{align}
Consequently, our goal reduces to demonstrating that $u_n$ converges to $u$ in the weighted space 
$$
\mathrm{L}_{2,loc}\left( (0,T) \times \bR^d, w(t)\mathrm{d}t \mathrm{d}x \right).
$$
Fix $T'\in(0,T)$ and write
\[
E_n(t,s,\xi)=\exp\left(\int_s^t\psi_n(r,\xi)\,\mathrm{d}r\right),
\qquad
E(t,s,\xi)=\exp\left(\int_s^t\psi(r,\xi)\,\mathrm{d}r\right).
\]
For almost every $\xi$ and every $0<s<t<T'$, the definition of the truncation gives $\psi_n(r,\xi)\to\psi(r,\xi)$ and $|\psi_n(r,\xi)|\leq|\psi(r,\xi)|$. Assumption \ref{main as 2} therefore implies, by dominated convergence in $r$, that $E_n(t,s,\xi)\to E(t,s,\xi)$. Moreover, the non-positivity of the real parts gives $|E_n(t,s,\xi)|\leq1$ and $|E(t,s,\xi)|\leq1$.

Set
\[
G_n(t,\xi)=\int_0^t\bigl(E_n(t,s,\xi)-E(t,s,\xi)\bigr)\cF[f(s,\cdot)](\xi)\,\mathrm{d}s.
\]
For almost every $(t,\xi)$, dominated convergence in $s$ yields $G_n(t,\xi)\to0$. In addition, Jensen's inequality and the quasi-decreasing property of $w$ give
\begin{align*}
w(t)|G_n(t,\xi)|^2
&\leq 4t\,w(t)\int_0^t|\cF[f(s,\cdot)](\xi)|^2\,\mathrm{d}s\\
&\leq 4N_wT'\int_0^{T'}|\cF[f(s,\cdot)](\xi)|^2w(s)\,\mathrm{d}s.
\end{align*}
The last expression is independent of $t$ and is integrable on $(0,T')\times\bR^d$. Hence the dominated convergence theorem and Plancherel's identity yield
\begin{align*}
\int_0^{T'}\int_{\bR^d}|u_n(t,x)-u(t,x)|^2w(t)\,\mathrm{d}x\,\mathrm{d}t
=\int_0^{T'}\int_{\bR^d}|G_n(t,\xi)|^2w(t)\,\mathrm{d}\xi\,\mathrm{d}t
\longrightarrow0.
\end{align*}
Thus $u_n\to u$ in the required weighted local space, proving existence.

\vspace{2mm}
{\bf II. Uniqueness.}
\vspace{2mm}

By the embedding \eqref{20260324 50}, establishing uniqueness in the broader space $\mathrm{L}_{2,loc}\left( (0,T) \times \bR^d\right)$ is sufficient to guarantee uniqueness in our target space.
Let $u$ and $v$ be two spectral-limit solutions to \eqref{ab eqn}. By definition, there exist approximating sequences $\{u_n\}$ and $\{v_n\}$ in $\mathrm{L}_{2,loc}\left( (0,T) \times \bR^d \right)$ converging to $u$ and $v$, respectively. 
For each $n \in \bN$, both $u_n$ and $v_n$ serve as spatial Fourier weak solutions to \eqref{ab eqn} corresponding to the truncated symbol $\psi_n$ defined in Definition \ref{spectral-limit soluion}.
Since the truncated symbol $\psi_n$ is bounded, all hypotheses of Theorem \ref{main thm 2} are satisfied. 
Furthermore, Theorem \ref{uniqueness theorem} ensures that each truncated problem enjoys uniqueness within the class
\begin{align*}
\mathrm{L}_{2,loc}\left( (0,T) \times \bR^d \right) \cap \cF^{-1}\mathrm{L}_{1,2,loc}\left( (0,T) \times \bR^d, \vert{}\psi_n(t,\xi)\vert{}^2 \mathrm{d}t \mathrm{d}\xi \right) = \mathrm{L}_{2,loc}\left( (0,T) \times \bR^d \right).
\end{align*}
Consequently, Theorem \ref{main thm 2} yields an explicit, deterministic representation formula for $u_n$ and $v_n$, implying that $u_n = v_n$ for all $n$:
\begin{align*}
u_n(t,x) = \cF^{-1}\left[ \int_0^t \exp\left(\int_s^t \psi_n(r,\cdot)\mathrm{d}r \right) \cF[f(s,\cdot)]\mathrm{d}s \right] 
= v_n(t,x).
\end{align*}
Since $u_n = v_n$ pointwise for every $n$, their limits in $\mathrm{L}_{2,loc}\left( (0,T) \times \bR^d \right)$ must coincide. We conclude that $u(t,x) = v(t,x)$ almost everywhere on $(0,T) \times \bR^d$.

\end{proof}

\vspace{3mm}
\begin{proof}[Proof of Theorem \ref{main thm 3}]

\vspace{2mm}
{\bf I. Uniqueness and Setup}
\vspace{2mm}

As shown at the beginning of the proof of Theorem \ref{main thm 2},
Assumption \ref{main as 3} implies condition \eqref{20260811 01}.
The a priori bounds given in \eqref{a priori est 00} and \eqref{a priori est} have already been established by Lemma \ref{l2 bounded lemma} and Theorem \ref{continuity operator}.
 Furthermore, the combination of Theorem \ref{continuity operator} and Lemma \ref{strong unique lem} guarantees that any strong solution must be unique. Therefore, the remaining task is simply to demonstrate that a strong solution to \eqref{ab eqn} actually exists within the specified function class.

\vspace{2mm}
{\bf II. Existence via Approximation}
\vspace{2mm}

To prove existence, we use the first density assertion in Corollary \ref{dense cor} to choose a sequence $f_n\in\frac{1}{\sqrt w}\cA$
that converges to $f$ in
$\mathrm{L}_{2,loc}\left((0,T)\times\bR^d,
w(t)\mathrm{d}t\mathrm{d}x\right)$. Since Assumption
\ref{main as 3} holds, the calculation in the proof of Corollary
\ref{dense cor} shows that every $f_n$ belongs to $\mathrm{L}_{2,loc}\left((0,T)\times\bR^d,w(t)\mathrm{d}t\mathrm{d}x\right)\cap\cF^{-1}\mathrm{L}_{1,loc}\left((0,T)\times\bR^d\right)$ and satisfies \eqref{20260401 01}.
For every index $n$, Theorem \ref{strong solution thm} ensures we can construct a strong solution $u_n$, expressed as
$$
u_n(t,x) = \cF^{-1}\left[ \int_0^t \exp\left(\int_s^t\psi(r,\cdot)\mathrm{d}r \right) \cF[f_n(s,\cdot)]\mathrm{d}s \right](x).
$$
This sequence of solutions resides in the intersection space:
$$
\mathrm{L}_{\infty,2, loc}\left( (0,T) \times \bR^d ,\frac{w(t)}{t}\mathrm{d}t \mathrm{d}x\right) \cap \mathrm{L}_{2, loc}\left( (0,T) \times \bR^d,\frac{w(t)}{t^2}\mathrm{d}t\mathrm{d}x \right) \cap  \mathrm{H}_{1,2,loc}^{\psi}\left( (0,T) \times \bR^d\right).
$$
The additional $\phi$-regularity of $u_n$ follows from Lemma
\ref{l2 bounded lemma}, rather than from Theorem
\ref{continuity operator}. Indeed, Assumption \ref{main as 3} implies
Assumption \ref{main as 2}, while Assumption \ref{main as 5} includes
Assumption \ref{main as 4}. Hence, for every $T'\in(0,T)$,
\begin{align*}
\int_0^{T'}\left\|
\phi(-\mathrm{i}\nabla)u_n(t,\cdot)
\right\|_{\mathrm L_2(\bR^d)}^2w(t)\,\mathrm{d}t
\leq
N_w\int_0^{T'}\|f_n(t,\cdot)\|_{\mathrm L_2(\bR^d)}^2
w(t)\,\mathrm{d}t.
\end{align*}
Moreover, since $t^2\leq (T')^2$ on $(0,T')$, estimate
\eqref{20260321 04} gives
\begin{align*}
\int_0^{T'}\|u_n(t,\cdot)\|_{\mathrm L_2(\bR^d)}^2
w(t)\,\mathrm{d}t
&\leq
(T')^2\int_0^{T'}\|u_n(t,\cdot)\|_{\mathrm L_2(\bR^d)}^2
\frac{w(t)}{t^2}\,\mathrm{d}t\\
&\leq
4N_w(T')^2
\int_0^{T'}\|f_n(t,\cdot)\|_{\mathrm L_2(\bR^d)}^2
w(t)\,\mathrm{d}t.
\end{align*}
Consequently, $u_n\in
\mathrm{H}_{2,loc}^{\phi}
\left((0,T)\times\bR^d,w(t)\mathrm{d}t\mathrm{d}x\right)$.
Theorem \ref{continuity operator} may now be applied in its stated direction. 
It follows that $u_n\in\mathrm{H}_{1,2,loc}^{\psi}\left((0,T)\times\bR^d\right)$.

\vspace{2mm}
{\bf III. Convergence to the True Solution}
\vspace{2mm}

Fix an arbitrary $T'\in(0,T)$ and use the abbreviation
$$
\|g\|_{2,w;T'}^2
:=
\int_0^{T'}\|g(t,\cdot)\|_{\mathrm L_2(\bR^d)}^2
w(t)\,\mathrm{d}t.
$$
Also set
$$
A_{T'}
:=
\left(
\int_0^{T'}
\esssup_{\xi\in\bR^d}
\left|\frac{\psi(t,\xi)}{\phi(\xi)}\right|^2
\frac{1}{w(t)}\,\mathrm{d}t
\right)^{1/2},
$$
which is finite by Assumption \ref{main as 5}. For $n,m\in\bN$, put
$$
h_{n,m}:=f_n-f_m,
\qquad
z_{n,m}:=u_n-u_m.
$$
By linearity of the representation formula, $z_{n,m}$ is the function
given by \eqref{u defn} with $h_{n,m}$ in place of $f$. Therefore,
Corollary \ref{l2 bounded corollary 1} and Lemma
\ref{l2 bounded lemma} give
\begin{align*}
\int_0^{T'}\|z_{n,m}(t,\cdot)\|_{\mathrm L_2}^2
\frac{w(t)}{t^2}\,\mathrm{d}t
&\leq
4N_w\|h_{n,m}\|_{2,w;T'}^2,\\
\esssup_{0<t<T'}
\left(
\|z_{n,m}(t,\cdot)\|_{\mathrm L_2}^2
\frac{w(t)}{t}
\right)
&\leq
N_w\|h_{n,m}\|_{2,w;T'}^2,\\
\int_0^{T'}
\left\|\phi(-\mathrm{i}\nabla)z_{n,m}(t,\cdot)\right\|_{\mathrm L_2}^2
w(t)\,\mathrm{d}t
&\leq
N_w\|h_{n,m}\|_{2,w;T'}^2.
\end{align*}
In particular,
\begin{align*}
&\int_0^{T'}
\left(
\|z_{n,m}(t,\cdot)\|_{\mathrm L_2}^2
+
\left\|\phi(-\mathrm{i}\nabla)z_{n,m}(t,\cdot)\right\|_{\mathrm L_2}^2
\right)w(t)\,\mathrm{d}t\leq
N_w\bigl(1+4(T')^2\bigr)\|h_{n,m}\|_{2,w;T'}^2.
\end{align*}
Theorem \ref{continuity operator}, applied in the forward direction,
also yields
\begin{align*}
\int_0^{T'}
\left\|
\psi(t,-\mathrm{i}\nabla)z_{n,m}(t,\cdot)
\right\|_{\mathrm L_2}\,\mathrm{d}t
\leq
A_{T'}
\left(
\int_0^{T'}
\left\|\phi(-\mathrm{i}\nabla)z_{n,m}(t,\cdot)\right\|_{\mathrm L_2}^2
w(t)\,\mathrm{d}t
\right)^{1/2}\leq
A_{T'}\sqrt{N_w}\,
\|h_{n,m}\|_{2,w;T'}.
\end{align*}

Since $f_n\to f$ in the weighted local $\mathrm L_2$-space, these
estimates show that $\{u_n\}$ is Cauchy, on every $(0,T')$, in each of
the three spaces appearing in the statement of the theorem. The
fixed-interval $\phi$-potential space is complete by the remark
following Definition \ref{phi space}, and the other two fixed-interval
weighted spaces are complete as well.

We next verify that the local limits determine one function. Choose
$\tau\in(T',T)$ such that $0<w(\tau)<\infty$ and the quasi-decreasing
inequality holds for almost every $s<\tau$, and put
$$
c_{T'}:=\frac{w(\tau)}{N_w}>0.
$$
Then $w(s)\geq c_{T'}$ for almost every $s\in(0,T')$. Thus convergence
in any of the above fixed-interval spaces implies convergence locally
in the unweighted $\mathrm L_2$-space. Taking an increasing countable
sequence $T_k\uparrow T$, uniqueness of the unweighted limits on
overlapping intervals allows the fixed-interval limits to be patched
into a single function
$$
\begin{aligned}
u\in{}&
\mathrm{L}_{\infty,2,loc}
\left((0,T)\times\bR^d,\frac{w(t)}{t}\mathrm{d}t\mathrm{d}x\right)
\cap
\mathrm{L}_{2,loc}
\left((0,T)\times\bR^d,\frac{w(t)}{t^2}\mathrm{d}t\mathrm{d}x\right)\cap
\mathrm{H}_{2,loc}^{\phi}
\left((0,T)\times\bR^d,w(t)\mathrm{d}t\mathrm{d}x\right).
\end{aligned}
$$
By Theorem \ref{continuity operator},
$$
\psi(\cdot,-\mathrm{i}\nabla)u_n
\longrightarrow
\psi(\cdot,-\mathrm{i}\nabla)u
\quad\text{in }\mathrm L_1\left((0,T');\mathrm L_2(\bR^d)\right).
$$

We now pass to the limit in the strong equation. The lower bound
$w\geq c_{T'}$ gives
$$
\|f_n-f\|_{\mathrm L_1((0,T');\mathrm L_2)}
\leq
\left(\frac{T'}{c_{T'}}\right)^{1/2}
\|f_n-f\|_{2,w;T'}
\longrightarrow0,
$$
and convergence in the $\phi$-potential space similarly implies
$$
u_n\longrightarrow u
\quad\text{in }\mathrm L_1\left((0,T');\mathrm L_2(\bR^d)\right).
$$
Define
$$
G_n(t):=\psi(t,-\mathrm{i}\nabla)u_n(t)+f_n(t),
\qquad
G(t):=\psi(t,-\mathrm{i}\nabla)u(t)+f(t).
$$
The preceding convergences show that
$$
G_n\longrightarrow G
\quad\text{in }\mathrm L_1\left((0,T');\mathrm L_2(\bR^d)\right).
$$
Since $u_n$ is a strong solution,
$$
u_n(t)=\int_0^tG_n(s)\,\mathrm{d}s
\quad\text{in }\mathrm L_2(\bR^d)
\quad\text{for almost every }t\in(0,T').
$$
Moreover,
$$
\sup_{0\leq t\leq T'}
\left\|
\int_0^t\bigl(G_n(s)-G(s)\bigr)\,\mathrm{d}s
\right\|_{\mathrm L_2}
\leq
\|G_n-G\|_{\mathrm L_1((0,T');\mathrm L_2)}
\longrightarrow0.
$$
Combining this uniform convergence with
$u_n\to u$ in $\mathrm L_1((0,T');\mathrm L_2)$ gives
$$
u(t)
=
\int_0^t
\left(
\psi(s,-\mathrm{i}\nabla)u(s)+f(s)
\right)\,\mathrm{d}s
\quad\text{in }\mathrm L_2(\bR^d)
$$
for almost every $t\in(0,T')$. Since $T'<T$ was arbitrary, this is the
$\mathrm L_2$-valued Bochner form of \eqref{20230213 30}, and hence
$u$ is a strong solution in the sense of Definition
\ref{strong solution}.

Finally, let $\widetilde u$ denote the function defined by the right-hand
side of \eqref{unique solution}. Applying \eqref{20260321 04} to the
representation formula with $f_n-f$ in place of $f$ gives
$$
\int_0^{T'}
\|u_n(t,\cdot)-\widetilde u(t,\cdot)\|_{\mathrm L_2}^2
\frac{w(t)}{t^2}\,\mathrm{d}t
\leq
4N_w\|f_n-f\|_{2,w;T'}^2
\longrightarrow0.
$$
The uniqueness of the limit in this space therefore gives
$u=\widetilde u$ almost everywhere, proving the representation
\eqref{unique solution}. Passing to the limit in the preceding
estimates gives \eqref{a priori est 2-2},
\eqref{a priori est 2-3}, and \eqref{a priori est}; estimate
\eqref{a priori est 00} then follows from Theorem
\ref{continuity operator}.

\end{proof}

\bibliographystyle{plain}

\end{document}